\documentclass[11pt,a4paper,reqno]{amsart}
\usepackage[T1]{fontenc}
\usepackage{lmodern,amsmath,amssymb,amsthm,mathtools}
\usepackage[margin=29mm]{geometry}
\usepackage[expansion=false]{microtype}
\usepackage{needspace}
\usepackage{tikz}
\usepackage[hidelinks]{hyperref}
\hypersetup{pdftitle={Absolutely minimising Lipschitz extensions between Hilbert spaces},pdfauthor={Krzysztof J. Ciosmak}}
\newtheorem{theorem}{Theorem}[section]
\newtheorem{lemma}[theorem]{Lemma}
\newtheorem{proposition}[theorem]{Proposition}
\newtheorem{corollary}[theorem]{Corollary}
\theoremstyle{remark}
\newtheorem{remark}[theorem]{Remark}
\newtheorem{example}[theorem]{Example}
\DeclareMathOperator{\Lip}{Lip}
\DeclareMathOperator{\dist}{dist}
\DeclareMathOperator{\conv}{Conv}
\DeclareMathOperator{\Span}{Span}
\newcommand{\Id}{\mathrm{Id}}
\DeclareMathOperator{\interior}{int}
\newcommand{\R}{\mathbb R}
\DeclareMathOperator{\cl}{cl}
\numberwithin{equation}{section}
\newcommand{\doi}[1]{\href{https://doi.org/#1}{\nolinkurl{#1}}}
\allowdisplaybreaks[1]

\title[Absolutely minimising extensions]{Absolutely minimising Lipschitz extensions between Hilbert spaces}
\author{Krzysztof J. Ciosmak}
\address{Beijing Institute of Mathematical Sciences and Applications,
No. 544, Hefangkou Village, Huaibei Town, Huairou District,
Beijing 101408, China}
\email{kciosmak@bimsa.cn}
\subjclass[2020]{Primary 54C20, 46C05, 47H09; Secondary 49J27, 26B35}
\keywords{Absolutely minimising Lipschitz extension, Hilbert space, Kirszbraun theorem, piecewise affine isometry, leaves of $1$-Lipschitz maps}
\date{}
\begin{document}

\begin{abstract}
Every Lipschitz map from a closed subset of a real Hilbert space
into a real Hilbert space has an absolutely minimising Lipschitz
extension: an extension whose Lipschitz constant on every open
subset of the complement, bounded or not, equals the Lipschitz
constant of its restriction to the boundary of that subset.
Neither space need be separable. This answers a question of Naor
and Sheffield. For targets of dimension at least two we prove a
density theorem on connected domains: every optimal extension of
a boundary datum can be approximated, uniformly on compact sets,
by extensions whose Lipschitz constant on every nonempty open
subset, and on its boundary, equals the constant of the datum.
Consequently, in contrast with the scalar case, such a datum has
a unique absolutely minimising extension exactly when its optimal
extension is unique, and absolutely minimising extensions need
not be closed under uniform convergence, even for fixed smooth
data on a planar disk. The extensions are built by finite
piecewise isometric interpolation: on nested polyhedra with dense
union they are piecewise affine and preserve distances in at
least two directions on each simplex, a structure described
through leaves of $1$-Lipschitz maps. The boundary constraints
are handled by a midpoint argument for admissible values, and a
Kuratowski--Zorn argument removes the separability assumption
while preserving previously prescribed values. We also
characterise the bounded linear maps that are absolutely
minimising extensions of all data determining their operator
norm.
\end{abstract}
\maketitle

\section{Introduction}

The Kirszbraun--Valentine theorem extends a Lipschitz map between
real Hilbert spaces without increasing its Lipschitz constant
\cite{Kirszbraun,Valentine}. Absolute minimality imposes the same
requirement within every open region: the Lipschitz constant there
must be determined by the values on its boundary. We prove that
every Lipschitz datum on a closed subset of a Hilbert space admits
such an extension, with no separability or boundedness assumption.
When the target has dimension at least two, we prove more: the
extensions whose Lipschitz constant is attained on every open
subset of the domain are dense among all optimal extensions.
Bounded real-valued data have a unique absolutely minimising
extension; for targets of dimension at least two it is unique
exactly when the optimal extension is.

All Hilbert spaces below are real, with their norm distances.
For a map $w$ defined on a set containing $B$, write
\begin{equation*}
 \Lip(w,B)=\sup\left\{
 \frac{\|w(x)-w(y)\|}{\|x-y\|}\mid x,y\in B,\ x\ne y\right\},
\end{equation*}
with value zero when $B$ has at most one point. Let $A$ be closed
in a Hilbert space $X$, and let $u\colon A\to Y$ be Lipschitz.
An extension $\widetilde u\colon X\to Y$ is \emph{optimal} if
$\Lip(\widetilde u,X)=\Lip(u,A)$. It is an
\emph{absolutely minimising Lipschitz extension}, or AMLE, if it
is optimal and
\begin{equation}\label{eq:amle}
 \Lip(\widetilde u,V)=\Lip(\widetilde u,\partial V)
 \text{ for every nonempty open }V\subset X\setminus A.
\end{equation}
For data on the boundary of an open set $\Omega$, an AMLE is
defined on $\cl\Omega$ and satisfies this comparison for every
nonempty open $V\subset\Omega$. Boundaries are taken in the
source space unless a subspace is specified.

The comparison sets may be disconnected or unbounded. Restricting
them to relatively compact open sets would make the definition
vacuous in infinite-dimensional Hilbert spaces. Even in finite
dimensions, allowing unbounded sets imposes additional conditions,
as the scalar maximum principle and linear examples below show.

\begin{theorem}[Existence]\label{thm:closed}
Let $X,Y$ be real Hilbert spaces, let $A\subset X$ be closed,
and let $u\colon A\to Y$ be Lipschitz. Then $u$ has an AMLE
$\widetilde u\colon X\to Y$.
\end{theorem}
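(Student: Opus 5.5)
We build $\widetilde u$ by finite piecewise isometric interpolation, as the abstract indicates. After scaling we may assume $\Lip(u,A)=1$; the case $\Lip(u,A)=0$ is trivial, since the constant extension works. We first treat the case where $X$ is separable. Fix an increasing sequence of finite-dimensional polyhedra $P_1\subset P_2\subset\cdots$ in $X\setminus A$ whose union $D$ is dense in $X\setminus A$. Each $P_k$ carries a triangulation refining the previous one, with mesh tending to zero relative to $\dist(\cdot,A)$.

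The aim is to define $\widetilde u$ on $D$ so that three properties hold. First, $\widetilde u$ is $1$-Lipschitz on $A\cup D$, via Kirszbraun--Valentine applied at each stage. Second, on each simplex of $P_k$ it is affine. Third, on each simplex it preserves distances in at least one direction, and in at least two directions when $\dim Y\ge 2$. We then extend by continuity to $X$.

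The key mechanism is local saturation. If every point of an open $V$ lies in the interior of a segment along which $\widetilde u$ is an isometry, and such segments can be continued until they reach $\partial V$, then $\Lip(\widetilde u,V)=\Lip(\widetilde u,\partial V)$. This would follow from a leaf structure of $1$-Lipschitz maps: the maximal sets on which $\widetilde u$ is an isometry are convex, so they exit $V$ through $\partial V$.

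Here is the order of steps.
\begin{enumerate}
\item At stage $k$, extend $\widetilde u$ from $A\cup P_{k-1}$ to the new vertices, using Kirszbraun's theorem to keep the map $1$-Lipschitz.
\item Interpolate affinely on the simplices.
\item Modify the vertex values by a midpoint argument, so that every new simplex contains an isometric direction. The idea is to choose each vertex value as the midpoint of the images of two points that are already fixed at distance ratio one. For this we need the set of admissible values, meaning those compatible with the $1$-Lipschitz constraint on $A\cup P_{k-1}$, to be a nonempty convex set containing such midpoints.
\item Pass to the limit.
\end{enumerate}

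This construction only gives the global constant $1$. Absolute minimality for a set $V$ with $\Lip(\widetilde u,\partial V)=L<1$ requires a localized version. So we run the construction with respect to a countable family of rational levels and a countable base of regions, and for each region make $\widetilde u$ locally AMLE-like. Concretely, we define $\widetilde u$ as a limit of extensions $u_n$, each of which is optimal for the data on $\partial V_n$ for a countable dense family of open sets $V_n$. Later stages must not destroy earlier constraints; we arrange this by fixing values on compact pieces and using uniform convergence on compacts.

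For nonseparable $X$, we use a Kuratowski--Zorn argument. Consider partial extensions defined on closed sets of the form $A\cup Z$ that satisfy the AMLE property relative to their domain, ordered by extension. Chains have upper bounds by taking the closure of the union. For a maximal element, if its domain is not all of $X$, we add a separable subspace. The separable case applied to the data on $A\cup Z$ then extends the map while preserving previously prescribed values, which contradicts maximality.

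The main obstacle is handling unbounded and disconnected open sets $V$ whose boundary constant is strictly smaller than the global one. For these the midpoint admissibility step must produce isometric segments of slope exactly $\Lip(\widetilde u,\partial V)$ reaching $\partial V$. I would first try to prove this for $V$ a union of simplices of some $P_k$, and then approximate general $V$ from inside.
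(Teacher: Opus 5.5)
Your proposal has genuine gaps, and the most serious is the saturation mechanism. Convexity of leaves does not make them reach $\partial V$. A leaf is closed and convex but may be a bounded set inside $V$, and in a piecewise affine construction the leaves are typically single simplices. Already on $\R$, the sawtooth $x\mapsto\dist(x,\tfrac15\mathbb Z)$ has slope $\pm1$ off a discrete set, yet on $V=(0,1)$ its boundary constant is $0$.

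The missing idea is topological nonseparation (Lemmas~\ref{lem:fibre-nonseparation} and~\ref{lem:nonseparation}). Suppose the linear part on each simplex is isometric on a \emph{two}-dimensional subspace and $\Lip(\widetilde u,\partial V)<1$. Then $\partial V$ meets each plane parallel to that subspace in at most one point of the simplex, so it cannot disconnect the interior of the polyhedron. Nested polyhedra with dense union then force $V$ to be dense in a whole component $U$ of $\Omega\setminus D(u,A)$.

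Your plan never confronts the degeneracy set $D(u,A)$, where the admissible values have empty interior. Kirszbraun's theorem alone gives no isometric pieces, and Brehm's interpolation on a finite-dimensional section ignores $A$. What absorbs the interpolation error is a positive uniform margin against $u$, which exists only off $D(u,A)$ (Lemma~\ref{lem:addition}); when $D(u,A)\ne X$, the values on it are forced (Lemma~\ref{lem:forced-degeneracy}). So polyhedra exhausting $X\setminus A$ cannot be fixed in advance. To pass from $U$ to all of $\Omega$ you also need the contact lemma, which compares the linear margin from strict contraction with the quadratic reflection bound (Lemmas~\ref{lem:quadratic} and~\ref{lem:contact}).

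Your midpoint step does not create an isometric direction in a new simplex, and modifying vertex values undoes earlier stages. In the paper, Brehm's theorem supplies the isometric pieces, and affine rigidity (Lemma~\ref{lem:leaves}) makes fixed vertex values determine each simplex permanently, so no limit is ever taken. Finally, one isometric direction cannot work for scalar targets. The unique AMLE $\arg z$ of its boundary datum on $B_1(2)$ has Lipschitz constant at most $4/7$ on $B_{1/4}(2)$ (Remark~\ref{rem:dichotomy}). An extension with unit slope on a dense family of simplices is therefore never absolutely minimising there; the paper treats $\dim Y=1$ separately by Milman's theorem.

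The second gap is your handling of comparison sets whose boundary constant lies below the global one. There are uncountably many such $V$, and comparison on a countable family $V_n$ does not transfer to the rest; inner approximation does not control $\Lip(\widetilde u,\partial V)$. Absolute minimality also does not survive uniform limits: Example~\ref{ex:uniform-nonclosure} gives AMLEs of a fixed smooth datum converging uniformly to a non-AMLE. Moreover, normalising the global constant to one and forcing unit isometric directions throughout $X\setminus A$ violates comparison on $V=\Omega_i$ for every component with $L_i=\Lip(u,\partial\Omega_i)<1$.

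The missing reduction is componentwise. Build on each $\Omega_i$ an extension \emph{saturated} with constant $L_i$, so that no open $V\subset\Omega_i$ has boundary constant below $L_i$ at all (Theorem~\ref{thm:domain}). Paste these with Lemma~\ref{lem:pasting}(i), and for arbitrary $V$ use $\partial(V\cap\Omega_i)\subset\partial V$ together with Lemma~\ref{lem:pasting}(ii).

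Saturation is also what your nonseparable step lacks. Applying the separable case to a separable section ignores constraints from points of $A$ outside it; this is why the paper uses compatible sections satisfying \eqref{eq:reflection}. And if you fill $\Omega_F\setminus E$ by an AMLE, comparison on an open set straddling $E$ needs control of pairs between $V\cap E$ and $\partial V$, which neither comparison property provides. With saturation, the paper instead adds two directions orthogonal to $E$. Then $\Omega_F\setminus E$ is connected and its boundary datum already has constant one (Lemma~\ref{lem:saturated-subspace}).
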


There are no boundedness or separability assumptions. This answers
Question~2 of Naor and Sheffield \cite[Section~1.4]{NaorSheffield}
on an absolutely minimising version of Kirszbraun's theorem.
The scalar case follows from Milman's theorem \cite{Milman}; our
construction treats targets of dimension at least two.

In Hilbert spaces, \eqref{eq:amle} is equivalent to the competitor
formulation used in \cite{NaorSheffield}:
\begin{equation*}
 \Lip(\widetilde u,V)\leq\Lip(v,V)
 \text{ whenever }v\colon X\to Y\text{ is Lipschitz and }
 v=\widetilde u\text{ on }X\setminus V.
\end{equation*}
Competitors agree on $\partial V$ by continuity. Conversely,
Kirszbraun's theorem extends the boundary values with their own
Lipschitz constant, and Lemma~\ref{lem:pasting} pastes that
extension into $V$ without increasing the global bound.

Our second theorem gives a stronger form of absolute minimality.
An $L$-Lipschitz map $\widetilde u\colon\cl\Omega\to Y$ is
\emph{saturated with constant $L$ on $\Omega$} if
\begin{equation}\label{eq:saturated}
 \Lip(\widetilde u,V)=\Lip(\widetilde u,\partial V)=L
 \text{ for every nonempty open }V\subset\Omega.
\end{equation}
Thus a saturated optimal extension is an AMLE, and its Lipschitz
constant cannot be reduced by restricting to any nonempty open set.

\begin{theorem}[Density of saturated extensions]\label{thm:domain}
Let $X,Y$ be real Hilbert spaces with $\dim Y\geq2$.
Let $\Omega\subsetneq X$ be nonempty, connected and open,
let $u\colon\partial\Omega\to Y$ be Lipschitz, and put
$L=\Lip(u,\partial\Omega)$. Given an $L$-Lipschitz extension
$v_0\colon\cl\Omega\to Y$ of $u$, a nonempty compact set
$Q\subset\cl\Omega$, and $\varepsilon>0$, there is an
$L$-Lipschitz extension $\widetilde u\colon\cl\Omega\to Y$
of $u$, saturated with constant $L$ on $\Omega$, such that
\begin{equation*}
 \sup\{\|\widetilde u(x)-v_0(x)\|\mid x\in Q\}<\varepsilon.
\end{equation*}
\end{theorem}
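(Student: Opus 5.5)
We will modify $v_0$ only in a small neighbourhood of $Q$ and only in a controlled way. The building block is \emph{finite piecewise isometric interpolation}. Fix a nested sequence of polyhedral sets $P_1\subset P_2\subset\cdots\subset\Omega$ whose union is dense in $\Omega$ and which are triangulated compatibly, with mesh tending to zero. On each $P_k$ we build a piecewise affine $L$-Lipschitz map with two properties: each simplex is mapped by an affine map which is $L$ times an isometry in at least two directions, and it is compatible with the previously prescribed values.

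The condition $\dim Y\ge2$ supplies the room for this. Given two points whose images must be at distance close to $L$ times their distance, we can \emph{fold} a segment into a planar zigzag in $Y$. On small scales this zigzag has slope exactly $L$ along two directions, but it stays uniformly close to the prescribed affine interpolation, with error of order the mesh. Thus on each simplex we replace the affine approximation of $v_0$ by a finer piecewise affine map that is $L$-isometric in two directions per cell and within $\varepsilon$ of $v_0$ on $Q$. Existence of such a map uses Kirszbraun's theorem to extend the prescribed vertex values with constant $L$. It also uses a midpoint argument: a value at a new vertex is \emph{admissible} if it is within $L\|x-y\|$ of all fixed values $u(y)$, $y\in\partial\Omega$, and of all previously chosen vertex values. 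The midpoint of two admissible values is again admissible, by convexity of balls in Hilbert space, and this lets us choose values close to $v_0$ while keeping $L$-Lipschitz control against the boundary datum.

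Saturation then follows. Every nonempty open $V\subset\Omega$ contains a full simplex of some $P_k$, on which $\widetilde u$ stretches some segment by exactly $L$, so $\Lip(\widetilde u,V)=L$. For $\Lip(\widetilde u,\partial V)=L$ we use the leaf structure. A segment on which $\widetilde u$ is $L$-isometric extends, through the piecewise isometric construction, to a maximal leaf of the $1$-Lipschitz map $\widetilde u/L$. Such a leaf is a convex set on which $\widetilde u/L$ is an affine isometry, and we arrange that it is not contained in $V$. Connectedness of $\Omega$ lets us route the leaves so that they exit every open set: a leaf through a point of $V$ either meets $\partial\Omega$ or runs to infinity, and in either case it crosses $\partial V$ at two points, which carry ratio exactly $L$. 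In the unbounded case we arrange that two crossing points exist, or else take a limiting ratio. On the rest of $\cl\Omega$ we pass to the limit $k\to\infty$ by Arzel\`a--Ascoli on separable pieces. In the nonseparable case we instead use a Kuratowski--Zorn argument over partial extensions defined on closed subsets that contain $\partial\Omega$ and preserve previously prescribed values. The uniform $L$-bound and exact stretching along leaves are closed conditions, so they survive the limit.

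The main obstacle is this last point: making leaves genuinely reach $\partial V$ for \emph{every} open $V$, including $V$ that are unbounded or disconnected, while simultaneously keeping the boundary values $u$ and the $\varepsilon$-closeness on $Q$. The approach I would try first is to make the leaves on each simplex segments that are extended, via connectedness and a chain of adjacent simplices, until they hit $\partial\Omega$. Near $\partial\Omega$ I would glue them to the datum using Lemma~\ref{lem:pasting}, with an admissible-value correction there.
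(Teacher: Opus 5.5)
Your proposal does not prove saturation, and the leaf mechanism you suggest cannot work. Leaves of a piecewise isometric map are controlled only simplex by simplex: adjacent simplices carry different affine isometries, so a chain of simplices is not a leaf, and nothing in the construction makes leaves long. Moreover, exact ratios on $\partial V$ need not exist. For $u(z)=\tfrac12 z^2$ on the unit circle, $|u(z)-u(w)|/|z-w|=|z+w|/2<1=L$ for distinct boundary points, so no leaf of any optimal extension contains two boundary points. Hence for $V=\Omega$ your claim that a leaf through a point of $V$ crosses $\partial V$ at two points is false. Since $\Lip(\widetilde u,\partial V)$ is a supremum that need not be attained, the paper argues by contradiction instead. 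Suppose $k=\Lip(\widetilde u,\partial V)<L$. Kirszbraun's theorem and pasting give a global $L$-Lipschitz extension $w$, equal to $\widetilde u$ outside $V$ and $k$-Lipschitz on $V$ (Lemma~\ref{lem:paste}). On a simplex whose linear part is isometric on a plane $W$, the set $\partial V$ meets each plane parallel to $W$ in at most one point, and a point does not disconnect a planar convex set. Hence $\partial V$ cannot disconnect the interior of any of the nested polyhedra (Lemma~\ref{lem:nonseparation}). So $V$ is dense in a whole component $U$ of $\Omega\setminus D(u,\partial\Omega)$, and $w$ is strictly contracting there. Strict contraction gives a linear lower bound on the margin of $w$, while a reflection-midpoint estimate gives a quadratic upper bound near degenerate points (Lemmas~\ref{lem:quadratic} and~\ref{lem:contact}). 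Together they show that $U$ has no boundary point in $\Omega$. Thus $U=\Omega$ and $w$ is $k$-Lipschitz up to $\partial\Omega$, contradicting $\Lip(u,\partial\Omega)=L$. This is where two isometric directions, and so $\dim Y\geq2$, are used; none of it appears in your argument.

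The construction also fails as stated. Two isometric directions on every simplex of polyhedra dense in $\Omega$ cannot be arranged in general. Take $\Omega=\{0<x_2<1\}\subset\R^2$ with $u=(0,0)$ on $\{x_2=0\}$ and $u=(1,0)$ on $\{x_2=1\}$. Then $L=1$, and the triangle inequality forces the unique extension $(x_1,x_2)\mapsto(x_2,0)$, which has rank one (it is nonetheless saturated). The missing notion is the degeneracy set $D(u,\partial\Omega)$, where the admissible-value set has empty interior. Values there are forced when it is proper (Lemma~\ref{lem:forced-degeneracy}); this reflection-midpoint estimate, not mere convexity, is what handles the boundary constraints. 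Comparison sets with strict boundary constant avoid $D$, so structure is needed only off it. Even off $D$, convexity of admissible sets does not let you prescribe exact isometric pieces. You need a positive margin that is uniform over the whole, possibly noncompact, boundary. The paper obtains it by a common small translation of the new values followed by averaging (Lemma~\ref{lem:addition}), and only then applies Brehm's theorem, through an enlarged target and a projection when $\dim Y<\dim X$. Drop the limit passage as well. Arzel\`a--Ascoli is unavailable for infinite-dimensional $Y$, and saturation is not stable under uniform limits (compare Example~\ref{ex:uniform-nonclosure}). Since your values are frozen, extend their union once by Kirszbraun's theorem. Finally, your Kuratowski--Zorn step as stated gives no way to enlarge a saturated partial map. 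That needs the compatible sections of Lemma~\ref{lem:compatible}, whose boundary points impose all the original constraints, and the codimension-two enlargement of Lemma~\ref{lem:saturated-subspace}.
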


Saturated extensions are therefore dense among optimal extensions
for uniform convergence on compact sets. If $X$ is
finite-dimensional and $\Omega$ is bounded, the approximation is
uniform on $\cl\Omega$. In a one-dimensional source the theorem
reduces to affine interpolation on an interval or a constant
extension on a ray; the construction is needed when $\dim X\geq2$.

The density theorem has two immediate consequences. On a connected
domain, an AMLE is unique exactly when the optimal extension is
unique (Corollary~\ref{cor:uniqueness}). Moreover, AMLEs need not
be closed under uniform convergence: Example~\ref{ex:uniform-nonclosure}
gives fixed smooth boundary data on a planar disk and AMLEs
converging uniformly to an optimal extension that is not absolutely
minimising. For an arbitrary closed prescribed set, the closure
is described by optimality on each complementary component
(Corollary~\ref{cor:density}).

For bounded scalar Lipschitz data on a nonempty prescribed set,
the AMLE is unique even when the optimal extension is not, by Peres, Schramm,
Sheffield and Wilson \cite{PSSW}. Hence the density theorem fails
for one-dimensional targets (Remark~\ref{rem:dichotomy}). Scalar
AMLEs also obey a maximum principle with comparison over all open
sets, whereas vector-valued AMLEs need not remain in the closed
convex hull of the prescribed values
(Proposition~\ref{prop:maximum}).

The role of unbounded comparison sets is particularly explicit
for linear maps. Every bounded linear map satisfies comparison on
bounded open sets. A nonzero bounded linear map $T$ is an AMLE for
every closed datum that determines its operator norm precisely
when, for every $\varepsilon>0$, some two-dimensional subspace
satisfies $\|Te\|\geq(\|T\|-\varepsilon)\|e\|$ for all its
vectors (Proposition~\ref{prop:linear}). This includes maps that
strictly decrease every nonzero distance; one such AMLE preserves
no distance at all: its leaves, the maximal sets on which it
preserves distances \cite{CiosmakLeaves}, are all singletons
(Example~\ref{ex:strict-linear}).

The scalar theory begins with Aronsson's existence theorem
\cite{Aronsson} and Jensen's uniqueness theorem \cite{Jensen}
on bounded Euclidean domains. Comparison with cones was developed
by Crandall, Evans and Gariepy \cite{CEG}; see \cite{ACJ} for a
survey. Milman \cite{Milman}, Juutinen \cite{Juutinen}, and Peres,
Schramm, Sheffield and Wilson \cite{PSSW} treat extensions on
metric or length spaces, with the bounded-data theorem of
\cite{PSSW} using the all-open-set comparison considered here.
For finite metric-tree targets, Naor and Sheffield proved existence and
uniqueness on locally compact length spaces \cite{NaorSheffield}.
For Euclidean targets, Sheffield and Smart exhibited nonuniqueness
and introduced the stronger notion of a tight extension
\cite{SheffieldSmart}. Related vector-valued $L^\infty$
variational problems and Aronsson systems are studied in
\cite{BarronJensenWang,Katzourakis}.

Hirn and Le Gruyer established approximate absolute minimality
in a general extension framework on a compact working domain
\cite{HirnLeGruyer}; Theorem~\ref{thm:domain} instead imposes
exact absolute minimality and approximates a given extension on
compact sets. Continuity and uniform-distance constraints for
Hilbert-space extensions are studied in
\cite{CiosmakContinuity,CiosmakMonotone,CiosmakUniform2026}; in
particular, \cite{CiosmakUniform2026} characterises the reference
maps for which an extension can preserve a prescribed
uniform-distance bound. Our construction uses geometry studied by
Kirchheim, Spadaro and Sz\'ekelyhidi \cite[Sections~2 and~4]{KSS}:
for compact data in equidimensional Euclidean spaces, they
describe the hull forced by isometric subsets, construct locally
strict extensions outside it, and prove genericity of extensions
preserving lengths of rectifiable curves outside that hull.

We outline the proof of Theorem~\ref{thm:domain} for separable
sources, normalising the boundary Lipschitz constant to one. The
construction of Section~\ref{sec:separable} prescribes values at
finitely many new points at a time and extends them by Brehm's
theorem \cite{Brehm}, never changing values already prescribed.
It produces nested finite-dimensional polyhedra, with dense union
in each component of the domain minus a closed exceptional set,
on whose simplices the extension is affine isometric. Retaining
the finitely many vertex values forces every later extension to
preserve each old simplex, so no convergence of the intermediate
maps is needed, and a finite net controls the approximation on
the prescribed compact set. When the target has fewer dimensions
than the source, we first enlarge the target and then project,
retaining two isometric directions on every full-dimensional
simplex. The simplices are convex subsets of leaves of the
auxiliary map; Remark~\ref{rem:structure} records the resulting
geometry.

The construction must respect all boundary constraints, including
those that leave no room for perturbation. At each source point,
the admissible extension values form an intersection of closed
balls; the \emph{degeneracy set} $D$ consists of the points where
this intersection has empty interior, and it is the exceptional
set above. If $D$ is proper, a reflection and convexity argument
shows that all values on $D$ are already forced by the original
datum (Lemma~\ref{lem:forced-degeneracy}), so the initial map is
retained there automatically. Outside $D$, finitely many new
values can be chosen with a positive uniform margin against that
datum, and a small common translation of these target values,
followed by convex averaging, recovers such a margin on each
compact region to be enlarged (Lemma~\ref{lem:addition}).

Saturation is proved by contradiction. Suppose a comparison set
$V$ has boundary Lipschitz constant less than one. Pasting a
strictly contracting extension into $V$ gives an extension that
is strictly contracting on $V$, and $V$ avoids $D$. On a simplex
whose linear part is isometric in two directions, the boundary of
$V$ meets each plane parallel to those directions in at most one
point, and removing a point does not disconnect a convex planar
region; projection onto the orthogonal complement, followed by
pasting across facets, shows that this boundary cannot separate a
polyhedron carrying such simplices
(Lemma~\ref{lem:nonseparation}). This is where the
target-dimension assumption enters. Since the polyhedra are
nested with dense union, $V$ contains a dense subset of an entire
component of the domain outside $D$, and the pasted extension is
strictly contracting on that component. Such a component cannot
touch $D$ inside the domain (Lemma~\ref{lem:contact}): reflecting
a point of a contracting ball through a nearby degenerate point
and averaging the two extension values, the strict contraction
supplies a linear margin while the parallelogram identity bounds
the error quadratically, so the average would be an interior
admissible value at the degenerate point. Thus the component is
the whole connected domain, contradicting the boundary Lipschitz
constant.

For nonseparable sources, we choose sections on which the visible
boundary data impose all the original constraints. A saturated map
on a section can be enlarged without changing its old values:
adding two directions orthogonal to the section keeps the new
region connected, and the theorem for smaller source density
fills it. Boundary inequalities and saturation pass to the closure
of a coherent union. The Kuratowski--Zorn lemma assembles a
maximal family of these maps, and a source-density bound permits
one more section to be added unless the family already covers a
basis of the whole source.

Sections~\ref{sec:admissible}--\ref{sec:hilbert-tangent}
establish the geometry of admissible values needed in
Section~\ref{sec:separable}, which proves the density theorem
for separable sources; Section~\ref{sec:nonseparable} extends it
to arbitrary Hilbert sources. Section~\ref{sec:existence} passes to
arbitrary prescribed sets. Sections~\ref{sec:consequences}
and~\ref{sec:linear} contain the uniqueness results, the
uniform-limit example, and the linear characterisations.

\section{Preliminaries}\label{sec:preliminaries}

We use Kirszbraun's theorem to extend compatible data and Brehm's
theorem to create isometric pieces. Two elementary facts connect
these operations: extensions can be pasted across matching
boundaries, and distance-preserving data determine all values on
their convex hull.

\subsection{Notation and geometric conventions}
Hilbert spaces are real unless stated otherwise. We write
$\|\cdot\|$ for both the Hilbert norm and the operator norm,
$\langle\cdot,\cdot\rangle$ for the inner product, and $P_E$
for orthogonal projection onto a closed linear subspace $E$.
The symbols $\conv$, $\operatorname{Aff}$ and
$\Span$ denote convex, affine and linear hulls;
$\cl$ denotes norm closure. The inclusion $S\subset T$ allows
equality, whereas $S\subsetneq T$ does not. A subset of $X$ is \emph{proper} if it is not all of $X$. Hilbert dimension is
the cardinality of an orthonormal basis, and $\operatorname{dens}X$
is the least cardinality of a norm-dense subset of $X$.

The open ball of radius $r$ about $x$ in $X$ is $B_r^X(x)$;
we omit $X$ when it is clear. Subscripts on
$\operatorname{int}_E$, $\partial_E$ and $\cl_E$ indicate that
interior, boundary and closure are taken in the specified affine
subspace $E$. This distinction matters for finite-dimensional
polyhedra in an infinite-dimensional source. All other boundaries
and interiors are taken in the ambient Hilbert space. Components
are connected components; those of an open set are open and
polygonally connected.

A map $w$ on an
open set $O\subset X$ is \emph{locally strictly $1$-Lipschitz} if, for
every $x\in O$, there are $r>0$ and $0\leq k<1$ such that
$B_r^X(x)\subset O$ and $\Lip(w,B_r^X(x))\leq k$.
The numbers $r$ and $k$ may depend on $x$.

A \emph{polytope} is the convex hull of finitely many points;
a \emph{compact polyhedral set} is a finite union of polytopes
and need not be convex. Triangulations consist of closed simplices
meeting along common faces. If the triangulated set is the closure
of its interior in $E$, its full-dimensional simplices cover it and
every other simplex is a face of one of them. A map is \emph{piecewise affine} if
it is continuous and affine on every simplex of a triangulation.
A simplex in $E$ is \emph{full-dimensional} if its dimension is
$\dim E$. A triangulation is \emph{locally finite} if every point
has a neighbourhood meeting only finitely many simplices.

An \emph{affine isometry} is a map $x\mapsto a+Tx$ whose linear
part $T$ is an isometric embedding; surjectivity is not required.
A bounded linear map $T\colon E\to F$ is a \emph{coisometry}
if $TT^*=\Id_F$. Equivalently, its restriction to
$(\ker T)^\perp$ is an isometric isomorphism onto $F$.
When $F$ is a subspace of a larger target, we specify that $T$
is a coisometry \emph{onto $F$}. Statements about the linear part
of an affine map use these conventions.

\subsection{Two extension theorems}
We use the following classical Hilbert-space extension theorem;
see \cite{Kirszbraun,Valentine} and
\cite[Theorem~1.2]{AzagraLeGruyerMudarra}.

\begin{theorem}[Kirszbraun--Valentine]\label{thm:kirszbraun}
Let $X,Y$ be real Hilbert spaces and let $A\subset X$.
Every Lipschitz map $u\colon A\to Y$ has an extension
$\widetilde u\colon X\to Y$ satisfying
\begin{equation*}
 \Lip(\widetilde u,X)=\Lip(u,A).
\end{equation*}
\end{theorem}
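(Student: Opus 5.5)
The plan is the classical one: reduce to a single new point, then pass to all of $X$ by Zorn's lemma. First normalise. If $\Lip(u,A)=0$ then $u$ is constant on $A$ and the constant extension works. Otherwise divide $u$ by $L=\Lip(u,A)$, so that $u$ is $1$-Lipschitz. The statement is about an arbitrary $A\subset X$, with no closedness assumed. A $1$-Lipschitz map extends uniquely to $\cl A$ when $Y$ is complete, so nothing is lost by working with arbitrary sets.

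The core is the one-point extension. Let $A$ and $u$ be given with $u$ $1$-Lipschitz, and let $x\in X\setminus A$. We need a point $y\in Y$ with $y\in\bigcap_{a\in A}\cl B_{\|x-a\|}(u(a))$. The closed balls are weakly compact in a Hilbert space, so it suffices to treat finite families $a_1,\dots,a_n$.

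For finitely many points, consider the function
\begin{equation*}
 y\mapsto\max_i \frac{\|y-u(a_i)\|}{\|x-a_i\|}.
\end{equation*}
It attains its minimum $\lambda$ at some $y_0\in\conv\{u(a_i)\}$. Suppose $\lambda>1$, and let $I$ be the set of indices where the maximum is attained. By minimality, $y_0\in\conv\{u(a_i)\mid i\in I\}$: otherwise one could move $y_0$ towards that hull and decrease all active ratios. Write $y_0=\sum_{i\in I}\mu_iu(a_i)$ with $\mu_i\ge0$ and $\sum\mu_i=1$.

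Now expand $0=\|\sum\mu_i(u(a_i)-y_0)\|^2$ via the polarization identity
\begin{equation*}
 2\langle p,q\rangle=\|p\|^2+\|q\|^2-\|p-q\|^2.
\end{equation*}
This gives
\begin{equation*}
 0=\sum_{i,j}\mu_i\mu_j\bigl(\lambda^2 d_i^2+\lambda^2 d_j^2-\|u(a_i)-u(a_j)\|^2\bigr),
\end{equation*}
where $d_i=\|x-a_i\|$. The same identity applied to the points $a_i-x$ in $X$ gives
\begin{equation*}
 \sum_{i,j}\mu_i\mu_j\bigl(d_i^2+d_j^2-\|a_i-a_j\|^2\bigr)=2\Bigl\|\sum_i\mu_i(a_i-x)\Bigr\|^2\ge0.
\end{equation*}
Using $\|u(a_i)-u(a_j)\|\le\|a_i-a_j\|$ together with $\lambda>1$, the first sum is strictly larger than $\lambda^2$ times a nonnegative quantity. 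It is therefore positive, since the $d_i$ are positive because $x\notin A$. This contradicts its being zero, so $\lambda\le1$ and the finite intersection is nonempty.

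Then comes the compactness step. The sets $\cl B_{\|x-a\|}(u(a))$ are closed, convex and bounded, hence weakly compact. They have the finite intersection property, so their full intersection is nonempty. Choosing $y$ in it extends $u$ to $A\cup\{x\}$ and keeps it $1$-Lipschitz.

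Finally, consider the set of $1$-Lipschitz extensions $(B,v)$ of $u$ with $A\subset B\subset X$, ordered by extension. Every chain has an upper bound given by the union, which is still $1$-Lipschitz because each pair of points lies in one member of the chain. By Kuratowski--Zorn there is a maximal element. The one-point step shows that its domain is all of $X$. Rescaling by $L$ gives $\Lip(\widetilde u,X)\le\Lip(u,A)$, and the reverse inequality is trivial.

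The main obstacle is the finite-point inequality, specifically the claim that the minimiser lies in the convex hull of the active values. I would justify it by a first-order variation: if $y_0$ were not in that hull, separate it from the hull by a hyperplane and move $y_0$ slightly in the separating direction. This strictly decreases every active ratio and keeps the inactive ones below $\lambda$, contradicting minimality.
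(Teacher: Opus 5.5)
The paper does not prove this theorem. It quotes it as classical, citing Kirszbraun, Valentine, and the explicit-formula proof of Azagra, Le Gruyer and Mudarra.

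Your proof is correct. It is the classical intersection-of-balls argument: a finite intersection property via the minimax point, weak compactness of closed balls, then Kuratowski--Zorn. It covers arbitrary $A\subset X$ and nonseparable spaces, which is exactly the generality the paper relies on.

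One sentence needs tightening. The first sum is not strictly larger than $\lambda^2$ times the source sum when $\mu$ is a point mass, since the off-diagonal terms then vanish. Your conclusion still holds, but the cleaner estimate is
\begin{equation*}
 \lambda^2d_i^2+\lambda^2d_j^2-\|u(a_i)-u(a_j)\|^2
 \geq\bigl(d_i^2+d_j^2-\|a_i-a_j\|^2\bigr)+(\lambda^2-1)\bigl(d_i^2+d_j^2\bigr).
\end{equation*}
Summed with weights $\mu_i\mu_j$, it bounds the first sum below by $2\|\sum_i\mu_i(a_i-x)\|^2+2(\lambda^2-1)\sum_i\mu_id_i^2>0$. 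This is exactly where $d_i>0$ enters.

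As for what each route buys: yours is short, purely geometric and self-contained, at the cost of choice and weak compactness. Its one-point step is precisely the nonemptiness of $\mathcal P(u,A,x)$ that the paper uses in Section~\ref{sec:admissible}. Applying the theorem to $u$ together with any admissible value at $x$ then gives the realisability statement made there. The cited explicit formula instead defines a canonical extension at every point at once, with no transfinite step.
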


Neither separability nor closedness of $A$ is required. The second
result supplies additional structure when the datum is finite.
We use the locally finite form of Brehm's interpolation theorem
\cite{Brehm}; see \cite{AkopyanTarasov} for a constructive proof.

\begin{theorem}[Brehm]\label{thm:brehm}
Let $E,F$ be Euclidean spaces of the same finite dimension, let
$P\subset E$ be finite, and let $u\colon P\to F$ be $1$-Lipschitz.
There is a $1$-Lipschitz extension $\widetilde u\colon E\to F$ and a
locally finite triangulation of $E$ such that the restriction of
$\widetilde u$ to each simplex is the restriction of an affine
isometry from $E$ onto $F$.
\end{theorem}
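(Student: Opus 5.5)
Since Theorem~\ref{thm:brehm} is quoted from \cite{Brehm,AkopyanTarasov}, I would follow the continuity method of Akopyan and Tarasov rather than Brehm's original argument. Identify $E$ and $F$ with $\R^n$ and write $P=\{p_1,\dots,p_m\}$, $q_i=u(p_i)$. I would prove a slightly stronger statement by induction on $m$: there is a piecewise isometric map $\widetilde u\colon\R^n\to\R^n$ extending $u$, with respect to a locally finite triangulation, and such a map is automatically $1$-Lipschitz. The automatic Lipschitz bound holds because a continuous map that is an isometry on every simplex is $1$-Lipschitz along every segment, since each segment meets finitely many simplices. So throughout I only need to control continuity, local finiteness and the isometry of each piece. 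For $m=1$ a translation suffices.

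For the inductive step I would deform the data. Put $q_i(t)=q_1+t(q_i-q_1)$ for $t\in[0,1]$. At $t=0$ all values coincide, and a piecewise isometric map with these values is obtained by folding. For instance, fold $\R^n$ repeatedly along hyperplanes bisecting pairs $p_i,p_j$ until every $p_i$ lands on a single point. Each fold is piecewise isometric with two pieces, so the composition is piecewise isometric with finitely many pieces. I would then let $t$ increase and maintain a piecewise isometric extension $f_t$ of $p_i\mapsto q_i(t)$. The key observation is that for $t<1$ the datum is strictly short on every pair with $q_i\ne q_j$. Locally in $t$, I would therefore modify $f_t$ by moving the images of the finitely many pieces through rigid motions depending continuously on $t$, while keeping the combinatorics of the triangulation fixed. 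This is a finite-dimensional system of constraints: pieces must agree on common facets, and the pieces containing $p_i$ must send it to $q_i(t)$. I expect this system to remain solvable as long as no new tight pair or degeneration appears.

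The main obstacle is the event at which this rigid continuation breaks down. Such an event occurs when two points of the domain whose images are being pulled apart reach distance equality, or when a fold has to be opened. At such a moment I would retriangulate. Using the induction hypothesis on fewer points, applied to the set of currently tight points where the map is locally an isometry, I would refold the map in a neighbourhood of a bisecting hyperplane. I expect this surgery to be local and to create only finitely many new simplices. Two things then have to be shown. First, only finitely many such events occur on $[0,1]$, which I would try to derive from the finiteness of $P$ and an analytic dependence on $t$. Second, the surgery preserves local finiteness.

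If this bookkeeping proves too delicate, I would fall back on the constructive proof of \cite{AkopyanTarasov} essentially verbatim. That proof proceeds by induction on the number of points, reduces first to strictly short data by a small rescaling about a point, and then passes to the limit using compactness of the finitely many combinatorial types of triangulations available with a bounded number of pieces. In that version the limit step is where care is needed, to ensure that the pieces do not degenerate.
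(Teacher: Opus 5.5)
The paper does not prove Theorem~\ref{thm:brehm}. It quotes it from \cite{Brehm}, cites \cite{AkopyanTarasov} for a constructive proof, and uses it as a black box. Read as a proof, your proposal has a genuine gap at its central step.

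Some preliminary points are sound. Continuity plus isometry on each simplex of a locally finite triangulation does give the $1$-Lipschitz bound. At most $m-1$ folds along bisecting hyperplanes do collapse $P$ to one point.

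The continuation step, as you state it, fails: if the source triangulation is fixed and only the rigid motions on the pieces vary, nothing can move.
\begin{itemize}
\item Two full-dimensional simplices sharing a facet $F$ carry isometries $g,g'$ that agree on the hyperplane $\operatorname{Aff}F$. Hence $g'=g$ or $g'=R\circ g$, where $R$ is the reflection in $g(\operatorname{Aff}F)$.
\item These two alternatives are closed and mutually exclusive, so along a family continuous in $t$ the choice never changes.
\item The simplices of a triangulation of $\R^n$ are connected through facets. Propagating along facets therefore gives $f_t=h_t\circ f_{t_0}$ for a single rigid motion $h_t$.
\end{itemize}
Then $\|f_t(p_i)-f_t(p_j)\|$ is independent of $t$, whereas the data require $t\|q_i-q_j\|$. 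Starting from your folded map, where all $p_i$ share one image, the continuation cannot even begin.

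To get any flexibility the crease pattern itself must move, and then you face a nonlinear system. Around each face of codimension two, the dihedral angles must satisfy a Kawasaki-type alternating-sum condition for the folded map to close up. In addition, each fixed $p_i$ must stay in a cell whose isometry sends it to $q_i(t)$. Solving this system on $[0,1]$, with only finitely many changes of combinatorics, is essentially the whole theorem. Your expectation that it remains solvable is not an argument.

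The surgery step is also unspecified. You do not say:
\begin{itemize}
\item what a tight pair of a piecewise isometric map is;
\item how the induction hypothesis for fewer points yields a refolding that matches the map outside a neighbourhood of the bisecting hyperplane;
\item why only finitely many events occur. No analytic family has been constructed to which analyticity could apply.
\end{itemize}

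The fallback does not close the gap either. Following \cite{AkopyanTarasov} verbatim is a citation, exactly as in the paper. A limit argument from strictly short data also needs a uniform bound on the number of pieces of the approximating extensions, which you do not establish. Without such a bound the limit step fails: the sawtooth maps $x\mapsto(\dist(x_1,\varepsilon\mathbb Z),x_2,\dots,x_n)$ are piecewise isometric and converge uniformly to $x\mapsto(0,x_2,\dots,x_n)$, which is not piecewise isometric. With a bound the limit would be harmless, since cells collapsing to lower dimension have measure zero.

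What is missing is a concrete mechanism that creates new folds as the data change, together with a proof that finitely many folds suffice. That mechanism is the substance of \cite{Brehm,AkopyanTarasov}.
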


In particular, the restriction of $\widetilde u$ to any compact
polyhedral set in $E$ admits a finite triangulation on which it
is affine and isometric on each simplex. Indeed, compactness
leaves only finitely many simplices of the global triangulation.
Their intersections with the given polytopes are polytopes, not
necessarily simplices, on which $\widetilde u$ is affine;
triangulating these polytopes compatibly along their common faces
gives the assertion.

The same conclusion, with affine isometric embeddings, holds
when $\dim E\leq\dim F<\infty$. Embed $E$ in a Euclidean
space $\widehat E$ of dimension $\dim F$, apply the theorem,
and restrict to $E$. The intersections of the ambient simplices with $E$ are
polytopes on which the map is affine; triangulating them
compatibly gives the required locally finite triangulation.

\subsection{Pasting and boundary comparison}

We will repeatedly use the following consequences of the segment
structure of a Hilbert space. The open sets in the lemma need not
be bounded or connected, and the family in part~\textup{(i)} may
be uncountable.

\Needspace{6\baselineskip}
\begin{lemma}[Pasting Lipschitz maps]\label{lem:pasting}
Let $X,Y$ be Hilbert spaces and let $L\geq0$.
\begin{enumerate}
\item[\textup{(i)}] Let $v\colon X\to Y$ be $L$-Lipschitz, and let
$(U_i)_{i\in I}$ be pairwise disjoint open subsets of $X$.
For each $i\in I$, let $v_i\colon\cl U_i\to Y$ be
$L$-Lipschitz and agree with $v$ on $\partial U_i$.
The map equal to $v_i$ on each $U_i$ and to $v$ elsewhere
is $L$-Lipschitz on $X$.
\item[\textup{(ii)}] Let $V\subset X$ be open, and let
$w\colon\cl V\to Y$ be continuous. Suppose every point of $V$
has a neighbourhood in $V$ on which $w$ is $L$-Lipschitz, and
$\Lip(w,\partial V)\leq L$. Then $\Lip(w,\cl V)\leq L$.
\end{enumerate}
\end{lemma}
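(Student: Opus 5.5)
Both parts rest on one segment observation: a straight segment $[x,y]$ in $X$ can be cut into finitely many consecutive pieces, each lying either in one open set or in its complement. Part~(ii) is the basic estimate, and part~(i) reduces to it together with this segment splitting.

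\textbf{Part (ii).} First I take $x,y\in V$ with $[x,y]\subset V$. The segment is compact, so finitely many of the given neighbourhoods cover it. By a Lebesgue number argument, the segment splits into consecutive points $x=t_0,\dots,t_m=y$ with consecutive pairs in a common neighbourhood. The triangle inequality along the collinear points then gives $\|w(x)-w(y)\|\le L\|x-y\|$.

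Next take general $x,y\in\cl V$. By continuity of $w$ it suffices to treat $x,y\in V$; approximating by interior points works because $\partial V\subset\cl V$ and $w$ is continuous on $\cl V$. If $[x,y]\not\subset V$, let $s$ be the first and $s'$ the last point of $[x,y]$ outside $V$. These exist because $[x,y]\setminus V$ is compact and nonempty, and both lie in $\partial V$. Then $[x,s)$ and $(s',y]$ lie in $V$. Applying the previous step on $[x,s-\delta]$ and letting $\delta\to0$ bounds the first piece, the last piece is handled the same way, and the hypothesis $\Lip(w,\partial V)\le L$ bounds the middle. Since $\|x-s\|+\|s-s'\|+\|s'-y\|=\|x-y\|$ by collinearity, the estimate follows.

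\textbf{Part (i).} Let $w$ be the pasted map, and fix $x,y\in X$. The pieces $v_i$ each satisfy the bound on $\cl U_i$, and $v$ satisfies it on $X$. Each $U_i$ meets $[x,y]$ in a relatively open set, which is a countable union of open subintervals, and the endpoints of these subintervals lie in $\partial U_i$, where $w=v=v_i$.

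\emph{Finitely many intervals.} If $[x,y]$ meets only finitely many such intervals, I order the endpoints along the segment. On each interval I apply $v_i$ (valid on $\cl U_i$). On each complementary gap the endpoints are points where $w=v$, so I apply $v$. Summing the pieces by collinearity gives $\|w(x)-w(y)\|\le L\|x-y\|$.

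\emph{Infinitely many intervals.} This is the main obstacle; there may be uncountably many sets $U_i$, but only countably many intervals. I will avoid a limiting argument over arbitrary partitions by this device. If $x\in U_i$, replace $x$ by the point $x'$ where the segment exits $\cl U_i$, or by $y$ if it never exits. Then $\|w(x)-w(x')\|\le L\|x-x'\|$ with $w(x')=v(x')$, and I do the same for $y$. This reduces the problem to comparing two points $a,b$ with $w(a)=v(a)$ and $w(b)=v(b)$, and there $\|v(a)-v(b)\|\le L\|a-b\|$ holds directly. Collinearity of $x,x',y',y$ then finishes the estimate, and no accumulation issue arises.

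The remaining case is $x,y$ in the same $U_i$ with the segment leaving it. Here the same exit/re-entry points $x',y'\in\partial U_i$ give the bound via $v_i$ on the end pieces and $v$ in the middle. The only point to check carefully is that the exit points lie in $\partial U_i$ and hence in the domain where $w=v$.
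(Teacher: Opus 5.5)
Part~(ii) of your proposal is the paper's argument. Part~(i) uses the same three-piece split of $[x,y]$ as the paper, but your choice of cut points leaves a step that can fail.

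You take $x'$ to be where the segment exits $\cl U_i$, and $y'$ where the segment, traversed backwards from $y$, exits $\cl U_j$. You then say collinearity finishes the estimate. That needs $x'$ to lie between $x$ and $y'$, and for closures this is not automatic: disjoint open sets can have closures that share a piece of the segment. For example, in $\R^2$ let $U_i=B_{1/2}((-1,0))\cup\bigl((-1,1)\times(0,\infty)\bigr)$ and $U_j=B_{1/2}((2,0))\cup\bigl((0,3)\times(-\infty,0)\bigr)$, with $x=(-1,0)$ and $y=(2,0)$. Your recipe gives $x'=(1,0)$ and $y'=(0,0)$, so $\|x-x'\|+\|x'-y'\|+\|y'-y\|=5>3=\|x-y\|$, and the resulting bound is too weak.

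The repair is the paper's choice. Let $x'$ be the first point of $[x,y]$ not in $U_i$, and $y'$ the last point not in $U_j$. Then $[x,x')\subset U_i$ and $(y',y]\subset U_j$ are disjoint, which forces $x'$ to come no later than $y'$. Moreover $x'\in\partial U_i$, and a point of $\partial U_i$ lies in no $U_k$, since an open set disjoint from $U_i$ misses $\cl U_i$. Hence $w(x')=v(x')=v_i(x')$, and likewise at $y'$.

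With this choice one argument covers every configuration, so your distinction between finitely and infinitely many intervals is unnecessary. The case $x,y\in U_i$ needs no cut points at all, since $v_i$ is $L$-Lipschitz on the whole of $\cl U_i$.
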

\begin{proof}
In both parts, split a segment $[x,y]$ into at most three
pieces. For part~\textup{(i)}, denote the pasted map by $w$.
The estimate is immediate if $x,y$ lie in the same $U_i$ or
both lie outside their union. Otherwise, let $x'$ be the first
exit from the $U_i$ containing $x$, and let $y'$ be the last
entry into the $U_j$ containing $y$; use $x'=x$ or $y'=y$
when that endpoint lies outside the union. The end pieces are
controlled by the corresponding $v_i$, while the middle pair
is controlled by $v$, since boundary values agree.

For part~\textup{(ii)}, first take $x,y\in V$. Local bounds
control every segment contained in $V$, by a finite subdivision.
If $[x,y]$ leaves $V$, let $x',y'$ be its first and last points
outside $V$. Continuity extends the local bounds to the end
pieces, and the boundary bound controls the middle pair.

In either case, the estimates add to
\begin{equation*}
 \|w(x)-w(y)\|
 \leq L\bigl(\|x-x'\|+\|x'-y'\|+\|y'-y\|\bigr)
 =L\|x-y\|.
\end{equation*}
Continuity gives the conclusion on $\cl V$ in
part~\textup{(ii)}.
\end{proof}

In particular, an $L$-Lipschitz datum on a closed subset of $X$
may be extended separately on the complementary components, provided
each extension has Lipschitz constant at most $L$. To see this, first
take a global Kirszbraun extension and then apply part~\textup{(i)}.

\subsection{Affine rigidity and leaves}\label{subsec:leaves}
The construction of Section~\ref{sec:separable} will repeatedly
prescribe the values at the vertices of a simplex. If those values preserve
all distances, the $1$-Lipschitz bound forces the affine value at
every point of the simplex. We record this rigidity together with
its interpretation in terms of leaves. The definition and geometric
properties below agree with
\cite[Definition~2.2, Lemma~2.4 and Corollary~2.5]{CiosmakLeaves};
the short proof works in arbitrary Hilbert spaces.

Let $u\colon X\to Y$ be $1$-Lipschitz. A nonempty set
$\mathcal S\subset X$ is a \emph{leaf of $u$} if
$u|_{\mathcal S}$ preserves distances and, for every
$y\in X\setminus\mathcal S$, there is $x\in\mathcal S$ such that
\begin{equation*}
 \|u(y)-u(x)\|<\|y-x\|.
\end{equation*}
Equivalently, a leaf is maximal with respect to inclusion among
sets on which $u$ preserves distances. The same definition applies
to a $1$-Lipschitz map defined on a subset of $X$, in particular to
a datum on a finite set.

\begin{lemma}[Existence and geometry of leaves]\label{lem:leaves}
Let $X,Y$ be real Hilbert spaces and let $u\colon X\to Y$ be
$1$-Lipschitz. Every subset on which $u$ preserves distances is
contained in a leaf. Each leaf $\mathcal S$ is closed and convex,
and $u|_{\mathcal S}$ is the restriction of an affine isometric
embedding on $\cl(\operatorname{Aff}\mathcal S)$.
More precisely, for every $x_0\in\mathcal S$ there is a linear
isometric embedding
\begin{equation*}
 T\colon\cl\bigl(\Span(\mathcal S-x_0)\bigr)\to Y
\end{equation*}
such that
\begin{equation*}
 u(x)=u(x_0)+T(x-x_0)\text{ for }x\in\mathcal S.
\end{equation*}
\end{lemma}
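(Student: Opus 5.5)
The plan is to run a Kuratowski--Zorn argument for the existence of leaves, and to obtain closedness, convexity and the affine structure from the parallelogram identity.

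\emph{Existence.} Let $S_0\subset X$ be a set on which $u$ preserves distances. Consider the family of sets containing $S_0$ on which $u$ preserves distances, ordered by inclusion. Preservation of distances is a condition on pairs of points, so the union of a chain again preserves distances. By the Kuratowski--Zorn lemma there is a maximal element $\mathcal S$. Maximality says exactly that for each $y\notin\mathcal S$ some $x\in\mathcal S$ satisfies $\|u(y)-u(x)\|<\|y-x\|$, since $u$ is $1$-Lipschitz and a non-strict inequality is therefore an equality. So $\mathcal S$ is a leaf.

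\emph{Closedness and convexity.} By continuity of $u$, distances are preserved on $\cl\mathcal S$, and maximality gives $\mathcal S=\cl\mathcal S$. For convexity, take $x,y\in\mathcal S$, $t\in[0,1]$ and $z=(1-t)x+ty$.
\begin{itemize}
\item The $1$-Lipschitz bound gives $\|u(z)-u(x)\|\le t\|x-y\|$ and $\|u(z)-u(y)\|\le(1-t)\|x-y\|$.
\item Their sum is at least $\|u(x)-u(y)\|=\|x-y\|$, so both are equalities.
\item Strict convexity of the Hilbert norm forces $u(z)=(1-t)u(x)+tu(y)$.
\end{itemize}
Next take any $w\in\mathcal S$. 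The function $s\mapsto\|s-w\|^2$ on the segment and the function $s\mapsto\|u(s)-u(w)\|^2$ on its image are both quadratic, with equal values at the endpoints. Expanding $\|(1-t)a+tb-c\|^2=(1-t)\|a-c\|^2+t\|b-c\|^2-t(1-t)\|a-b\|^2$ in both spaces gives $\|u(z)-u(w)\|=\|z-w\|$. The same computation with $w=z'$ another point of the segment handles distances within the segment. Hence $\mathcal S\cup\{z\}$ preserves distances, and maximality gives $z\in\mathcal S$.

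\emph{Affine structure.} Fix $x_0\in\mathcal S$ and set $T_0(x-x_0)=u(x)-u(x_0)$ for $x\in\mathcal S$. Distance preservation together with the polarisation identity, applied to the differences $x-x_0$ and $x'-x_0$ and their counterparts in $Y$, gives
\[
\langle T_0(x-x_0),T_0(x'-x_0)\rangle=\langle x-x_0,x'-x_0\rangle .
\]
A map preserving inner products on a set of vectors extends uniquely to a linear isometric embedding of their span: the Gram matrix argument shows that $\sum a_i(x_i-x_0)=0$ implies $\|\sum a_iT_0(x_i-x_0)\|=0$, so the extension is well defined. By continuity it extends to $\cl(\Span(\mathcal S-x_0))$. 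This yields $T$ with $u(x)=u(x_0)+T(x-x_0)$ on $\mathcal S$, and $x\mapsto u(x_0)+T(x-x_0)$ is an affine isometric embedding on $\cl(\operatorname{Aff}\mathcal S)$.

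The step I expect to need the most care is the convexity step, namely checking that the interior point preserves distances to \emph{all} points of $\mathcal S$ and not just to the endpoints. The parallelogram expansion above handles this. No separability is used anywhere, because Zorn's lemma replaces any sequential construction.
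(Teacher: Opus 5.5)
Your proposal is correct and follows essentially the paper's argument. Both use Kuratowski--Zorn for a maximal distance-preserving set, continuity for closedness, polarisation for the linear isometry $T$, and equality in the triangle inequality plus maximality for convexity. The only difference is the order: the paper builds $T$ first and then notes that $u(z)$ is the value of that same affine isometry, whereas you check $\|u(z)-u(w)\|=\|z-w\|$ directly through the quadratic identity, which is the same computation written out.
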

\begin{proof}
A union of a chain of sets on which $u$ preserves distances
has the same property. The Kuratowski--Zorn lemma therefore
gives a maximal such set $\mathcal S$ containing any given
isometric subset. Its closure still preserves distances, so
$\mathcal S$ is closed. Fix $x_0\in\mathcal S$. Polarisation
shows that $x-x_0\mapsto u(x)-u(x_0)$, $x\in\mathcal S$,
extends linearly and continuously to an isometry $T$ on
$\cl(\Span(\mathcal S-x_0))$.

For $x,y\in\mathcal S$ and $z=(1-t)x+ty$, $0\leq t\leq1$,
equality in the triangle inequality forces
$u(z)=(1-t)u(x)+tu(y)$. This is the value of the same affine
isometry, so $\mathcal S\cup\{z\}$ still preserves distances.
Maximality gives $z\in\mathcal S$, proving convexity.
\end{proof}

We refer to the following consequence as \emph{affine rigidity}:
whenever prescribed values preserve distances on a set $S$, every
$1$-Lipschitz extension agrees with their affine isometric
interpolation on $\cl(\conv S)$. Thus finitely many
values can permanently determine an entire simplex. Distinct
leaves can intersect only along their relative boundaries
\cite[Lemma~2.11]{CiosmakLeaves}.

\section{Admissible values and uniform margins}\label{sec:admissible}

We will consider a construction that modifies an extension while
keeping an arbitrary closed set of prescribed values fixed. To make such a modification,
we need a positive margin in the Lipschitz inequalities, uniform
over the prescribed set. Strict inequality for each prescribed
point separately would not suffice, since the set need not be
compact. This section identifies where a uniform margin is
available and shows how local strictness supplies it.

Throughout this section, $X$ and $G$ are Hilbert spaces with
$G\ne\{0\}$, $A\subset X$ is nonempty and closed, and
$u\colon A\to G$ is $1$-Lipschitz. The target is called $G$
rather than $Y$ because the results will be applied both to $Y$
and to the auxiliary target of Section~\ref{sec:separable},
which enlarges $Y$ when it has too few dimensions for Brehm's
theorem. For $x\in X$, define the
\emph{admissible-value set}
\begin{equation*}
 \mathcal P(u,A,x)=\{p\in G\mid
 \|p-u(y)\|\leq\|x-y\|\text{ for every }y\in A\}.
\end{equation*}
Kirszbraun's theorem says precisely that this set is nonempty
and that every $p\in\mathcal P(u,A,x)$ occurs as the value at
$x$ of a global $1$-Lipschitz extension of $u$. The set is bounded,
closed and convex. For $x\notin A$, it is the intersection of
the closed balls $\cl B_{\|x-y\|}(u(y))$, $y\in A$; if
$x\in A$, it is the singleton $\{u(x)\}$.

To measure the room left by all the constraints at once, put
\begin{equation}\label{eq:margin-function}
 \Phi(x,p)=\Phi_{u,A}(x,p)
 =\sup\{\|p-u(y)\|-\|x-y\|\mid y\in A\}.
\end{equation}
Thus $\Phi(x,p)\leq-\eta$ means that the proposed value $p$
has margin at least $\eta$ against every prescribed value.
The function is finite: for a fixed $y_0\in A$,
\begin{equation*}
 \|p-u(y_0)\|-\|x-y_0\|
 \leq\Phi(x,p)\leq\|p-u(y_0)\|+\|x-y_0\|.
\end{equation*}
It is convex in $p$ and satisfies
\begin{equation*}
 |\Phi(x,p)-\Phi(x',p')|
 \leq\|x-x'\|+\|p-p'\|.
\end{equation*}
For $\eta>0$, the uniform margin has the equivalent geometric form
\begin{equation*}
 \Phi(x,p)\leq-\eta
 \quad\Longleftrightarrow\quad
 \cl B_\eta(p)\subset\mathcal P(u,A,x).
\end{equation*}
Indeed, a ball of radius $\eta$ about $p$ lies in
$\cl B_r(q)$ exactly when $\|p-q\|+\eta\leq r$.
In particular,
\begin{equation}\label{eq:margin-interior}
 \mathcal P(u,A,x)=\{p\in G\mid\Phi(x,p)\leq0\},\qquad
 \interior_G\mathcal P(u,A,x)=\{p\in G\mid\Phi(x,p)<0\}.
\end{equation}

The \emph{degeneracy set} consists of the points at which no
admissible value has a positive margin:
\begin{equation*}
 D(u,A)=\{x\in X\mid\interior_G\mathcal P(u,A,x)=\emptyset\}
       =\{x\in X\mid\Phi(x,p)\geq0\text{ for every }p\in G\}.
\end{equation*}
This set is closed, as an intersection of the closed sets
$\{x\mid\Phi(x,p)\geq0\}$, and it contains $A$ since
$G\ne\{0\}$. Degeneracy refers to empty
interior in the target, and does not in general mean that the
admissible value is unique. For example, let $X=G=\R^2$, take
$A=\R\times\{0\}$, and prescribe $u(t,0)=(t,0)$. The
constraints at $x=(s,r)$ give
\begin{equation*}
 \mathcal P(u,A,(s,r))=\{s\}\times[-|r|,|r|].
\end{equation*}
Thus $D(u,A)=X$, although the admissible values are not unique
off $A$.

If the degeneracy set is proper, degeneracy does force uniqueness:
all its values are then determined by the original datum.

\begin{lemma}[Forced values on a proper degeneracy set]\label{lem:forced-degeneracy}
If $D(u,A)\ne X$, then $\mathcal P(u,A,z)$ is a singleton
for every $z\in D(u,A)$. Consequently, all global
$1$-Lipschitz extensions of $u$ agree on $D(u,A)$.
\end{lemma}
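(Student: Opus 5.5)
The plan is to argue by contradiction. Suppose some $z\in D(u,A)$ has two distinct admissible values $p_1\neq p_2$ in $\mathcal P(u,A,z)$. Using a point $x_0\notin D(u,A)$, which exists because $D(u,A)\ne X$, we build a value $p$ with $\Phi(z,p)<0$. By \eqref{eq:margin-interior} this puts $p$ in $\interior_G\mathcal P(u,A,z)$, contradicting $z\in D(u,A)$. The second assertion then follows at once: every value of a global $1$-Lipschitz extension at $z$ lies in $\mathcal P(u,A,z)$, which is a singleton. The difficulty is that a negative margin is needed \emph{uniformly} over $y\in A$, and $A$ may be unbounded. So we produce two candidate values, one with a uniform margin on bounded parts of $A$ and one with a uniform margin far away, and then average them.

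\emph{Near candidate.} Put $m=\tfrac12(p_1+p_2)$, $\delta=\tfrac12\|p_1-p_2\|>0$ and $r_y=\|z-y\|$. The parallelogram identity gives
\begin{equation*}
 \|m-u(y)\|^2\leq r_y^2-\delta^2 .
\end{equation*}
Hence $r_y\geq\delta$ and $\|m-u(y)\|-r_y\leq-\delta^2/(2R)$ whenever $r_y\leq R$. The margin thus holds on every bounded part of $A$, but it degenerates as $r_y\to\infty$.

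\emph{Far candidate (the reflection).} Choose $p_0$ with $\Phi(x_0,p_0)\leq-\eta<0$ and put $d=\|x_0-z\|$. Let $x_1=2z-x_0$, and by Kirszbraun's theorem choose any $q\in\mathcal P(u,A,x_1)$. Set $p'=\tfrac12(p_0+q)$. By the parallelogram identity, applied to $\|p'-u(y)\|^2$ and to $\|x_0-y\|^2+\|x_1-y\|^2=2r_y^2+2d^2$,
\begin{equation*}
 \|p'-u(y)\|^2\leq\tfrac12(\|x_0-y\|-\eta)^2+\tfrac12\|x_1-y\|^2
 \leq r_y^2-\eta\|x_0-y\|+d^2+\tfrac{\eta^2}{2}.
\end{equation*}
Since $\|x_0-y\|\geq r_y-d$, the right side is $r_y^2-\eta r_y+C$ with $C$ independent of $y$. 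It follows that $\|p'-u(y)\|-r_y\leq-\eta/4$ for $r_y\geq R$, for some $R$. Moreover $\|p'-u(y)\|-r_y\leq K$ for all $y$, for a constant $K$, because $\sqrt{r^2-\eta r+C}-r$ is bounded above for $r\geq\delta$.

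\emph{Combination.} Take $p=\lambda m+(1-\lambda)p'$ with $\lambda\in(0,1)$. By the triangle inequality,
\begin{equation*}
 \|p-u(y)\|-r_y\leq\lambda\bigl(\|m-u(y)\|-r_y\bigr)+(1-\lambda)\bigl(\|p'-u(y)\|-r_y\bigr).
\end{equation*}
For $r_y\geq R$, the first term is $\leq0$ and the second is $\leq-(1-\lambda)\eta/4$. For $r_y\leq R$, the right side is at most $-\lambda\delta^2/(2R)+(1-\lambda)K$, which is negative once $\lambda$ is chosen close enough to $1$. Thus $\Phi(z,p)<0$, which is the desired contradiction.

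The main obstacle is the non-uniformity of the strict-convexity margin at infinity. The reflection through $z$ is what converts the linear margin $\eta$ at $x_0$ into a linear negative term in the squared estimate, and that term dominates the constant $d^2$ far away. I expect the only delicate bookkeeping to be checking that the constants $R$, $K$ and $\lambda$ can be fixed in this order, independently of $y$.
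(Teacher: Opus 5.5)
Your argument is correct and follows the paper's proof. It uses the same two candidates: the midpoint of the two admissible values at $z$, and the average $\tfrac12(p_0+q)$ with $q$ admissible at the reflected point $2z-x_0$, combined convexly to give a value with $\Phi(z,p)<0$. The only difference is the final bookkeeping: the paper bounds the candidates by $\sqrt{r_y^2-\delta^2}$ and $\sqrt{r_y^2+d^2}-\eta/2$ and uses concavity of the square root with weight $\lambda=\delta^2/(\delta^2+d^2)$ on the reflected candidate, which gives the explicit uniform margin $\lambda\eta/2$ directly and avoids your near/far splitting of $A$.
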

\begin{proof}
Choose $x_0\notin D(u,A)$ and $p_0\in G$ with
$\Phi(x_0,p_0)\leq-\eta<0$. Suppose $z\in D(u,A)$ has two
distinct admissible values $b,c$, and choose an admissible value
$q$ at the reflected point $2z-x_0$. Fix $y\in A$.
The parallelogram identity in the target gives
\begin{equation*}
 \Bigl\|\frac{b+c}{2}-u(y)\Bigr\|
 \leq\sqrt{\|z-y\|^2-\tfrac14\|b-c\|^2}.
\end{equation*}
The margin at $p_0$, the admissibility of $q$, and the
parallelogram identity in the source give
\begin{equation}\label{eq:reflected-margin}
 \Bigl\|\frac{p_0+q}{2}-u(y)\Bigr\|
 \leq\frac{\|x_0-y\|+\|2z-x_0-y\|}{2}-\frac\eta2
 \leq\sqrt{\|z-y\|^2+\|x_0-z\|^2}-\frac\eta2.
\end{equation}
Put
\begin{equation*}
 \lambda=\frac{\|b-c\|^2}{\|b-c\|^2+4\|x_0-z\|^2}\in(0,1],
 \qquad
 p=(1-\lambda)\frac{b+c}{2}+\lambda\frac{p_0+q}{2}.
\end{equation*}
Concavity of the square root yields, uniformly over $y\in A$,
\begin{equation*}
 \begin{aligned}
 \|p-u(y)\|
 &\leq(1-\lambda)\sqrt{\|z-y\|^2-\tfrac14\|b-c\|^2}
       +\lambda\sqrt{\|z-y\|^2+\|x_0-z\|^2}-\frac{\lambda\eta}2\\
 &\leq\sqrt{\|z-y\|^2-\tfrac14(1-\lambda)\|b-c\|^2
       +\lambda\|x_0-z\|^2}-\frac{\lambda\eta}2
 =\|z-y\|-\frac{\lambda\eta}2.
 \end{aligned}
\end{equation*}
This positive margin contradicts $z\in D(u,A)$.
\end{proof}

\begin{figure}[t]
\centering
\begin{tikzpicture}[scale=0.9]
 \begin{scope}
  \coordinate (c1) at (0,0); \coordinate (c2) at (1.4,0.3); \coordinate (c3) at (0.6,-1.2);
  \begin{scope}
   \clip (c1) circle (1.6); \clip (c2) circle (1.5);
   \fill[gray!30] (c3) circle (1.5);
  \end{scope}
  \draw[thin] (c1) circle (1.6); \draw[thin] (c2) circle (1.5); \draw[thin] (c3) circle (1.5);
  \fill (c1) circle (0.04); \fill (c2) circle (0.04); \fill (c3) circle (0.04);
  \node at (-0.95,0.32) {$u(y_1)$}; \draw[thin] (-0.55,0.2) -- (c1);
  \node at (2.2,0.75) {$u(y_2)$}; \draw[thin] (1.9,0.58) -- (c2);
  \node at (0.6,-2.05) {$u(y_3)$}; \draw[thin] (0.6,-1.85) -- (c3);
  \coordinate (p) at (0.7,-0.35);
  \draw[dashed] (p) circle (0.5); \fill (p) circle (0.04);
  \draw[->,thin] (p) -- ++(215:0.5); \node at (-0.1,-0.9) {$\eta$};
  \node at (0.55,-3.1) {$x\notin D(u,A)$};
 \end{scope}
 \begin{scope}[xshift=6.2cm]
  \coordinate (d1) at (-1.2,-0.4); \coordinate (d2) at (1.2,-0.4);
  \draw[thin] (d1) circle (1.2); \draw[thin] (d2) circle (1.2);
  \fill (d1) circle (0.04); \fill (d2) circle (0.04);
  \node at (-1.2,-0.7) {$u(y_1)$}; \node at (1.2,-0.7) {$u(y_2)$};
  \fill (0,-0.4) circle (0.06); \draw[thin] (0,-0.5) -- (0,-1.75); \node at (0,-1.98) {$\mathcal P(u,A,x)$};
  \node at (0,-3.1) {$x\in D(u,A)$};
 \end{scope}
\end{tikzpicture}
\caption{Admissible values are an intersection of balls in the
target. A uniform margin $\eta>0$ at $p$ means that the whole
ball $\cl B_\eta(p)$ is admissible (left). At a degenerate source
point the admissible set has empty interior; a singleton is shown
on the right. Such a set can have more than one point only when
$D(u,A)=X$ (Lemma~\ref{lem:forced-degeneracy}).}
\label{fig:admissible}
\end{figure}

Passing to a larger nonzero Hilbert target does not change
$D(u,A)$. Indeed, if $u(A)$ lies in a nonzero closed subspace
$Y\subset G$, orthogonal projection gives
$\Phi(x,P_Yp)\leq\Phi(x,p)$. A value with positive margin
therefore exists in $G$ exactly when one exists in $Y$. We will
use this when enlarging the target in
Section~\ref{sec:separable}.

For a chosen extension $v$, write $\sigma_v(x)=-\Phi(x,v(x))$
for the margin of its actual value at $x$. The next lemma turns
a local Lipschitz constant below one into a positive lower bound
for this margin, uniform over all of $A$.

\begin{lemma}[A uniform margin from local strictness]\label{lem:gap}
Let $v\colon X\to G$ be a global $1$-Lipschitz extension of $u$.
The function
\begin{equation*}
 \sigma_v(x)=-\Phi(x,v(x))
 =\inf\{\|x-y\|-\|v(x)-u(y)\|\mid y\in A\}
\end{equation*}
is finite, nonnegative and $2$-Lipschitz.
It is positive at $x$ exactly when
$v(x)\in\interior_G\mathcal P(u,A,x)$. If $r>0$,
$B_r(x)\cap A=\emptyset$, and $\Lip(v,B_r(x))\leq k<1$, then
\begin{equation*}
 \sigma_v(x)\geq(1-k)r.
\end{equation*}
Consequently, if $v$ is locally strictly $1$-Lipschitz on an
open set $O\subset X\setminus A$, then $\sigma_v>0$ on $O$, and
\begin{equation*}
 \min\{\sigma_v(x)\mid x\in K\}>0
\end{equation*}
for every nonempty compact set $K\subset O$.
\end{lemma}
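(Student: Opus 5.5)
The plan is to establish the properties of $\sigma_v$ directly from the definition of $\Phi$ and the estimates already recorded, and then obtain the quantitative bound from a segment argument in the source. Finiteness is inherited from $\Phi$. Nonnegativity holds because $v$ is a global $1$-Lipschitz extension: $\|v(x)-u(y)\|=\|v(x)-v(y)\|\leq\|x-y\|$ for every $y\in A$, so every term in the infimum is nonnegative. For the $2$-Lipschitz bound I will combine the estimate $|\Phi(x,p)-\Phi(x',p')|\leq\|x-x'\|+\|p-p'\|$ with $\|v(x)-v(x')\|\leq\|x-x'\|$. The characterisation of positivity is exactly \eqref{eq:margin-interior} with $p=v(x)$.

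For the main estimate, fix $y\in A$ and put $d=\|x-y\|$; since $B_r(x)\cap A=\emptyset$ we have $d\geq r$. For $0<s<r$ let $z_s=x+s(y-x)/d$ be the point at distance $s$ from $x$ on the segment $[x,y]$, so $z_s\in B_r(x)$ and $\|z_s-y\|=d-s$. I split the value difference through $z_s$:
\begin{equation*}
 \|v(x)-u(y)\|\leq\|v(x)-v(z_s)\|+\|v(z_s)-v(y)\|\leq ks+(d-s),
\end{equation*}
where the first term uses $\Lip(v,B_r(x))\leq k$ and the second uses the global $1$-Lipschitz bound. Hence $\|x-y\|-\|v(x)-u(y)\|\geq(1-k)s$. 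Letting $s\uparrow r$, which is harmless because $v$ is continuous, gives $(1-k)r$. Taking the infimum over $y\in A$ yields $\sigma_v(x)\geq(1-k)r$. Note that this bound is uniform in $y$ even though $A$ need not be compact, which is precisely the point of the lemma.

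For the consequence, each $x\in O$ has $r,k$ with $B_r(x)\subset O\subset X\setminus A$ and $\Lip(v,B_r(x))\leq k<1$, so $\sigma_v(x)>0$. Since $\sigma_v$ is continuous ($2$-Lipschitz), it attains its minimum on a nonempty compact $K\subset O$, and that minimum is positive.

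I expect no real obstacle. The only care needed is to use the open ball, that is $s<r$ followed by a limit, and to ensure the point $z_s$ stays away from $A$, which holds because $s<r\leq d$.
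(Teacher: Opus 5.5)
Your proposal is correct and follows essentially the same route as the paper: the qualitative properties of $\sigma_v$ come from those of $\Phi$, the extension property and \eqref{eq:margin-interior}, and the bound $\sigma_v(x)\geq(1-k)r$ comes from splitting $[x,y]$ at distance $r$ from $x$. The only difference is cosmetic: the paper places the splitting point on the sphere and uses continuity to extend the local Lipschitz bound there, whereas your limit $s\uparrow r$ does not actually need continuity, because the left-hand side of $\|x-y\|-\|v(x)-u(y)\|\geq(1-k)s$ does not depend on $s$.
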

\begin{proof}
The assertions about $\sigma_v$ follow from the corresponding
properties of $\Phi$, the extension property, and
\eqref{eq:margin-interior}. For the quantitative bound, fix
$y\in A$ and let $z\in[x,y]$ satisfy $\|z-x\|=r$.
On the first part of this segment the extension contracts by a
factor at most $k$; on the rest it is $1$-Lipschitz. Thus
\begin{equation*}
 \|v(x)-u(y)\|\leq kr+\|z-y\|
 =\|x-y\|-(1-k)r,
\end{equation*}
where continuity allows the local estimate up to the sphere.
Taking the infimum over $y$ gives the uniform margin. The last
assertion follows from positivity and continuity on $K$.
\end{proof}

The estimate has a useful local interpretation. At a point of
$D(u,A)\setminus A$, every global $1$-Lipschitz extension has
Lipschitz constant one on every neighbourhood. Outside $D(u,A)$,
a value with margin $\eta>0$ may be prescribed constantly on
$\cl B_\eta(x)$: for $z$ in that ball,
$\|p-u(y)\|\leq\|x-y\|-\eta\leq\|z-y\|$.
Kirszbraun's theorem then gives a global extension with this
constant neighbourhood.

Finally, a comparison boundary with Lipschitz constant below one
provides a locally strict replacement inside it. Such a comparison
set must therefore avoid $D(u,A)$.

\begin{lemma}[Strict comparison avoids degeneracy]\label{lem:paste}
Let $\widetilde u\colon X\to G$ be a global $1$-Lipschitz
extension of $u$, and let $V\subset X\setminus A$ be nonempty
and open. If $k=\Lip(\widetilde u,\partial V)<1$, there is a
global $1$-Lipschitz extension $w$ of $u$ which agrees with
$\widetilde u$ outside $V$ and is $k$-Lipschitz on $V$.
Moreover, $V\cap D(u,A)=\emptyset$.
\end{lemma}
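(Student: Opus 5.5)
The construction of $w$ is a Kirszbraun extension followed by pasting. I will then use $w$ together with Lemma~\ref{lem:gap} to obtain a positive margin at every point of $V$.

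\emph{Construction of $w$.} The restriction $\widetilde u|_{\partial V}$ is $k$-Lipschitz. If $\partial V$ has at most one point, the constant value there works, or any constant if $\partial V$ is empty. Otherwise, Theorem~\ref{thm:kirszbraun} gives a $k$-Lipschitz map $h\colon X\to G$ agreeing with $\widetilde u$ on $\partial V$. I set $w=h$ on $V$ and $w=\widetilde u$ outside $V$, and apply Lemma~\ref{lem:pasting}(i) with the single open set $U=V$, the map $v=\widetilde u$ and $L=1$. Since $k\le 1$, the restriction $h|_{\cl V}$ is $1$-Lipschitz and agrees with $\widetilde u$ on $\partial V$, so $w$ is globally $1$-Lipschitz. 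By construction $w$ is $k$-Lipschitz on $V$. Because $V\subset X\setminus A$, $w$ still equals $u$ on $A$.

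There is a degenerate case to handle. If $\partial V=\emptyset$ then $V$ is clopen, so $V=X$ by connectedness of $X$. This contradicts $V\subset X\setminus A$ with $A$ nonempty. Hence $\partial V\neq\emptyset$, and $h$ is well defined and agrees with $\widetilde u$ on $\partial V$.

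\emph{Avoiding degeneracy.} Fix $x\in V$. Since $V$ is open and disjoint from $A$, there is $r>0$ with $B_r(x)\subset V$, and therefore $B_r(x)\cap A=\emptyset$ and $\Lip(w,B_r(x))\le k<1$. Lemma~\ref{lem:gap} applied to $w$ gives $\sigma_w(x)\ge(1-k)r>0$. By the same lemma, $w(x)\in\interior_G\mathcal P(u,A,x)$, so this interior is nonempty and $x\notin D(u,A)$.

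The only step needing care is the pasting hypothesis on boundary agreement. The Kirszbraun extension agrees with $\widetilde u$ on $\partial V$, which is exactly what Lemma~\ref{lem:pasting}(i) requires, so I expect no serious obstacle.
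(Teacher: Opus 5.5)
Your proposal is correct and follows the paper's proof. The paper likewise Kirszbraun-extends $\widetilde u|_{\partial V}$ with constant $k$, pastes it into $\widetilde u$ via Lemma~\ref{lem:pasting}(i), and applies Lemma~\ref{lem:gap} on small balls inside $V$ to exclude degeneracy, after the same observation that $A\neq\emptyset$ forces $\partial V\neq\emptyset$.
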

\begin{proof}
Since $A\ne\emptyset$, the set $V$ is proper and
$\partial V\ne\emptyset$. Extend
$\widetilde u|_{\partial V}$ to a global $k$-Lipschitz map by
Kirszbraun's theorem and paste it with $\widetilde u$ outside $V$.
Lemma~\ref{lem:pasting} gives the global $1$-Lipschitz bound.
Every point of $V$ has a neighbourhood on which the new extension
is $k$-Lipschitz, so Lemma~\ref{lem:gap} excludes degeneracy there.
\end{proof}

\section{Finite additions with a uniform gap}\label{sec:finite-additions}

The construction of Section~\ref{sec:separable} will repeatedly
prescribe values at finitely many new points while preserving all
previously prescribed values. Its next step needs a positive
margin against the original datum on a compact region. We obtain
this margin by translating all new target values by the same
small vector and averaging finitely many compatible extensions.
The empty finite datum is allowed and will initialise the construction.

\begin{lemma}[Preserving a margin under finite additions]\label{lem:addition}
Let $X,G$ be Hilbert spaces with $G\ne\{0\}$, let
$A\subset X$ be nonempty and closed, and let
$u\colon A\to G$ be $1$-Lipschitz. Let $P\subset X$ be finite,
possibly empty, and let $a\colon P\to G$ be $1$-Lipschitz.
Suppose that for some $\delta>0$,
\begin{equation}\label{eq:cross-gap}
 \|a(p)-u(y)\|\leq\|p-y\|-\delta
 \quad(p\in P,\ y\in A),
\end{equation}
a condition that is vacuous when $P=\emptyset$.
For every nonempty compact set $K\subset X\setminus D(u,A)$,
the combined datum on $A\cup P$ has a global $1$-Lipschitz
extension $w\colon X\to G$ and a number $\gamma>0$ such that
\begin{equation*}
 \|w(x)-u(y)\|\leq\|x-y\|-\gamma
 \quad(x\in K,\ y\in A).
\end{equation*}
\end{lemma}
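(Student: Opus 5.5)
The plan is to reduce the uniform statement on $K$ to a pointwise one and then recover uniformity by compactness and convex averaging, using the margin function $\Phi=\Phi_{u,A}$ of Section~\ref{sec:admissible}.

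\emph{Averaging.} Suppose we have finitely many global $1$-Lipschitz extensions $w_1,\dots,w_m$ of the combined datum on $A\cup P$. Their average $w=\frac1m\sum_i w_i$ is again a $1$-Lipschitz extension of that datum, since it agrees with $u$ on $A$ and with $a$ on $P$. Convexity of $\Phi(x,\cdot)$ gives
\begin{equation*}
 \Phi\bigl(x,w(x)\bigr)\leq\frac1m\sum_{i=1}^m\Phi\bigl(x,w_i(x)\bigr).
\end{equation*}
Each term on the right is $\leq0$ because each $w_i$ extends $u$. Hence $\sigma_w\geq\frac1m\sigma_{w_i}$ for every $i$.

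\emph{Reduction.} By the previous paragraph it suffices to prove a pointwise claim: for every $x\in K$ there is an extension $w_x$ of the combined datum with $\sigma_{w_x}(x)>0$. Since $\sigma_{w_x}$ is $2$-Lipschitz (Lemma~\ref{lem:gap}), it stays positive on a ball around $x$. Compactness of $K$ gives finitely many such balls covering $K$. Averaging the corresponding $w_{x_i}$ then yields $\sigma_w\geq\gamma>0$ on $K$, where $\gamma$ is $\frac1m$ times the minimum of the local lower bounds. This is exactly the required inequality.

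\emph{Pointwise step.} Fix $x\in K\setminus P$; for $x\in P$ the hypothesis \eqref{eq:cross-gap} already gives margin $\delta$. Since $x\notin D(u,A)$, there are $p_x$ and $\eta>0$ with $\Phi(x,p_x)\leq-\eta$. By Kirszbraun's theorem, choose $q_0$ admissible at $x$ for the combined datum. Put $d=p_x-q_0$ and, for small $s>0$, consider the translated finite datum
\begin{equation*}
 a+sd\ \text{on }P,\qquad x\mapsto q_0+sd=(1-s)q_0+sp_x.
\end{equation*}
This datum is compatible on $P\cup\{x\}$, because a common translation preserves all mutual distances. It is compatible with $u$ whenever $s\|d\|\leq\delta$, by \eqref{eq:cross-gap}. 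At $x$, convexity of $\Phi(x,\cdot)$ gives margin at least $s\eta$ against $A$. The difficulty is to undo the translation on $P$, since $w$ must equal $a$ there, while keeping a net margin at $x$.

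\emph{Main obstacle.} Naive symmetric averaging of the translations $\pm sd$ returns the value $q_0$ at $x$ and loses the margin. What I would try first is to use an asymmetric family of translations $s_jd_j$ with $\sum_j\lambda_js_jd_j=0$. The directions $d_j=p_j-q_0$ would point toward several margin points $p_j$ of $\mathcal P(u,A,x)$ surrounding $q_0$. Convexity would then give margin $\geq s_j\eta$ for each $j$ at the translated $x$-value, and the convex combination of the $x$-values would lie strictly inside $\mathcal P(u,A,x)$. The surrounding points $p_j$ should be obtainable because $\mathcal P(u,A,x)$ has nonempty interior. Failing that, I would choose $q_0$ itself as a point of the combined admissible set closest to $p_x$. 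I would then show, by a reflection and parallelogram estimate as in Lemma~\ref{lem:forced-degeneracy}, that moving $q_0$ slightly toward $p_x$ violates no constraint from $P$ to first order, using the margin $\delta$ that each $a(p)$ has against $A$.

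I expect this step, keeping the values on $P$ fixed while gaining a margin at $x$, to be the whole difficulty. The reduction and averaging in the first two paragraphs are routine.
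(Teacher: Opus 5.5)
Your averaging-and-compactness reduction and your treatment of $x\in P$ match the paper. The gap is the pointwise step, which is the entire content of the lemma, and neither of your suggestions establishes it.

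\emph{The asymmetric family.} The averaged value at $x$ is $q_0+\sum_j\lambda_js_jd_j=q_0$, so nothing moves. Your convexity bound then reads $\Phi(x,q_0)\le-\eta\sum_j\lambda_js_j$. But the condition $\sum_j\lambda_js_j(p_j-q_0)=0$ says exactly that $q_0$ is a convex combination of points of margin $\eta$, i.e.\ that $q_0$ already had a margin. Write $B=\mathcal P(u,A,x)$. The only relevant case is $q_0\in B\setminus\interior_G B$. There a supporting functional at $q_0$ is strictly negative on every $p_j-q_0$ with $p_j\in\interior_G B$. So no ``surrounding'' family exists, however large the interior of $B$ is.

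\emph{The fallback.} Put $Q=\mathcal P(a,P,x)$, and let $q_0$ be the point of $B\cap Q$ nearest to $p_x$, with $q_0\ne p_x$. Then $[q_0,p_x]\subset B$, so every point of this segment beyond $q_0$ lies outside $Q$. Moving towards $p_x$ therefore necessarily breaks a constraint from $P$.

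\emph{The missing idea.} Translate only the values on $P$, and let Kirszbraun's theorem choose a fresh value at $x$; do not move the $x$-value rigidly with them. For $\|e\|\le\delta$, the datum equal to $u$ on $A$ and to $a+e$ on $P$ is $1$-Lipschitz by the gap hypothesis. Its extension has some value $z$ at $x$ with $z\in B$ and $z-e\in Q$. Thus $B\cap(Q+e)\ne\emptyset$, i.e.\ $\cl B_\delta(0)\subset B-Q$.

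Now take $e=-td=t(q_0-p_x)$, the direction opposite to yours, with $t>0$ small. There are $z\in B$ and $q\in Q$ with $z=q-td$, and
\[
 b=\frac{z+tp_x}{1+t}=\frac{q+tq_0}{1+t}.
\]
The first expression gives $b\in\interior_G B$, indeed $\Phi(x,b)\le-t\eta/(1+t)$ by convexity. The second gives $b\in Q$. So $b$ is admissible for the combined datum with positive margin against $u$. Kirszbraun's theorem then extends the combined datum together with the value $b$ at $x$.

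Equivalently, if $\interior_G B\cap Q$ were empty, a separating functional would place $B-Q$ in a closed half-space through $0$. This contradicts $\cl B_\delta(0)\subset B-Q$. This is the paper's argument, and your reduction then completes the proof.
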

\begin{proof}
Fix $x\notin D(u,A)$ and put $B=\mathcal P(u,A,x)$.
We first find a value in $\interior_G B$ compatible with the
new data. This is immediate when $P=\emptyset$.
Otherwise, put $Q=\mathcal P(a,P,x)$. For every $e\in G$
with $\|e\|\leq\delta$, the datum $u$ on $A$ and $a+e$
on $P$ is $1$-Lipschitz: translation preserves the new mutual
distances, and
\begin{equation*}
 \|a(p)+e-u(y)\|\leq\|p-y\|-\delta+\|e\|\leq\|p-y\|.
\end{equation*}
Kirszbraun's theorem therefore gives $B\cap(Q+e)\ne\emptyset$.
Choose $b_0\in\interior_G B$, $q_0\in Q$, and $t>0$ so
small that $e=t(q_0-b_0)$ has norm at most $\delta$.
There are $z\in B$ and $q\in Q$ with $z=q+e$, and
\begin{equation*}
 b=\frac{z+t b_0}{1+t}=\frac{q+t q_0}{1+t}
 \in\interior_G B\cap Q.
\end{equation*}
Thus in either case we may extend the combined datum and an
interior value at $x$ by Kirszbraun's theorem. Denote this
extension by $W_x$. Its margin against $u$ is positive at $x$,
and hence on a neighbourhood of $x$.

By compactness, finitely many maps $W_1,\ldots,W_m$ have
such neighbourhoods covering $K$. Their average $w$ extends the
combined datum and, by convexity of $\Phi$, satisfies
\begin{equation*}
 w=\frac1m\sum_{j=1}^mW_j,
 \qquad \sigma_w\geq\frac1m\sum_{j=1}^m\sigma_{W_j}>0
 \quad\text{on }K.
\end{equation*}
Take $\gamma=\min\{\sigma_w(x)\mid x\in K\}>0$.
\end{proof}

\section{Contact with the degeneracy set}\label{sec:hilbert-tangent}

Let $w$ be a global $1$-Lipschitz extension of $u$, and let
$\sigma_w$ be the margin of its actual values, as in
Lemma~\ref{lem:gap}. This section compares two bounds on
$\sigma_w$ near a point $z\in D(u,A)\setminus A$.
The first is an upper bound, quadratic in $\|x-z\|$: it comes
from the reflection argument in the proof of
Lemma~\ref{lem:forced-degeneracy}, since a larger margin of
$w(x)$ would make the midpoint of $w(x)$ and $w(2z-x)$ a value
with positive margin at $z$, contradicting degeneracy.
The second is a lower bound, linear in the radius: by
Lemma~\ref{lem:gap}, if $w$ has Lipschitz constant $k<1$ on a
ball $B_r(x)$ disjoint from $A$, then $\sigma_w(x)\geq(1-k)r$.
For $x$ close to $z$ and $r$ comparable to $\|x-z\|$, the linear
bound exceeds the quadratic one. Consequently, a component of
the complement of $D(u,A)$ on which $w$ is strictly contracting
cannot have a boundary point outside $A$
(Lemma~\ref{lem:contact}).

\begin{lemma}[Quadratic bound on the margin near a degenerate point]\label{lem:quadratic}
Let $X,G$ be Hilbert spaces with $G\ne\{0\}$, let
$A\subset X$ be nonempty and closed, and let $u\colon A\to G$
be $1$-Lipschitz. Every global $1$-Lipschitz extension $w$ of
$u$ satisfies
\begin{equation}\label{eq:quadratic-actual-margin}
 \sigma_w(x)\leq\frac{\|x-z\|^2}{\dist(z,A)}
 \qquad(x\in X,\ z\in D(u,A)\setminus A).
\end{equation}
\end{lemma}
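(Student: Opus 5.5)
The plan is to rerun the reflection argument from the proof of Lemma~\ref{lem:forced-degeneracy}, using the actual values of $w$ at $x$ and at the reflected point instead of an auxiliary interior value. Fix $z\in D(u,A)\setminus A$ and $x\in X$, and put $h=\|x-z\|$ and $d=\dist(z,A)$. Since $A$ is closed and $z\notin A$, we have $d>0$, so the right-hand side is finite. If $h=0$, the claim reads $\sigma_w(z)\leq0$. This holds because $z\in D(u,A)$ gives $\Phi(z,p)\geq0$ for every $p$, in particular for $p=w(z)$. So assume $h>0$, and suppose for contradiction that $\sigma:=\sigma_w(x)>h^2/d$.

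Put $x'=2z-x$ and $p=\tfrac12\bigl(w(x)+w(x')\bigr)$. For each $y\in A$ we use two bounds. The definition of $\sigma_w$ gives $\|w(x)-u(y)\|\leq\|x-y\|-\sigma$. The fact that $w$ is a $1$-Lipschitz extension of $u$ gives $\|w(x')-u(y)\|\leq\|x'-y\|$. Averaging these two bounds and applying the parallelogram identity in the source, exactly as in \eqref{eq:reflected-margin}, yields
\begin{equation*}
 \|p-u(y)\|\leq\frac{\|x-y\|+\|x'-y\|}{2}-\frac\sigma2
 \leq\sqrt{\|z-y\|^2+h^2}-\frac\sigma2 .
\end{equation*}

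The only step requiring care is making the resulting margin uniform in $y$. For this I use the elementary bound $\sqrt{s^2+h^2}\leq s+h^2/(2s)$ with $s=\|z-y\|\geq d$. It gives
\begin{equation*}
 \|p-u(y)\|\leq\|z-y\|+\frac{h^2}{2d}-\frac\sigma2
 \qquad(y\in A).
\end{equation*}
Hence $\Phi(z,p)\leq-\tfrac12(\sigma-h^2/d)<0$. By \eqref{eq:margin-interior}, $p$ then lies in $\interior_G\mathcal P(u,A,z)$, which contradicts $z\in D(u,A)$. Therefore $\sigma_w(x)\leq h^2/d$, which is \eqref{eq:quadratic-actual-margin}. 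The constant is not sharp; the argument even gives $\sigma_w(x)\leq h^2/d$ with room to spare, and that is all the statement asks for.
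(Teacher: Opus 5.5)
Your proposal is correct and follows the paper's own proof essentially step for step. Both take the midpoint of $w(x)$ and $w(2z-x)$, apply the reflected-margin estimate through the parallelogram identity, and use $\sqrt{s^2+h^2}\leq s+h^2/(2d)$ for $s\geq d$ to obtain a positive uniform margin at $z$, which contradicts degeneracy; your separate case $h=0$ is harmless but unnecessary, since the same computation covers it.
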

\begin{proof}
Put $p=(w(x)+w(2z-x))/2$. The reflected-value estimate
\eqref{eq:reflected-margin}, with margin $\sigma_w(x)$ at $x$,
gives, for $y\in A$ and $r=\|z-y\|$,
\begin{equation*}
 \|p-u(y)\|
 \leq\sqrt{r^2+\|x-z\|^2}-\frac{\sigma_w(x)}2
 \leq r+\frac{\|x-z\|^2}{2\dist(z,A)}-\frac{\sigma_w(x)}2.
\end{equation*}
If \eqref{eq:quadratic-actual-margin} failed, $p$ would have a
positive uniform margin at $z$, contradicting degeneracy.
\end{proof}

Near a contact point outside $A$ between $D(u,A)$ and a
contracting component, balls with radius equal to their distance
from $D(u,A)$ would supply a linear margin that the quadratic
estimate cannot accommodate.

\begin{lemma}[Contact with the degeneracy set]\label{lem:contact}
Let $X,G$ be Hilbert spaces with $G\ne\{0\}$, let
$A\subset X$ be nonempty and closed, let $u\colon A\to G$ be
$1$-Lipschitz. Let
$\Omega\subset X\setminus A$ be open with $\partial\Omega\subset A$,
and let $U$ be a connected component of $\Omega\setminus D(u,A)$.
If a global $1$-Lipschitz extension $w$ of $u$ satisfies
$\Lip(w,U)<1$, then
\begin{equation*}
 \partial U\cap\Omega=\emptyset.
\end{equation*}
\end{lemma}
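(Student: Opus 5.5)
We argue by contradiction, comparing the two bounds on the margin $\sigma_w$ described at the start of this section. Suppose $z\in\partial U\cap\Omega$ and put $k=\Lip(w,U)<1$. Since $U\subset\Omega\setminus D(u,A)$ and $U$ is a component of the open set $\Omega\setminus D(u,A)$, a boundary point of $U$ lying in $\Omega$ must belong to $D(u,A)$. Indeed, if $z\in\Omega\setminus D(u,A)$, then a small ball about $z$ lies in $\Omega\setminus D(u,A)$ and meets $U$, so by connectedness it lies in $U$, which contradicts $z\in\partial U$. Hence $z\in D(u,A)\cap\Omega$. Moreover $z\notin A$, since $\Omega\subset X\setminus A$. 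As $A$ is closed, $d:=\dist(z,A)>0$, and Lemma~\ref{lem:quadratic} applies at $z$.

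Now choose points $x\in U$ with $\|x-z\|=:s\to0$. Let $\rho(x)=\dist(x,X\setminus U)$ be the radius of the largest open ball about $x$ contained in $U$. On $B_{\rho(x)}(x)\subset U\subset X\setminus A$ the map $w$ is $k$-Lipschitz, so Lemma~\ref{lem:gap} gives $\sigma_w(x)\geq(1-k)\rho(x)$. Lemma~\ref{lem:quadratic} gives $\sigma_w(x)\leq s^2/d$. It therefore suffices to find $x\in U$ arbitrarily close to $z$ with $\rho(x)\geq c\,\|x-z\|$ for a fixed $c>0$. Then $(1-k)c\,s\leq s^2/d$, which fails once $s<(1-k)cd$.

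The main obstacle is producing such points. $U$ may approach $z$ through thin cusps, so $\rho(x)$ need not be comparable to $\|x-z\|$. We avoid this by replacing the fixed degenerate point $z$ with a nearest degenerate point. Fix $x\in U$ with $\|x-z\|=s$ small, so that $B_{2s}(z)\subset\Omega$ and $2s<d/2$. Every point of $X\setminus U$ at distance less than $s$ from $x$ lies in $B_{2s}(z)\subset\Omega$. By the argument of the first paragraph, any point of $\Omega\setminus U$ joined to $x$ by a segment inside $\Omega$ must meet $D(u,A)$ along that segment. Hence $\rho(x)=\dist(x,D(u,A))=:\rho\leq s$.

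In a Hilbert space the infimum $\rho$ need not be attained, so we pick $z'\in D(u,A)$ with $\|x-z'\|\leq\rho(1+\tau)$ for a small $\tau>0$. Then $z'\in B_{3s}(z)$, so $z'\notin A$ and $\dist(z',A)\geq d/2$. Applying Lemma~\ref{lem:quadratic} at $z'$ instead of $z$ gives
\begin{equation*}
 (1-k)\rho\leq\sigma_w(x)\leq\frac{2\rho^2(1+\tau)^2}{d},
\end{equation*}
that is, $(1-k)d\leq2\rho(1+\tau)^2\leq2s(1+\tau)^2$. This is impossible for $s$ small. Hence $\partial U\cap\Omega=\emptyset$.
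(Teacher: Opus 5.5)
Your argument is correct and is essentially the paper's proof. Both replace the fixed contact point by an almost-nearest degenerate point $z'$ to $x\in U$ (the paper uses the factor $2$ where you use $1+\tau$). Both use $B_{\dist(x,D(u,A))}(x)\subset U$ to get the linear lower bound $(1-k)\dist(x,D(u,A))\leq\sigma_w(x)$ from Lemma~\ref{lem:gap}, and play it against the quadratic bound of Lemma~\ref{lem:quadratic} at $z'$.

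There is one harmless slip in the constants. The condition $2s<d/2$ only gives $\dist(z',A)>d-3s>d/4$ for $z'\in B_{3s}(z)$, not $\dist(z',A)\geq d/2$. You should either require $3s\leq d/2$ or carry the constant $d/4$; neither changes the final contradiction.
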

\begin{proof}
Put $k=\Lip(w,U)<1$ and suppose $q\in\partial U\cap\Omega$.
Then $q\in D(u,A)\setminus A$. Choose $x_n\in U$ tending to $q$,
put $r_n=\dist(x_n,D(u,A))>0$, and take $z_n\in D(u,A)$ with
$\|x_n-z_n\|<2r_n$. We have $r_n\to0$ and $z_n\to q$.
The connected ball $B_{r_n}(x_n)$ contains $x_n$ and avoids
$D(u,A)\supset\partial\Omega$, so it lies in $U$.
For all sufficiently large $n$, Lemma~\ref{lem:gap} and
\eqref{eq:quadratic-actual-margin} give
\begin{equation*}
 (1-k)r_n\leq\sigma_w(x_n)
 <\frac{4r_n^2}{\dist(z_n,A)}.
\end{equation*}
Dividing by $r_n$ contradicts $r_n\to0$ and
$\dist(z_n,A)\to\dist(q,A)>0$.
\end{proof}

\section{Polyhedral approximation in separable sources}\label{sec:separable}

In this section we present the construction that proves
Theorem~\ref{thm:domain} for separable sources. Starting from a
given $1$-Lipschitz extension, it produces a new extension that
is close to the given one on a prescribed compact set and whose
values, in every component of $X\setminus D(u,A)$, are affine
isometric simplex by simplex on nested connected polyhedra with
dense union. The construction proceeds in finitely many steps at a
time: each step prescribes the values at the vertices of finitely
many new simplices, and values already prescribed are never
changed. By affine rigidity (Lemma~\ref{lem:leaves}), fixing the
finitely many vertex values is enough to make every later
extension affine isometric on the simplex, and
Lemma~\ref{lem:addition} preserves a positive margin against the
original datum, so that a further step is always possible.

We first explain why extensions of this form admit boundary
comparison. We then isolate the finite enlargement step and
iterate it. Throughout,
interiors of polyhedra are taken in their finite-dimensional source
subspaces; they need not be open in $X$.

\subsection{Why two isometric directions suffice}

We show that a closed set on which the map is strictly
contracting cannot separate a polyhedron whose simplices carry
affine isometric values in two directions. The reason is
topological: removing one point does not disconnect an open
convex subset of a plane. Orthogonal projection applies this to
all parallel planes at once; pasting across facets passes from a
simplex to a polyhedron.

\begin{lemma}[Nonseparation by sparse fibres]\label{lem:fibre-nonseparation}
Let $H$ be a Hilbert space, let $W\subset H$ be two-dimensional,
and let $O\subset H$ be nonempty, open and connected.
If $S\subset O$ is relatively closed and meets each affine plane
parallel to $W$ in at most one point, then $O\setminus S$ is
connected and dense in $O$.
\end{lemma}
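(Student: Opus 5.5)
Density comes first. Let $x\in O$ and $r>0$ with $B_r(x)\subset O$. The planar disc $(x+W)\cap B_r(x)$ contains infinitely many points, and at most one of them lies in $S$, so $B_r(x)\not\subset S$. Thus $S$ has empty interior, and $O\setminus S$ is dense in $O$. It is also open, because $S$ is relatively closed in $O$.

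For connectedness, I would fix $a,b\in O\setminus S$ and join them by a polygonal path $a=c_0,c_1,\dots,c_m=b$ in $O$. Components of open sets are polygonally connected, so such a path exists. It then suffices to show that points of $O\setminus S$ which are close to one another along the path lie in the same component of $O\setminus S$. I would argue locally: cover the path by finitely many balls $B_{\rho}(c)\subset O$. Since $O\setminus S$ is dense, I can choose consecutive points of $O\setminus S$ lying in overlapping balls. Then I show that, for a convex open set $C=B_\rho(c)\subset O$, the set $C\setminus S$ is connected. Chaining through the overlaps finishes the argument, because two consecutive chosen points both lie in some common ball $C$.

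The main step is the connectedness of $C\setminus S$ for a convex open $C$. Take $p,q\in C\setminus S$ and write $H=W\oplus W^\perp$. Consider the family of planes $\Pi_t=(1-t)p+tq+W$ for $t\in[0,1]$. Each $\Pi_t\cap C$ is a nonempty open convex subset of a plane, which is homeomorphic to an open disc. By hypothesis it contains at most one point of $S$. Removing a point from a convex open planar set leaves it path-connected, since one can go around the point in two dimensions. Hence each $(\Pi_t\cap C)\setminus S$ is path-connected.

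To pass between neighbouring planes, I would use a continuous section of the projection onto $W^\perp$. Choose a unit vector $e\in W$ and $\tau>0$ small enough that the points $s(t)=(1-t)p+tq+\theta e$ remain in $C$ for $|\theta|\leq\tau$. On each plane $\Pi_t$ there is at most one point of $S$, so for each $t$ at least one of the two transversal segments $\{(1-t)p+tq+\theta e\mid 0<\theta\leq\tau\}$ and $\{\dots\mid -\tau\leq\theta<0\}$ avoids $S$. More robustly, I would take finitely many directions $e_1,e_2,e_3\in W$ and short segments. Since $S$ is closed in $C$, the set of $t$ for which a given segment $[(1-t)p+tq,\,(1-t)p+tq+\tau e_j]$ avoids $S$ is open in $[0,1]$.

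I expect this step to be the obstacle. The segment $[p,q]$ itself may hit $S$ at infinitely many parameters (at most once per plane, but that is one point for each $t$). So I will instead build a path as follows. For each $t$, pick $j(t)$ such that the point $(1-t)p+tq+\tau_t e_{j(t)}$ lies off $S$. By openness this choice works on an interval around $t$. Compactness of $[0,1]$ then yields a finite subdivision $0=t_0<\dots<t_N=1$ in which consecutive intervals overlap, together with straight paths $t\mapsto(1-t)p+tq+\tau e_j$ that avoid $S$ on each piece. At the overlap parameters, the two offset points lie on the same plane $\Pi_t$, and I connect them inside $(\Pi_t\cap C)\setminus S$ by planar path-connectedness. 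At the ends, $p$ and $q$ are joined to their offsets inside their own planes, again by planar connectedness. The delicate point to verify is that a straight offset path avoids $S$ on a whole interval. I would ensure this by taking the offset path to be the set of points where $S$ is avoided, which is open along the path, and by shrinking $\tau$ if needed. Alternatively, I would note that the compact offset segment, being disjoint from $S$ at one parameter, is disjoint at nearby ones only if $S$ stays away from it, and I would secure that by choosing the offset at a positive distance from $S$ using closedness.
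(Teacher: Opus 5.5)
Your argument is correct. Its outer structure matches the paper's: density because every ball meets a plane parallel to $W$ in a disc, connectedness first for a ball, then a chain of overlapping balls along a polygonal path. The two proofs differ in the ball step.

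The paper argues by contradiction. The map $P_{W^\perp}$ restricted to $C\setminus S$ is an open surjection onto the convex set $P_{W^\perp}(C)$, and its fibres are discs minus at most one point, hence connected. A splitting of $C\setminus S$ into two disjoint nonempty open sets would therefore project to a splitting of $P_{W^\perp}(C)$, which is impossible.

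You instead build an explicit path from $p$ to $q$. You follow translates of $[p,q]$ that avoid $S$, switch between translates inside single punctured fibres, and use compactness of $[0,1]$ to get finitely many switches. This is the same mechanism unpacked. The translates are local sections of the projection, which is what its openness encodes, and compactness replaces the clopen argument. Your version gives path-connectedness directly; the paper's is shorter and involves no choices.

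The step you flag as delicate is immediate, and the hedges in your last paragraph can be dropped. For any $t_0$ and any $x_0\in(\Pi_{t_0}\cap C)\setminus S$, the set of $s\in[0,1]$ with $x_0+(s-t_0)(q-p)\in C\setminus S$ is open, since it is the preimage of the open set $C\setminus S$ under a continuous map. So you need no fixed offset $\tau$ and no list of directions $e_1,e_2,e_3$. You also need no transversal segments, which contain $(1-t)p+tq$ and may meet $S$.
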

\begin{proof}
A ball meets some affine plane parallel to $W$ in a disk, so $S$
contains no ball; this gives density.

First let $O$ be a ball. Since $S$ is relatively closed,
$O\setminus S$ is open, and the projection
\begin{equation*}
 P_{W^\perp}\colon O\setminus S\longrightarrow P_{W^\perp}(O)
\end{equation*}
is an open surjection. Its fibres are open disks with at most one
point removed, so they are nonempty and connected. Its image is
convex, hence connected. If $O\setminus S$ were the union of two
disjoint nonempty open sets, their projections would be disjoint
nonempty open sets covering $P_{W^\perp}(O)$: connectedness puts
each fibre entirely on one side, and openness of $P_{W^\perp}$
makes both images open. This proves connectedness for a ball.

For general $O$, join two points of $O\setminus S$ by a finite
chain of balls contained in $O$ in which consecutive balls
overlap. Each ball with $S$ removed is connected by the first
part, and consecutive ones have overlapping complements of $S$,
since $S$ contains no ball. Hence $O\setminus S$ is connected.
\end{proof}

\begin{lemma}[Polyhedral nonseparation]\label{lem:nonseparation}
Let $E$ be an $m$-dimensional Euclidean space, where $m\geq2$,
let $Y$ be a Hilbert space, and let $T\subset E$ be a compact
polyhedral set with nonempty connected interior
$O=\operatorname{int}_E T$ and $T=\cl O$.
Suppose that $w\colon T\to Y$ is continuous and piecewise
affine, and that its linear part on every full-dimensional piece
is $1$-Lipschitz and isometric on some two-dimensional subspace
of $E$. If $S\subset E$ is closed and
$\Lip(w,S\cap T)<1$, then $O\setminus S$ is connected
and dense in $T$.
\end{lemma}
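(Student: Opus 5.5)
The plan is to apply Lemma~\ref{lem:fibre-nonseparation} simplex by simplex and then glue the pieces across facets. Fix a triangulation of $T$ on which $w$ is affine. Since $T=\cl O$, the full-dimensional simplices $\Delta_1,\dots,\Delta_N$ cover $T$.

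For each $j$, let $W_j\subset E$ be a two-dimensional subspace on which the linear part $L_j$ of $w|_{\Delta_j}$ is isometric. The first step is to check that $S\cap\Delta_j$ meets every affine plane $x+W_j$ in at most one point. Suppose $a\ne b$ both lie in $S\cap\Delta_j\cap(x+W_j)$. Then $b-a\in W_j$, so
\begin{equation*}
 \|w(b)-w(a)\|=\|L_j(b-a)\|=\|b-a\|.
\end{equation*}
This contradicts $\Lip(w,S\cap T)<1$.

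Now apply Lemma~\ref{lem:fibre-nonseparation} in $H=E$ with $W=W_j$, $O=\operatorname{int}_E\Delta_j$ (open, convex, nonempty) and the relatively closed set $S\cap\operatorname{int}_E\Delta_j$. It gives that $\operatorname{int}_E\Delta_j\setminus S$ is connected and dense in $\Delta_j$. Taking the union over $j$ shows that $O\setminus S$ is dense in $T$.

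For connectedness, build a graph on the indices, joining $i$ and $j$ when $\Delta_i\cap\Delta_j$ is a common facet $F$ whose relative interior meets $O$. I will use a standard fact: since $O$ is a connected open polyhedral set, it minus the codimension-$\geq2$ skeleton is still connected, because removing a finite union of affine pieces of codimension at least two does not disconnect an open connected subset of $E$ when $m\geq2$. A path in $O$ can therefore be perturbed to cross only relative interiors of facets, which makes the graph connected.

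The main obstacle is crossing a facet $F$ between adjacent $\Delta_i$ and $\Delta_j$. It suffices to find a point $f\in\operatorname{relint}F\setminus S$, since a small ball around $f$ lies in $\operatorname{int}_E(\Delta_i\cup\Delta_j)$ and avoids the closed set $S$, so it joins the two connected pieces $\operatorname{int}_E\Delta_i\setminus S$ and $\operatorname{int}_E\Delta_j\setminus S$. Such an $f$ exists because $S\cap F$ has empty relative interior in $F$. To see this, note that $W_i\cap\operatorname{Lin}F$ has dimension at least $1$ because $\dim\operatorname{Lin}F=m-1$ and $\dim W_i=2$. So $F$ contains segments in a direction on which $w$ preserves length, and an open subset of $F$ contained in $S$ would contain two points contradicting $\Lip(w,S\cap T)<1$.

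Chaining along the connected graph then shows that $O\setminus S$ is connected. In the case $m=2$ the facets are edges and $W_i=E$, so the same argument applies directly.
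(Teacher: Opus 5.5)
Your proposal is correct and follows essentially the same route as the paper: Lemma~\ref{lem:fibre-nonseparation} on the interior of each full-dimensional simplex, a perturbed polygonal path to link simplices through shared facets, and a point in the relative interior of each shared facet outside $S$, found via a line in $W\cap\Span(F-F)$, whose small neighbourhood joins the two pieces. The only step to make explicit is the last one: your chained set lies in $O\setminus S$ and is dense in it, so $O\setminus S$, which also contains points on lower-dimensional faces that the chaining never reaches, is connected because it lies between a connected set and that set's closure; this is exactly how the paper concludes.
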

\begin{proof}
Take a finite triangulation subordinate to the affine pieces.
Its full-dimensional simplices cover $T$, since $T=\cl O$.
On one such simplex $\Delta$, choose a two-dimensional subspace
$W$ on which the linear part is isometric. Two distinct points
of $S\cap\Delta$ cannot differ by a vector in $W$, because
their image distance would equal their source distance.
Lemma~\ref{lem:fibre-nonseparation} therefore shows that
\begin{equation*}
 A_\Delta=\operatorname{int}_E\Delta\setminus S
\end{equation*}
is connected and dense in $\Delta$.

The full-dimensional simplices can be joined through shared
facets. Indeed, a polygonal path in the connected open set $O$
can be perturbed slightly to avoid all faces of codimension at
least two and cross the remaining faces transversely. It then
passes through a finite sequence of full-dimensional simplices,
consecutive ones sharing a facet.

Consider such a facet $F$ between $\Delta$ and $\Delta'$.
For the isometric plane $W$ belonging to $\Delta$, the
intersection $W\cap\Span(F-F)$ contains a line.
Thus $\operatorname{int}_{\operatorname{Aff}F}F$ cannot lie in
$S$: it contains a nontrivial segment parallel to that line,
on which $w$ preserves distances. Choose a point $p$ of this
relative interior outside $S$. Since $F$ is shared, $p\in O$.
A small ball about $p$ therefore lies in $O\setminus S$ and
meets both $A_\Delta$ and $A_{\Delta'}$.
These balls join the sets $A_\Delta$ into a connected subset of
$O\setminus S$. This subset is dense in $O$, because each
$A_\Delta$ is dense in $\Delta$. Its relative closure is
therefore all of $O\setminus S$, which is connected and dense
in $T$.
\end{proof}

\subsection{One finite enlargement step}

We now assume that $X$ is separable and $\dim X\geq2$.
Brehm's theorem constructs isometric pieces only when the target
has enough dimensions. We therefore use an auxiliary Hilbert
target $G$ and a map $\pi\colon G\to Y$ as follows:
\begin{enumerate}
\item If $\dim Y\geq\dim X$, take $G=Y$ and $\pi=\Id_Y$.
\item If $2\leq\dim Y<\dim X$, enlarge $Y$ to a Hilbert space
      $G$ with $\dim G=\dim X$, and let $\pi=P_Y$.
\end{enumerate}
Since $X$ is separable, $Y$ is finite-dimensional in the second
case. One may take $G=Y\oplus\R^{\dim X-\dim Y}$ when $X$
is finite-dimensional and $G=Y\oplus\ell^2$ otherwise.
In the first case $Y$ need not be separable.

The enlargement is useful only if the eventual projection retains
two isometric directions. We ensure more: in the second case,
every full-dimensional affine piece maps its source space
isometrically onto a finite-dimensional subspace $G'\supset Y$.
After projection its linear part is a coisometry onto $Y$, and
is therefore isometric on a subspace of dimension $\dim Y\geq2$.

The next lemma is the induction step. A finite datum records all
values that must be retained, and a finite union of polytopes records
the region to be enclosed. We first choose finitely many new
points, then the finite-dimensional source and target, and only then
a polyhedral neighbourhood. This order avoids enlarging a polyhedron
after it has been constructed.

\begin{lemma}[Finite polyhedral enlargement]\label{lem:polyhedral-step}
Let $A\subset X$ be nonempty and closed, let
$u\colon A\to G$ be $1$-Lipschitz.
Let $P\subset X$ be nonempty and finite, and let
$a\colon P\to G$ be $1$-Lipschitz with
\begin{equation*}
 \|a(p)-u(y)\|\leq\|p-y\|-\delta
 \quad\text{for }p\in P,\ y\in A
\end{equation*}
for some $\delta>0$. Let $K\subset X\setminus D(u,A)$ be a finite
union of polytopes containing $P$.
There are a finite set $P'\supset P$, a $1$-Lipschitz datum
$a'\colon P'\to G$ extending $a$ with a positive uniform gap
against $u$, and a finite-dimensional linear subspace $E\subset X$
with $\dim E\geq2$ having the following property.
For each component $U$ of $X\setminus D(u,A)$ meeting $K$,
there is a compact polyhedral set $T_U\subset E\cap U$ such that
\begin{equation*}
 K\cap U\subset\operatorname{int}_E T_U,
 \qquad T_U=\cl\bigl(\operatorname{int}_E T_U\bigr).
\end{equation*}
Its interior is connected, and $P'\subset\bigcup_U T_U$.
The datum $a'$ fixes affine isometric
values on every simplex of a finite triangulation of $T_U$.
On each full-dimensional simplex, their composition with $\pi$
has an isometric linear part in the first target case and a
coisometric linear part onto $Y$ in the second.
\end{lemma}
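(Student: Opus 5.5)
Throughout, $\pi$ is applied after the whole construction, so I build everything in $G$ and read off the two target cases at the end. The order is the one the text prescribes: first the new points and their values, then the finite-dimensional subspaces, then the polyhedra.

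\emph{Step 1: one global extension with a margin.} Components of the open set $X\setminus D(u,A)$ are open. Only finitely many of them meet the compact set $K$, because these components cover $K$ and are disjoint and open. For each such $U$, the set $K\cap U$ is compact: it is relatively clopen in $K$, since $K\setminus U$ is covered by the other open components. Inside each such $U$, I join the finitely many points of $P\cap U$ and the vertices of the polytopes of $K$ lying in $U$ by finitely many polygonal paths in $U$. I then choose a compact set $K_1\subset X\setminus D(u,A)$ containing $K$ and all these paths, together with a closed $\rho$-neighbourhood of them for some small $\rho$. Lemma~\ref{lem:addition} applied to $(P,a)$ and $K_1$ gives a global $1$-Lipschitz extension $w$ of $u\cup a$ and a constant $\gamma>0$ with $\sigma_w\geq\gamma$ on $K_1$.

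\emph{Step 2: fix $E$ and the target piece.} Let $E$ be the span of all vertices of the polytopes of $K$, of $P$, of the path vertices, and of two extra vectors, so that $\dim E\geq2$. In each $U$, I take $T_U$ to be a union of small cubes of a fine grid in $E$ covering $(K\cup\text{paths})\cap U$. If the mesh is below $\rho$ and below $\operatorname{dist}$ to $D(u,A)$, then $T_U\subset E\cap U$, $T_U=\cl(\operatorname{int}_ET_U)$, $K\cap U\subset\operatorname{int}_ET_U$, and the interior is connected because the paths are covered.

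Next choose a finite-dimensional $G'\subset G$. In the first case $G'$ should contain $a(P)$ and $w$ of all grid vertices, with $\dim G'\geq\dim E$. In the second case $G'$ should also contain $Y$, and $\dim G'=\dim E$ after enlarging $E$ if needed; this is possible because $\dim G=\dim X$. Projecting $w$ onto $G'$, after a translation keeping $u(A)$ inside the closed span, does not decrease margins: we have $\Phi(x,P_{G'}p)\leq\Phi(x,p)$ provided $u(A)\subset G'$.

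\emph{This is the main obstacle.} $u(A)$ need not lie in a finite-dimensional space, so the projection does not directly preserve the gap against $u$. I would handle it by shrinking. Brehm only needs the finite datum's $1$-Lipschitz property on $E$. The gap against $u$ must be checked separately, and the margin $\gamma$ absorbs the error: any new value within distance $\gamma/2$ of $w(x)$ at $x\in K_1$ keeps margin $\gamma/2$.

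\emph{Step 3: Brehm interpolation with small oscillation.} I apply Theorem~\ref{thm:brehm}, in its equal-dimension form between $E$ and a subspace of $G'$ of dimension $\dim E$ containing $Y$ in case two, to the finite datum $a$ together with the values of $w$ at the grid vertices; this datum is $1$-Lipschitz. The result is an extension that is affine isometric onto $G'$ on each simplex of a locally finite triangulation. Its values, however, could be far from $w$. To control this, I take the grid so fine that every Brehm value at a point $x\in T_U$ is within $\operatorname{diam}$ (cell) $+$ oscillation of $w$ of the nearest grid vertex value. This holds because the extension is $1$-Lipschitz and agrees with $w$ at the vertices, so the distance to $w(x)$ is at most $2\cdot$mesh $<\gamma/2$. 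Hence the uniform gap against $u$ persists on $T_U$.

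Now let $P'$ be $P$ together with all vertices of a finite triangulation of $\bigcup_U T_U$ subordinate to the Brehm pieces, as explained after Theorem~\ref{thm:brehm}, and let $a'$ be the corresponding values. By Lemma~\ref{lem:leaves} (affine rigidity), $a'$ fixes affine isometric values on every simplex. On full-dimensional simplices the linear part is an isometry $E\to G'$. In the first case it remains isometric after $\pi=\Id$. In the second case it is onto $G'\supset Y$, so $P_Y$ composed with it is a coisometry onto $Y$. Finally, the gap of $a'$ against $u$ is positive, at least $\gamma/2$.
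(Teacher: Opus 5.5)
Your plan has a genuine gap in the projection case with infinite-dimensional source, $2\le\dim Y<\dim X=\infty$ and $G=Y\oplus\ell^2$, which the lemma must cover.

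\textbf{The circular ordering.} You require $G'$ to contain $w$ at every vertex of the grid cubes forming $T_U$, and also $\dim G'=\dim E$. The cubes must be full-dimensional in $E$, since otherwise $T_U=\cl(\operatorname{int}_ET_U)$ and $K\cap U\subset\operatorname{int}_ET_U$ fail. So a single cube already has $2^{\dim E}>\dim E$ vertices, and the values of an arbitrary extension $w$ there need not lie in any subspace of dimension $\dim E$.

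``Enlarging $E$ if needed'' is circular. The grid must then be rebuilt in the larger $E$, which creates new vertices whose $w$-values again need not lie in $G'$. In effect you fix the polyhedra before the target subspace, the opposite of the order you announce.

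Your $K_1$ has the same defect. A closed $\rho$-neighbourhood in $X$ is not compact when $\dim X=\infty$, so Lemma~\ref{lem:addition} does not apply to it. Taking the neighbourhood inside $E$ instead requires knowing the final $E$, and that depends on $w$.

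The obstacle you single out, projecting $w$ when $u(A)$ is not finite-dimensional, is not the real difficulty; no projection is needed.

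\textbf{The missing idea: separate the interpolation nodes from the final polyhedron.} Proceed in this order:
\begin{enumerate}
\item Apply Lemma~\ref{lem:addition} only to the compact polyhedral set $H=\bigcup_UH_U$ formed by $K$ and the joining paths, obtaining $w$ and $\gamma$.
\item Take $4\eta<\gamma$ and a finite $\eta$-net $Z\subset H$ containing $P$.
\item Choose $G'\supset w(Z)$, and $G'\supset Y$ in the projection case.
\item Choose $E\supset H$ with $\dim E=\dim G'$, or $\dim E\le\dim G'$ where that suffices.
\item Only then set $T_U=H_U+C$, where $C\subset E$ is an origin-centred cube whose points have norm below $\eta$.
\end{enumerate}

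Brehm's map $b\colon E\to G'$ then interpolates only $w|_Z$. The gap at $x\in T_U$ comes through a net point $z$ with $\|x-z\|\le2\eta$, using the margin of $w$ at $z$ rather than at $x$:
\begin{equation*}
 \|b(x)-u(y)\|\le\|x-z\|+\|w(z)-u(y)\|
 \le\|x-y\|+2\|x-z\|-\gamma\le\|x-y\|-(\gamma-4\eta).
\end{equation*}
So no margin of $w$ is needed off $H$, and $G'$ only has to contain finitely many values fixed in advance.

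The Minkowski-sum form of $T_U$ also gives $K\cap U\subset\operatorname{int}_ET_U$ and a connected interior directly. A union of grid cubes that merely covers the paths need not contain them in its interior, nor have connected interior.
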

\begin{proof}
The compact set $K$ meets only finitely many components $U$.
In each one, join the constituent polytopes of $K\cap U$ by
finitely many polygonal paths, obtaining a connected compact
polyhedral set $H_U\subset U$. Put $H=\bigcup_U H_U$.
By Lemma~\ref{lem:addition}, the combined datum on $A\cup P$
has a global $1$-Lipschitz extension $w$ and a constant $\gamma>0$
such that
\begin{equation*}
 \|w(x)-u(y)\|\leq\|x-y\|-\gamma
 \quad\text{for }x\in H,\ y\in A.
\end{equation*}
Choose $\eta>0$ with $4\eta<\gamma$ so that the
$\eta$-neighbourhood of each $H_U$ lies in $U$, and take a finite
$\eta$-net $Z\subset H$ containing $P$.

Choose finite-dimensional spaces $E\subset X$ and $G'\subset G$
containing $H$ and $w(Z)$, respectively, with
$\dim G'\geq\dim E\geq2$. In the projection case require
$Y\subset G'$ and $\dim G'=\dim E$.
If $X$ is finite-dimensional, take $E=X$ and, in the projection
case, $G'=G$. If $X$ is infinite-dimensional, first choose $G'$
containing $w(Z)$, and also $Y$ in the projection case, of dimension
at least $\max\{2,\dim\Span H\}$; then choose $E$
containing $H$ of that same dimension.

Choose an origin-centred full-dimensional closed cube $C\subset E$
whose points have norm less than $\eta$, and put $T_U=H_U+C$.
These Minkowski sums are finite unions of polytopes and lie
within distance $\eta$ of $H_U$. The connected set
$H_U+\operatorname{int}_E C$ contains $H_U$, lies in
$\operatorname{int}_E T_U$, and is dense in $T_U$.
Thus $T_U$ has connected interior and is its closure, as required.
Every point of $T=\bigcup_U T_U$ is within $2\eta$ of $Z$.

Brehm's theorem, including its unequal-dimension form, gives a
$1$-Lipschitz map $b\colon E\to G'$ interpolating $w|_Z$ and
affine isometric on the simplices of a finite triangulation of $T$.
For $x\in T$, choose $z\in Z$ with $\|x-z\|\leq2\eta$.
Then, for every $y\in A$,
\begin{equation*}
 \begin{aligned}
 \|b(x)-u(y)\|
 &\leq\|x-z\|+\|w(z)-u(y)\|\\
 &\leq\|x-z\|+\|z-y\|-\gamma
 \leq\|x-y\|-(\gamma-4\eta).
 \end{aligned}
\end{equation*}
Adjoin the triangulation vertices to $P$ and prescribe their
$b$-values; see Figure~\ref{fig:enlargement}. Since $P\subset Z$, this gives $P'$ and $a'$ with
unchanged old values and gap $\gamma-4\eta>0$.
Affine rigidity (Lemma~\ref{lem:leaves}) fixes $b$ on every
simplex from its vertex values.
In the projection case the full-dimensional linear parts map $E$
isometrically onto $G'\supset Y$, so composing with $\pi$ gives
coisometries onto $Y$.
\end{proof}

\begin{figure}[t]
\centering
\begin{tikzpicture}[scale=1.05]
 \def\blob{plot[smooth cycle, tension=0.8] coordinates
   {(0,0) (2.5,-0.7) (5.4,0) (6.6,2.2) (5.4,4.2) (2.2,4.7) (-0.3,3.1) (-0.9,1.4)}}
 \draw[very thick] \blob;
 \node[anchor=west] at (6.55,3.1) {$A$};
 \begin{scope}
  \clip \blob;
  \draw[very thick] plot[smooth, tension=0.9] coordinates
    {(1.6,-1.2) (2.0,1.2) (2.6,2.4) (2.3,3.6) (2.0,5.2)};
 \end{scope}
 \node at (1.6,2.95) {$D(u,A)$};
 \node at (3.1,3.9) {$U$};
 \fill[gray!22] (2.450,0.550) -- (3.750,0.550) -- (3.750,1.150) -- (4.450,1.150) -- (4.450,2.350) -- (5.850,2.350) -- (5.850,2.850) -- (5.350,3.750) -- (4.850,3.750) -- (4.517,3.150) -- (3.950,3.150) -- (3.950,1.650) -- (3.750,1.650) -- (3.750,1.850) -- (2.450,1.850) -- cycle;
 \draw[thin] (2.450,0.550) -- (3.750,0.550) -- (3.750,1.150) -- (4.450,1.150) -- (4.450,2.350) -- (5.850,2.350) -- (5.850,2.850) -- (5.350,3.750) -- (4.850,3.750) -- (4.517,3.150) -- (3.950,3.150) -- (3.950,1.650) -- (3.750,1.650) -- (3.750,1.850) -- (2.450,1.850) -- cycle;
 \draw[thin] (2.450,0.550) -- (2.700,0.800) (3.750,0.550) -- (3.500,0.800) (2.450,1.850) -- (2.700,1.600) (3.750,1.850) -- (3.500,1.600) (2.700,0.800) -- (3.500,1.600) (3.500,1.400) -- (3.750,1.150) (3.500,1.400) -- (3.750,1.650) (4.200,1.400) -- (4.450,1.150) (4.200,1.400) -- (3.950,1.650) (3.950,1.650) -- (4.450,2.350) (4.200,2.900) -- (3.950,3.150) (4.200,2.900) -- (4.450,2.350) (4.200,2.900) -- (4.517,3.150) (4.800,2.900) -- (4.600,2.600) (4.800,2.900) -- (5.100,3.500) (4.600,2.600) -- (4.450,2.350) (5.600,2.600) -- (5.850,2.350) (5.600,2.600) -- (5.850,2.850) (5.100,3.500) -- (4.850,3.750) (5.100,3.500) -- (5.350,3.750) (4.600,2.600) -- (4.517,3.150) (5.600,2.600) -- (5.100,3.500) (4.800,2.900) -- (5.600,2.600);
 \fill[gray!55] (2.700,0.800) -- (3.500,0.800) -- (3.500,1.600) -- (2.700,1.600) -- cycle; \draw[thin] (2.700,0.800) -- (3.500,0.800) -- (3.500,1.600) -- (2.700,1.600) -- cycle;
 \fill[gray!55] (4.600,2.600) -- (5.600,2.600) -- (5.100,3.500) -- cycle; \draw[thin] (4.600,2.600) -- (5.600,2.600) -- (5.100,3.500) -- cycle;
 \draw[thick] (3.500,1.400) -- (4.200,1.400) -- (4.200,2.900) -- (4.800,2.900);
 \foreach \p in {(3.500,1.400),(3.950,1.400),(4.200,1.750),(4.200,2.200),(4.200,2.650),(4.500,2.900),(4.800,2.900),(2.900,1.000),(3.300,1.000),(2.900,1.450),(3.300,1.450),(4.850,2.800),(5.350,2.800),(5.100,3.250)} \fill \p circle (0.045);
 \node at (3.100,0.300) {$K$}; \node at (5.600,3.550) {$K$};
 \node[anchor=east] at (3.900,2.400) {$H_U$};
 \node at (4.800,0.900) {$T_U$};
\end{tikzpicture}
\caption{One enlargement step (Lemma~\ref{lem:polyhedral-step})
inside a component $U$ of $X\setminus D(u,A)$. The polytopes of
$K\cap U$ (dark) are joined by a polygonal path into the connected
polyhedral set $H_U$. The dots form the finite $\eta$-net $Z$,
which contains $P$; only the values of the margin extension $w$ at
these points are kept. The polyhedron $T_U=H_U+C$ (light) is the
thickening of $H_U$ by a small cube; Brehm's theorem supplies
values that are affine isometric on the simplices of a finite
triangulation of $T_U$, and the vertices of this triangulation,
with these values, form the new datum $a'$.}
\label{fig:enlargement}
\end{figure}

\subsection{Iteration and approximation}

Lemma~\ref{lem:polyhedral-step} separates the two tasks in the
construction. It creates the required geometry while preserving
all finite data and a positive gap. We use a dense sequence to
make that geometry fill each component, and a separate finite set
of fixed values to obtain approximation on the prescribed compact
set. No estimate on the change from one whole extension to the
next is needed.

\begin{proposition}[Polyhedral approximation]\label{prop:skeleton}
Let $A\subset X$ be nonempty and closed, let
$u\colon A\to G$ be $1$-Lipschitz.
Given a global $1$-Lipschitz extension $v_0\colon X\to G$
of $u$, a nonempty compact set $Q\subset X$, and $\varepsilon>0$,
there is a global $1$-Lipschitz extension $F\colon X\to G$
of $u$ such that
\begin{equation*}
 \sup\{\|F(x)-v_0(x)\|\mid x\in Q\}<\varepsilon
\end{equation*}
and the following holds in every component $U$ of $X\setminus D(u,A)$.
There are increasing finite-dimensional linear subspaces
$E_{j,U}\subset X$ with $\dim E_{j,U}\geq2$ and compact
polyhedral sets $T_{j,U}\subset E_{j,U}\cap U$ such that
\begin{equation*}
 \begin{gathered}
 T_{j,U}=\cl\bigl(\operatorname{int}_{E_{j,U}}T_{j,U}\bigr),
 \text{ and }
 T_{j,U}\subset\operatorname{int}_{E_{j+1,U}}T_{j+1,U},\\
 \cl\bigl(\bigcup_j T_{j,U}\bigr)=\cl U.
 \end{gathered}
\end{equation*}
Each displayed interior is nonempty and connected. The restriction
of $F$ to each $T_{j,U}$ is piecewise affine and isometric on
every simplex of a finite triangulation. On each full-dimensional
simplex, the linear part of $\pi F$ is an isometric embedding
in the first case above and a coisometry onto $Y$ in the second.
\end{proposition}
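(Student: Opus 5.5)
We build $F$ as a Kirszbraun extension of a growing finite datum, iterating Lemma~\ref{lem:polyhedral-step}, and never change a value once it has been prescribed.

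\emph{Approximation on $Q$.} Use the $1$-Lipschitz property of both maps. Choose a finite $\varepsilon/3$-net $N\subset Q$. Points of $N\cap D(u,A)$ need no care: if $D(u,A)\ne X$, Lemma~\ref{lem:forced-degeneracy} forces every extension to agree with $v_0$ there. (If $D(u,A)=X$ there are no components and any Kirszbraun extension of $u\cup v_0|_N$ works.) For the points of $N\setminus D(u,A)$ we need values close to $v_0$ that still have a uniform gap against $u$. By Lemma~\ref{lem:addition} with $P=\emptyset$, there is an extension $W$ with a margin $\gamma$ on the finite set $N\setminus D$. Set
\begin{equation*}
a_0=(1-t)v_0+tW \quad\text{on } P_0=N\setminus D(u,A),\qquad t\text{ small}.
\end{equation*}
This is a convex combination of two extensions of $u$, hence a $1$-Lipschitz datum. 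By convexity of $\Phi$ it has gap $t\gamma$ against $u$, and it lies within $t\operatorname{diam}$-control of $v_0$, so within $\varepsilon/3$ on $P_0$.

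\emph{Filling the components.} Take a countable dense set $\{d_i\}$ of $X\setminus D(u,A)$; this is possible since $X$ is separable. Enumerate the pairs $(U,d_i)$, so that each component $U$ (there are countably many) is visited infinitely often. At step $j$, let $K_j$ be the union of the previous polyhedra $T_{j-1,U}$, the points $P_{j-1}$, and small polytopes (segments or points) around the first $j$ points $d_i$. This set is a finite union of polytopes in $X\setminus D(u,A)$. Apply Lemma~\ref{lem:polyhedral-step} to $(P_{j-1},a_{j-1},K_j)$ to get $P_j$, $a_j$, $E_j$ and $T_{j,U}$.

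The nesting $T_{j-1,U}\subset\operatorname{int}_{E_j}T_{j,U}$ comes from $K_j\cap U\subset\operatorname{int}_{E_j}T_{j,U}$. The spaces increase because $E_j\supset T_{j-1,U}$; if needed, enlarge $E_j$ by including $E_{j-1}$ in the choice of $E$ inside the proof of Lemma~\ref{lem:polyhedral-step}. Per component, set $E_{j,U}$ to be the subsequence of those $E_j$ with $U$ meeting $K_j$, starting once $U$ is first met. Density holds because $d_i\in T_{j,U}$ for large $j$ and $\{d_i\}\cap U$ is dense in $\cl U$, since $U$ is open.

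\emph{Limit map.} The union $a_\infty=\bigcup a_j$ on $A\cup\bigcup P_j$ is $1$-Lipschitz, since each finite stage is compatible with $u$ (gap $\geq0$) and internally $1$-Lipschitz. Let $F$ be a Kirszbraun extension of $a_\infty$. By affine rigidity (Lemma~\ref{lem:leaves}), the vertex values make $F$ affine isometric on every simplex of each triangulation of $T_{j,U}$, and the stated linear-part properties are inherited from Lemma~\ref{lem:polyhedral-step}. On $Q$, for $x$ with a net point $n$ at distance at most $\varepsilon/3$, we get $\|F(x)-v_0(x)\|\leq 2\|x-n\|+\|F(n)-v_0(n)\|<\varepsilon$. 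Here $F(n)=a_0(n)$ or the forced value, so strictness follows from choosing the net with mesh strictly less than $\varepsilon/3$.

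The main obstacle is the bookkeeping: consistently indexing the subspaces per component so they increase, and showing that the interiors $\operatorname{int}_{E_{j,U}}T_{j,U}$ stay connected and nested when $E$ changes between steps. I would handle this by always including the previous $E$ and all of $K_j$ in the new $E$, and by noting that an interior relative to a smaller subspace sits inside the thickened set $H_U+\operatorname{int}C$ of the next step.
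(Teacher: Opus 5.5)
Your proposal is correct and follows essentially the same route as the paper. You build a gapped initial datum by averaging $v_0$ with a margin extension on the net points off $D(u,A)$, iterate Lemma~\ref{lem:polyhedral-step} along a dense sequence, and take a Kirszbraun extension of the coherent union; affine rigidity and the forced values on $N\cap D(u,A)$ then give the structure and the approximation on $Q$.

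Two points are worth tidying. First, Lemma~\ref{lem:polyhedral-step} requires $P$ to be nonempty, and your $P_0$ may be empty (for instance if $Q\subset D(u,A)$). Adjoin $d_1$ to $P_0$, as the paper does with $q_1$. Second, the subspaces increase without modifying the lemma: $T_{j-1,U}\subset E_j$ has nonempty interior in $E_{j-1}$, so it spans $E_{j-1}$, which gives $E_{j-1}\subset E_j$.
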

\begin{proof}
\emph{Initialisation.}
If $D(u,A)=X$, take $F=v_0$. Otherwise, Lemma~\ref{lem:forced-degeneracy}
shows that every extension of $u$ already agrees with $v_0$ on
$D(u,A)$. Only values outside $D(u,A)$ need to be controlled.
Choose a countable dense set $\{q_1,q_2,\ldots\}$ in
$X\setminus D(u,A)$, choose $r>0$ with $4r<\varepsilon$, and take
a finite $r$-net $N\subset Q$. Put
\begin{equation*}
 P_0=(N\setminus D(u,A))\cup\{q_1\},\qquad K_0=P_0.
\end{equation*}
Lemma~\ref{lem:addition}, with no additional finite datum, gives
a global extension $s$ of $u$ with a positive uniform gap
against $u$ on $P_0$. For sufficiently small $t\in(0,1)$, the
finite datum
\begin{equation*}
 a_0=((1-t)v_0+ts)|_{P_0}
\end{equation*}
has the same property and differs from $v_0$ by less than
$\varepsilon/2$ on $P_0$.

\emph{The induction.}
At stage $j$, retain a finite $1$-Lipschitz datum
$a_j\colon P_j\to G$, extending every earlier finite datum,
with a positive uniform gap against $u$ on $A$. Also retain a
finite union of prescribed simplices $K_j\subset X\setminus D(u,A)$
containing $P_j$, with all simplex vertices in $P_j$.
Their affine isometric values are fixed by these vertices.
Initially the simplices are just the points of $P_0$.

Apply Lemma~\ref{lem:polyhedral-step} to the original datum
$u\colon A\to G$, the finite datum $a_j$, and the set
$K_j\cup\{q_{j+2}\}$. Let $K_{j+1}$ be the union of the
polyhedra it constructs. The new vertex datum
$a_{j+1}\colon P_{j+1}\to G$ has positive gap and preserves
all old values. Affine rigidity (Lemma~\ref{lem:leaves}) therefore
preserves every old
simplex, and the new triangulations give the next stage's
prescribed simplices. The old polyhedra lie in the relative
interiors of the new ones.

Every component of $X\setminus D(u,A)$ appears once a point $q_j$
in it is added, and appears at every later stage. Its nested
polyhedra contain all the $q_j$ in that component, so their union
is dense there. Their source subspaces increase because a
polyhedron with nonempty interior in a linear subspace spans
that subspace. Reindexing in each component gives the asserted
$E_{j,U}$ and $T_{j,U}$.

\emph{Extension and approximation.}
Kirszbraun's theorem extends the coherent $1$-Lipschitz datum on
$A\cup\bigcup_jP_j$ to a global map $F$. Affine rigidity
preserves every prescribed simplex, so the asserted isometric
and coisometric structure follows from the finite enlargement
lemma. No convergence of the auxiliary maps is required.
The map $F$ differs from $v_0$ by less than $\varepsilon/2$ on
$N\setminus D(u,A)$ and agrees with $v_0$ on $N\cap D(u,A)$ by
Lemma~\ref{lem:forced-degeneracy}. The two $1$-Lipschitz
bounds therefore give
\begin{equation*}
 \begin{aligned}
 \sup\{\|F(x)-v_0(x)\|\mid x\in Q\}
 &\leq2r+\max\{\|F(z)-v_0(z)\|\mid z\in N\}\\
 &<2r+\varepsilon/2<\varepsilon.
 \end{aligned}
\end{equation*}
\end{proof}

\subsection{Boundary saturation}\label{subsec:saturation}

We now show that the extension $\widetilde u$ constructed in
Proposition~\ref{prop:skeleton} is saturated: there is no nonempty
open $V\subset\Omega$ with $\Lip(\widetilde u,\partial V)<1$.
Suppose there were one. Lemma~\ref{lem:paste} gives an
extension $w$ of $u$ that is strictly contracting on $V$ and shows
that $V$ avoids $D(u,A)$. Let $U$ be a component of
$\Omega\setminus D(u,A)$ meeting $V$. By
Lemma~\ref{lem:nonseparation}, the interior of each polyhedron
$T_{j,U}$ with $\partial V$ removed is connected and dense in
$T_{j,U}$. Since the polyhedra are nested with dense union, from
some index on each of these connected sets meets $V$, hence lies
in $V$, and therefore $U\subset\cl V$. Thus $w$ is strictly
contracting on $U$, and Lemma~\ref{lem:contact} forces $U$ to be
all of $\Omega$, contradicting the Lipschitz constant of the
boundary datum. Figure~\ref{fig:saturation} illustrates these two
propagation steps.

\begin{figure}[t]
\centering
\begin{tikzpicture}[scale=1.05]
 \def\blob{plot[smooth cycle, tension=0.8] coordinates
   {(0,0) (2.5,-0.7) (5.4,0) (6.6,2.2) (5.4,4.2) (2.2,4.7) (-0.3,3.1) (-0.9,1.4)}}
 \draw[very thick] \blob;
 \node[anchor=west] at (6.55,3.1) {$A=\partial\Omega$};
 \node at (0.55,1.1) {$\Omega$};
 \begin{scope}
  \clip \blob;
  \draw[very thick] plot[smooth, tension=0.9] coordinates
    {(1.6,-1.2) (2.0,1.2) (2.6,2.4) (2.3,3.6) (2.0,5.2)};
 \end{scope}
 \node at (1.75,2.95) {$D(u,A)$};
 \fill[gray!22] (3.8,0.7) -- (5.7,0.9) -- (5.9,2.6) -- (4.8,3.6) -- (3.85,3.0) -- cycle;
 \draw[thin] (3.8,0.7) -- (5.7,0.9) -- (5.9,2.6) -- (4.8,3.6) -- (3.85,3.0) -- cycle;
 \fill[gray!50] (4.25,1.3) -- (5.15,1.45) -- (5.25,2.35) -- (4.6,2.95) -- (4.15,2.4) -- cycle;
 \draw[thin] (4.25,1.3) -- (5.15,1.45) -- (5.25,2.35) -- (4.6,2.95) -- (4.15,2.4) -- cycle;
 \node at (4.7,2.1) {$T_1$};
 \node at (4.05,1.0) {$T_2$};
 \node at (3.35,3.85) {$U$};
 \draw[dashed, thick] plot[smooth cycle, tension=0.9] coordinates
   {(3.45,0.45) (4.7,0.25) (6.1,0.7) (6.25,2.3) (5.5,3.8) (4.35,4.0) (3.4,3.3) (3.3,1.8)};
 \node at (4.15,0.18) {$\partial V$};
\end{tikzpicture}
\caption{Propagation inside a component $U$ of $\Omega\setminus D(u,A)$.
The nested polyhedra $T_1\subset T_2$ are shaded; the comparison
boundary $\partial V$ is dashed. If its Lipschitz constant is
less than one, $\partial V$ cannot disconnect the interior of any
polyhedron (Lemma~\ref{lem:nonseparation}), so every sufficiently
late polyhedron lies in $\cl V$. Their dense union then gives
$U\subset\cl V$. Affine isometries are prescribed on the
simplices of each polyhedron, not on the polyhedron as a whole.}
\label{fig:saturation}
\end{figure}

\begin{proof}[Proof of Theorem~\ref{thm:domain} for separable sources]
If $L=0$, the extension is constant. If $\dim X=1$ and $L>0$,
then $\Omega$ is a bounded interval: a ray has only one boundary
point and would give $L=0$. Equality in the triangle inequality
forces every optimal extension on the interval to be the affine
interpolation of its endpoint values. This map is saturated.
We may therefore assume $\dim X\geq2$ and, after dividing the
data and requested approximation error by $L$, that $L=1$.

Put $A=\partial\Omega$, and extend $v_0$ globally by
Kirszbraun's theorem. Choose $G$ and $\pi$ as above, regard
$u$ and $v_0$ as $G$-valued.
Proposition~\ref{prop:skeleton} gives $F$; put
$\widetilde u=\pi F$. Projection fixes $v_0$ and does not
increase distances, so $\widetilde u$ has the required
approximation and global $1$-Lipschitz bound.

Suppose, for a contradiction, that some nonempty open
$V\subset\Omega$ has $k=\Lip(\widetilde u,\partial V)<1$.
Apply Lemma~\ref{lem:paste} to $\widetilde u$ as a $G$-valued
map. It gives a global extension $w$ of $u$ that is
$k$-Lipschitz on $V$, and shows that $V\cap D(u,A)=\emptyset$.

\emph{Propagation through one component.}
Choose a component $U$ of $\Omega\setminus D(u,A)$ meeting $V$.
Since $\partial\Omega\subset D(u,A)$, it is also a component of
$X\setminus D(u,A)$, and the polyhedra of
Proposition~\ref{prop:skeleton} are available in $U$.
On every full-dimensional simplex, $\widetilde u$ has a linear
part isometric in at least two directions. Apply
Lemma~\ref{lem:nonseparation} in $E_{j,U}$ to the closed set
$E_{j,U}\cap\partial V$. It shows that
\begin{equation*}
 O_j=\operatorname{int}_{E_{j,U}}T_{j,U}\setminus\partial V
\end{equation*}
is connected and dense in $T_{j,U}$.

The union of the polyhedra is dense in $U$, so some $O_{j_0}$
meets the nonempty open set $V\cap U$.
The nesting $T_{j,U}\subset\operatorname{int}_{E_{j+1,U}}T_{j+1,U}$
gives $O_j\subset O_{j+1}$. Hence every $O_j$ with $j\geq j_0$
meets $V$. Such an $O_j$ cannot have points on both sides of
$\partial V$: it is connected and avoids that boundary.
It follows that $O_j\subset V$, and density gives
$T_{j,U}\subset\cl V$. Taking the union of these later
polyhedra yields $U\subset\cl V$. Continuity now extends the
$k$-Lipschitz estimate for $w$ from $V$ to all of $U$.

\emph{Propagation to the original boundary.}
Lemma~\ref{lem:contact}, applied to $u$ and $w$, gives
$\partial U\cap\Omega=\emptyset$. Thus $U$ is relatively
closed as well as relatively open in $\Omega$. Since $\Omega$
is connected, $U=\Omega$. The map $w$ is therefore
$k$-Lipschitz on $\Omega$ and, by continuity, on its closure.
It agrees with $u$ on $A$, contradicting
$\Lip(u,A)=1>k$.

We have proved $\Lip(\widetilde u,\partial V)=1$ for every
nonempty open $V\subset\Omega$. Since $\partial V\subset\cl V$,
continuity and the global bound give
$\Lip(\widetilde u,V)=1$ as well. Restricting to $\cl\Omega$
and restoring the scale $L$ proves the theorem.
\end{proof}

\begin{remark}[Structure of the constructed extensions]\label{rem:structure}
Let $X$ be separable, normalise $L=1$, and let $\widetilde u=\pi F$
be the extension constructed above. In each component $U$ of
$\Omega\setminus D(u,\partial\Omega)$, the map $F$ is affine
isometric on every simplex of the nested polyhedra $T_{j,U}$ of
Proposition~\ref{prop:skeleton}, whose union is dense in $U$. By
Lemma~\ref{lem:leaves}, each simplex lies in a leaf of $F$; the
polyhedron need not, since adjacent simplices may carry different
isometries. As the full-dimensional simplices cover $T_{j,U}$ and
$\dim E_{j,U}\geq2$, the leaves of $F$ of dimension at least two
have dense union in $\Omega\setminus D(u,\partial\Omega)$. The same
holds for $\widetilde u$: either $\widetilde u=F$, or
$\dim Y<\dim X$ and the linear part $R$ of $\widetilde u$ on a
full-dimensional simplex $\Delta$ is a coisometry onto $Y$, so that
$\widetilde u$ preserves distances on the slices of $\Delta$
parallel to $(\ker R)^\perp$, which have dimension $\dim Y\geq2$
and fill the interior of $\Delta$.

Off the polyhedra nothing is prescribed, and in an
infinite-dimensional source they have empty interior. When $X$ is
finite-dimensional, the union of their interiors is a dense open
subset of $\Omega\setminus D(u,\partial\Omega)$ on which
$\widetilde u$ is locally piecewise affine. Whether this set can be
chosen of full measure, and whether every saturated extension then
has derivative of norm one almost everywhere, remain open here.
\end{remark}

\section{Passage to arbitrary Hilbert spaces}\label{sec:nonseparable}

A map constructed on a separable section may violate boundary
constraints from outside that section. We first choose sections
that retain all these constraints, then enlarge them while keeping
previously assigned values fixed. The Kuratowski--Zorn lemma
assembles a coherent family of these extensions. Exact agreement
preserves saturation when we pass to the closure of its union.

Throughout this section, $X$ is nonseparable,
$\Omega\subsetneq X$ is nonempty, connected and open,
$A=\partial\Omega$, and $u\colon A\to Y$ satisfies
$\Lip(u,A)=1$, with $\dim Y\geq2$. Fix $x_*\in\Omega$.
To ensure that restriction never lowers the boundary Lipschitz
constant, choose distinct points $a_n,b_n\in A$ such that
\begin{equation*}
 \frac{\|u(a_n)-u(b_n)\|}{\|a_n-b_n\|}\longrightarrow1.
\end{equation*}
Choose a separable closed linear subspace $E_*$ containing
$x_*$ and sequences in $\Omega$ converging to every $a_n,b_n$.
For a closed linear subspace $E$, write
$\Omega_E=\Omega\cap E$. Subscripts on boundaries and closures
indicate the ambient subspace. Whenever $E\supset E_*$, the
retained sequences ensure that $a_n,b_n\in\partial_E\Omega_E$.
Since always
$\partial_E\Omega_E\subset A\cap E$, we have
\begin{equation*}
 \Lip(u,\partial_E\Omega_E)=1.
\end{equation*}
Thus we only need to preserve boundary accessibility for the
countably many pairs witnessing the Lipschitz constant.

\subsection{Sections retaining the boundary data}

We enlarge the source and target together. Let $\Phi=\Phi_{u,A}$
be the margin function \eqref{eq:margin-function}. A pair $(E,J)$
of closed linear subspaces of $X$ and $Y$ is \emph{compatible} if
$E_*\subset E$, $\dim J\geq2$, $u(A\cap E)\subset J$,
$\Omega_E$ is connected, and
\begin{equation}\label{eq:reflection}
 \Phi(x,p)=\sup\{\|p-u(y)\|-\|x-y\|\mid y\in A\cap E\}
 \qquad(x\in E,\ p\in J).
\end{equation}
This identity says that, for values in $J$, the boundary points
in $E$ impose all the constraints of the original datum.
Thus every $1$-Lipschitz map
$v\colon E\to J$ extending $u|_{A\cap E}$ satisfies
\begin{equation}\label{eq:global-compatibility}
 \|v(x)-u(y)\|\leq\|x-y\|
 \qquad(x\in E,\ y\in A),
\end{equation}
because \eqref{eq:reflection} gives $\Phi(x,v(x))\leq0$.
We may therefore keep $v$ unchanged when further original boundary
values are introduced.

Some points of $A\cap E$ may lie outside
$\cl_E\Omega_E$. We keep their prescribed values by extending
a map on $\Omega_E$ through pasting into an existing global
extension on $E$.

\begin{lemma}[Compatible subspaces of controlled density]\label{lem:compatible}
Let $\mu$ be an infinite cardinal, and let $S\subset X$ and
$T\subset Y$ have density at most $\mu$. There is a compatible
pair $(E,J)$ with $S\subset E$, $T\subset J$, and
\begin{equation*}
 \operatorname{dens}E\leq\mu,
 \qquad \operatorname{dens}J\leq\mu.
\end{equation*}
\end{lemma}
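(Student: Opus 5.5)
We build $(E,J)$ by a countable closing-off argument, a L\"owenheim--Skolem type construction of length $\omega$. We construct increasing closed subspaces $E_0\subset E_1\subset\cdots$ of $X$ and $J_0\subset J_1\subset\cdots$ of $Y$, each of density at most $\mu$, and take $E=\cl\bigcup_nE_n$, $J=\cl\bigcup_nJ_n$. A countable union of subspaces of density at most $\mu$ again has density at most $\mu$, since $\mu$ is infinite. We start with $E_0=\cl\Span(S\cup E_*)$, and $J_0$ the closed span of $T$ and two fixed orthonormal vectors of $Y$, which exist since $\dim Y\geq2$. At each stage we close off under three requirements, each of which only needs $\mu$ new witnesses.

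The first requirement concerns the values. Fix a dense subset $C_n\subset A\cap E_n$ of cardinality at most $\mu$. Subspaces of metric spaces have density at most that of the ambient space, so such a set exists. Put $u(C_n)$ into $J_{n+1}$. By continuity of $u$, in the limit $u(A\cap E)\subset J$: a point of $A\cap E$ is a limit of points of $A\cap E_n$ for some sequence of $n$. We should check this carefully, since $A\cap\bigcup E_n$ is dense in $A\cap E$ only if we also add witnesses. We do this by choosing, for each $x$ in a dense subset $D_n\subset E_n$ and each rational $\rho$ with $B_\rho(x)\cap A\neq\emptyset$, a point of $B_\rho(x)\cap A$. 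Add these points to $E_{n+1}$.

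The second requirement is the reflection identity \eqref{eq:reflection}. The supremum defining $\Phi(x,p)$ is $1$-Lipschitz jointly in $(x,p)$, and so is the restricted supremum over $A\cap E$. It therefore suffices to verify \eqref{eq:reflection} on a dense set of pairs. For each pair $(x,p)$ in a dense set of $E_n\times J_n$ of size at most $\mu$, and each $k\in\mathbb N$, choose $y\in A$ with $\|p-u(y)\|-\|x-y\|>\Phi(x,p)-1/k$. Put $y$ into $E_{n+1}$ and $u(y)$ into $J_{n+1}$. In the limit the restricted supremum dominates $\Phi$ on a dense set of pairs, hence everywhere. The reverse inequality is trivial.

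The third requirement, connectedness of $\Omega_E$, is the main obstacle. The plan is to use polygonal connectedness of $\Omega$. For each pair of points of a dense subset of $\Omega\cap E_n$ (size at most $\mu$), choose a polygonal path in $\Omega$ joining them and add its finitely many vertices to $E_{n+1}$. The path then lies in $\Omega_{E_{n+1}}\subset\Omega_E$. This makes $\bigcup_n(\Omega\cap D_n)$ lie in a single path component of $\Omega_E$, provided these dense sets are chosen inside $\Omega\cap E_n$. To conclude, every point $x\in\Omega_E$ has a ball $B_r(x)\subset\Omega$. Its intersection with $E$ is convex and contains points of the union, because $\bigcup_n(\Omega\cap E_n)$ is dense in $\Omega_E$: $\Omega$ is open and $\bigcup E_n$ is dense in $E$. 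So $x$ is joined by a segment inside $\Omega_E$ to that component, and $\Omega_E$ is connected. The remaining conditions are immediate from the initial choices: $E_*\subset E$, $S\subset E$, $T\subset J$, and $\dim J\geq2$. The delicate points are the density bookkeeping at limits (closure versus union) and the Lipschitz continuity of the restricted margin function. We would write both out explicitly.
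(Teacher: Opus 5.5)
Your proposal is correct and follows essentially the same countable closing-off construction as the paper: near-optimal margin witnesses $y\in A$ secure the reflection identity, and polygonal paths in $\Omega$ secure connectedness of $\Omega_E$. The paper differs only in two small ways. It fixes a global Kirszbraun extension $h$ of $u$ and puts $h(E_n)$ into $J_n$, so continuity gives $u(A\cap E)=h(A\cap E)\subset J$ without your extra boundary-approximating witnesses, although those witnesses also work. It also joins every chosen point of $\Omega\cap E_n$ to the fixed point $x_*$, which avoids the step you leave implicit, namely linking points chosen at different stages; your ball-and-segment argument supplies that step, but you should say so explicitly.
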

\begin{proof}
Fix a global $1$-Lipschitz extension $h\colon X\to Y$ of $u$
and a two-dimensional subspace $W\subset Y$. We need only
construct the source sections: their target sections will contain
the corresponding values of $h$.

Let $E_0$ be the closed linear span of $S\cup E_*$.
Given $E_n$, put
\begin{equation*}
 J_n=\cl\bigl(\Span(T\cup W\cup h(E_n))\bigr).
\end{equation*}
Choose dense subsets of $E_n$ and $J_n$ of cardinality at most
$\mu$. For every pair $(x,p)$ in these subsets and every positive
integer $r$, choose $y\in A$ with
\begin{equation*}
 \|p-u(y)\|-\|x-y\|>\Phi(x,p)-1/r.
\end{equation*}
Choose also a dense subset of $\Omega\cap E_n$ of cardinality
at most $\mu$ and join its points to $x_*$ by finite polygonal
paths in $\Omega$. Let $E_{n+1}$ be the closed linear span of
$E_n$, all the chosen points $y$, and all the path vertices.
Each step preserves the density bound $\mu$, because $h$ is
Lipschitz and at most $\mu$ vectors are added to the source.

Set
\begin{equation*}
 E=\cl\bigl(\bigcup_nE_n\bigr),
 \qquad J=\cl\bigl(\bigcup_nJ_n\bigr).
\end{equation*}
Both spaces have density at most $\mu$. Continuity gives
$h(E)\subset J$, so in particular $u(A\cap E)\subset J$.
Every boundary point selected above lies in $A\cap E$.
The margin choices therefore give \eqref{eq:reflection} on the
chosen dense pairs, and then on $E\times J$ by continuity.
The chosen paths have a connected union containing $x_*$, dense
in $\Omega_E$: approximate a point of this open section first
from some $E_n$ and then from the chosen dense subset of
$\Omega\cap E_n$. Thus $\Omega_E$ is connected.
\end{proof}

\subsection{Enlarging a section without changing its values}

Once a saturated map on $E$ satisfies
\eqref{eq:global-compatibility}, we can keep it fixed and fill
$\Omega_F\setminus E$ in a larger compatible section $F$. If $E$
has codimension at least two in $F$, this set is connected, and
its boundary contains $\Omega_E$, where the old map already has
Lipschitz constant one. Only the new section needs to be
compatible: from the old map we use its saturation and its
inequalities against the original boundary values.

\begin{lemma}[Extension preserving a saturated subspace]\label{lem:saturated-subspace}
Let $(F,J)$ be compatible, and let $E\subset F$ be a closed
linear subspace containing $x_*$ such that $F\cap E^\perp$ has
dimension at least two. Assume that
Theorem~\ref{thm:domain} holds for source $F$ and every Hilbert
target of dimension at least two. Suppose that
$\widetilde u_E\colon E\to J$ is $1$-Lipschitz, satisfies
\eqref{eq:global-compatibility}, and is saturated with constant
one on $\Omega_E$. Then it has a $1$-Lipschitz extension
$\widetilde u_F\colon F\to J$ with the same two properties.
\end{lemma}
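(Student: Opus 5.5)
The plan is to fill the complement of $E$ inside $\Omega_F$ by applying Theorem~\ref{thm:domain} on the source $F$. The old map $\widetilde u_E$ becomes part of the new boundary datum, and the two saturated pieces are then pasted together.

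\emph{The new domain and datum.} Put $\Omega'=\Omega_F\setminus E$, which is open in $F$. It is nonempty because $E$ has empty interior in $F$, and it is proper because $x_*\in E$. Its boundary in $F$ satisfies
\begin{equation*}
 \partial_F\Omega'\subset(A\cap F)\cup\bigl(E\cap\cl_F\Omega_F\bigr),
\end{equation*}
and it contains $\Omega_E$. On $B=(A\cap F)\cup E$ define $u'$ to be $u$ on $A\cap F$ and $\widetilde u_E$ on $E$. These agree on $A\cap E$, since $\widetilde u_E$ extends $u|_{A\cap E}$. The map $u'$ is $1$-Lipschitz. Pairs inside $A\cap F$ and pairs inside $E$ are clear. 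Mixed pairs are exactly the inequality \eqref{eq:global-compatibility} assumed for $\widetilde u_E$.

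Moreover $\Lip(u',\partial_F\Omega')=1$. Indeed $\Omega_E\subset\partial_F\Omega'$ is a nonempty open subset of $\Omega_E$ itself, so saturation of $\widetilde u_E$ gives $\Lip(\widetilde u_E,\Omega_E)=1$.

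\emph{Connectedness of $\Omega'$.} I expect this to be the main point needing care. Join two points of $\Omega'$ by a polygonal path in the connected open set $\Omega_F$. Then perturb the interior vertices slightly, keeping the path inside $\Omega_F$, so that no segment meets $E$.

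A segment $[x,y]$ meets $E$ exactly when the projected segment $[P_{E^\perp}x,P_{E^\perp}y]$ passes through $0$. Since $F\cap E^\perp$ has dimension at least two, a generic small perturbation of each vertex in that space moves the projected segments off the origin. The endpoints already have nonzero projection, and we avoid only finitely many lines through the origin. Hence $\Omega'$ is connected.

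\emph{Applying the theorem and pasting.} Take a Kirszbraun extension $g\colon F\to J$ of $u'$ (Theorem~\ref{thm:kirszbraun}); it is $1$-Lipschitz. Apply Theorem~\ref{thm:domain}, assumed for source $F$ and target $J$ with $\dim J\geq2$, to $\Omega'$ and $u'|_{\partial_F\Omega'}$, with $v_0=g|_{\cl_F\Omega'}$. This yields a $1$-Lipschitz extension $\widetilde u'$ on $\cl_F\Omega'$ that is saturated on $\Omega'$.

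Define $\widetilde u_F$ to be $\widetilde u'$ on $\Omega'$ and $g$ elsewhere. Lemma~\ref{lem:pasting}(i) shows that $\widetilde u_F$ is $1$-Lipschitz on $F$, and it extends $\widetilde u_E$. Since $\widetilde u_F\colon F\to J$ is a $1$-Lipschitz extension of $u|_{A\cap F}$ and $(F,J)$ is compatible, \eqref{eq:reflection} gives \eqref{eq:global-compatibility} for $\widetilde u_F$.

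\emph{Saturation on $\Omega_F$.} Let $V\subset\Omega_F$ be nonempty and open in $F$. We show $\Lip(\widetilde u_F,\partial_FV)=1$ by two cases.

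If $V\cap E=\emptyset$, then $V\subset\Omega'$, and saturation of $\widetilde u'$ gives the claim directly.

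Otherwise $V\cap E$ is a nonempty open subset of $\Omega_E$. Saturation of $\widetilde u_E$ then gives
\begin{equation*}
 \Lip\bigl(\widetilde u_E,\partial_E(V\cap E)\bigr)=1.
\end{equation*}
Every point of $\partial_E(V\cap E)$ lies in $\cl_F V\setminus V=\partial_FV$, so $\Lip(\widetilde u_F,\partial_FV)\geq1$.

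In both cases the global bound gives equality, and $\Lip(\widetilde u_F,V)=1$ follows by continuity as before.
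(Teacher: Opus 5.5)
Your proof is correct and follows the paper's argument: the same datum ($\widetilde u_E$ on $E$, $u$ on $A\cap F$), the same application of the domain theorem on $\Omega_F\setminus E$ with a Kirszbraun extension as initial map, pasting by Lemma~\ref{lem:pasting}, and the same two-case saturation check. The only difference is that you prove connectedness of $\Omega_F\setminus E$ directly, by perturbing the vertices of polygonal paths in $F\cap E^\perp$. The paper instead invokes Lemma~\ref{lem:fibre-nonseparation} for this step. Both routes are valid.
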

\begin{proof}
Prescribe the datum $a$ to equal $\widetilde u_E$ on $E$ and $u$
on $A\cap F$. Inequality~\eqref{eq:global-compatibility} shows
that $a$ is $1$-Lipschitz on the closed set $B=E\cup(A\cap F)$.
Extend it by Kirszbraun to $h\colon F\to J$.

Let $W\subset F\cap E^\perp$ be two-dimensional. Every affine
plane parallel to $W$ meets $E$ in at most one point, and
$E\cap\Omega_F$ is relatively closed in the connected open set
$\Omega_F$. Lemma~\ref{lem:fibre-nonseparation} therefore shows
that $U=\Omega_F\setminus E$ is connected and dense in $\Omega_F$.
Its boundary in $F$ lies in $B$ and contains $\Omega_E$, since
every point of $\Omega_E$ is a limit of points of $U$. Saturation
of the old map gives $\Lip(\widetilde u_E,\Omega_E)=1$, and hence
$\Lip(a,\partial_F U)=1$. Apply the assumed domain theorem on
$U$, with boundary datum $a|_{\partial_F U}$ and initial extension
$h$, and replace $h|_U$ by the resulting saturated extension.
Lemma~\ref{lem:pasting} gives a globally $1$-Lipschitz map
$\widetilde u_F\colon F\to J$ extending $a$.

To check saturation on all of $\Omega_F$, take a nonempty open
set $V\subset\Omega_F$. If $V$ meets $E$, the old saturation
already supplies the required boundary estimate, since
$\partial_E(V\cap E)\subset\partial_F V$:
\begin{equation*}
 1=\Lip(\widetilde u_E,\partial_E(V\cap E))
   \leq\Lip(\widetilde u_F,\partial_F V).
\end{equation*}
If $V$ avoids $E$, then $V\subset U$, and saturation on $U$
gives the same estimate. The global bound and continuity
then give $\Lip(\widetilde u_F,V)=
\Lip(\widetilde u_F,\partial_F V)=1$. Compatibility of
$(F,J)$ gives \eqref{eq:global-compatibility} for the new map.
\end{proof}

Saturation need not survive limits of maps whose old values keep
changing. Here the maps agree exactly on every earlier section.
Every open test set in the closure meets one of these sections,
and the boundary Lipschitz estimates obtained there remain valid
for the final map. This is the reason saturation survives the next limit.

\begin{lemma}[Completion of an increasing chain]\label{lem:coherent-limit}
Let $(E_\alpha)_{\alpha\in I}$ be a nonempty increasing chain
of closed linear subspaces. Suppose the $1$-Lipschitz maps
$\widetilde u_\alpha\colon E_\alpha\to Y$ agree on their
common domains, satisfy \eqref{eq:global-compatibility}, and
are saturated with constant one on $\Omega\cap E_\alpha$.
Their union extends uniquely to a $1$-Lipschitz map on
$E=\cl(\bigcup_\alpha E_\alpha)$ with the same properties.
In particular it agrees with $u$ on $A\cap E$.
\end{lemma}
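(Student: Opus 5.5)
The plan is to build the map on the dense union first, pass to the closure by uniform continuity, and then check the three properties separately. Put $E'=\bigcup_\alpha E_\alpha$. Since the maps agree on common domains and the $E_\alpha$ form a chain, $E'$ is a linear subspace and there is a well-defined map $v\colon E'\to Y$ equal to $\widetilde u_\alpha$ on each $E_\alpha$. Any two points of $E'$ lie in a common $E_\alpha$, so $v$ is $1$-Lipschitz. Because $Y$ is complete, $v$ extends uniquely to a $1$-Lipschitz map $\widetilde u$ on $E=\cl E'$, and uniqueness is clear since any continuous extension is determined on the dense set $E'$.

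Inequality \eqref{eq:global-compatibility} for $\widetilde u$ holds on $E'$ by hypothesis, because it is a pointwise condition in $x$. For general $x\in E$ we take limits: both sides are continuous in $x$ for fixed $y\in A$. Setting $x=y\in A\cap E$ gives $\|\widetilde u(y)-u(y)\|\le0$, so $\widetilde u$ agrees with $u$ on $A\cap E$.

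Saturation is the main step. Fix a nonempty set $V\subset\Omega_E$ that is open in $E$; we need $\Lip(\widetilde u,\partial_E V)=1$. The key observation is that $V$ meets some $E_\alpha$. Indeed, $V$ is nonempty and open in $E$, and $E'$ is dense in $E$, so $V$ contains a point of $E'$, hence of some $E_\alpha$. Then $V_\alpha=V\cap E_\alpha$ is a nonempty subset of $\Omega\cap E_\alpha$ that is open in $E_\alpha$. By saturation of $\widetilde u_\alpha$,
\begin{equation*}
 \Lip(\widetilde u_\alpha,\partial_{E_\alpha}V_\alpha)=1 .
\end{equation*}
It remains to compare boundaries, and we claim $\partial_{E_\alpha}V_\alpha\subset\partial_E V$. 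A point $z$ of the left side lies in $E_\alpha$ and is a limit of points of $V$, so $z\in\cl_E V$. Moreover, $z\notin V$: otherwise $z\in V\cap E_\alpha=V_\alpha$, and $V_\alpha$ is relatively open in $E_\alpha$, so $z$ would be an interior point of $V_\alpha$, not a boundary point. Hence $z\in\partial_E V$.

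Since $\widetilde u$ restricts to $\widetilde u_\alpha$ on $E_\alpha$, we obtain
\begin{equation*}
 1\le\Lip(\widetilde u,\partial_E V)\le\Lip(\widetilde u,\cl_E V)=\Lip(\widetilde u,V)\le1 ,
\end{equation*}
where the middle equality uses continuity and the last inequality is the global bound. So $\Lip(\widetilde u,V)=\Lip(\widetilde u,\partial_E V)=1$, which is the required saturation. The only delicate point is this boundary inclusion, and it works because exact agreement lets $V$ be tested on a single section, with no limiting argument on Lipschitz constants.
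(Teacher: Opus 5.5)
Your proposal is correct and follows the paper's own proof. You extend the union map by density and completeness, pass \eqref{eq:global-compatibility} to the closure by continuity, and verify saturation by testing a nonempty relatively open $V\subset\Omega_E$ on a single section $E_\alpha$ that it meets, using $\partial_{E_\alpha}(V\cap E_\alpha)\subset\partial_E V$. The paper states the density step and this boundary inclusion without elaboration, and you spell out both.
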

\begin{proof}
Exact agreement makes the union map well-defined and
$1$-Lipschitz; denote its unique extension to $E$ by
$\widetilde u$. Inequality~\eqref{eq:global-compatibility} passes to this
extension. At $x=y\in A\cap E$ it gives
$\widetilde u(x)=u(x)$.

Let $V\subset\Omega_E$ be nonempty and relatively open.
It meets some $E_\alpha$, and the boundary of its section is
contained in its full boundary. Because the map is unchanged on
$E_\alpha$,
\begin{equation*}
 1=\Lip(\widetilde u_\alpha,
                  \partial_{E_\alpha}(V\cap E_\alpha))
 \leq\Lip(\widetilde u,\partial_E V)\leq1.
\end{equation*}
Continuity gives $\Lip(\widetilde u,V)=1$ as well.
\end{proof}

\subsection{The maximality argument}

\begin{proof}[Completion of Theorem~\ref{thm:domain}]
Suppose the theorem fails, and choose a counterexample with least
source density $\kappa$, allowing all Hilbert targets of dimension
at least two. Then $\kappa>\aleph_0$, and the theorem holds for
every source of smaller density and every such target. Normalise
to $L=1$, the case $L=0$ being immediate, and extend $v_0$ globally
by Kirszbraun. Index an orthonormal basis of $X$ by
$(e_\alpha)_{\alpha<\kappa}$, viewing $\kappa$ as an initial ordinal.

We first arrange the approximation on a separable section.
Lemma~\ref{lem:compatible}, applied to $Q\cup\{e_0\}$ and
$v_0(Q)$, gives a compatible separable pair $(E_0,J_0)$.
The map $h=P_{J_0}v_0$ agrees with $u$ on $A\cap E_0$ and
with $v_0$ on $Q$. Apply the separable theorem on $\Omega_{E_0}$
to the boundary datum $u|_{\partial_{E_0}\Omega_{E_0}}$, the
initial extension $h$, and the nonempty compact set
\begin{equation*}
 K=(Q\cap\cl_{E_0}\Omega_{E_0})\cup\{x_*\}.
\end{equation*}
Paste the resulting map into $h$ on $E_0$ using
Lemma~\ref{lem:pasting}. The resulting $1$-Lipschitz map
$\widetilde u_0\colon E_0\to J_0$ agrees with $u$ on all
of $A\cap E_0$, is saturated on $\Omega_{E_0}$, and is
within $\varepsilon$ of $v_0$ on $Q$: on the remaining points
of $Q$, it still equals $h=v_0$. Reflection gives
\eqref{eq:global-compatibility} for $\widetilde u_0$.

We use the Kuratowski--Zorn lemma to assemble a coherent family
of such maps. A \emph{partial construction} of length
$1\leq\gamma\leq\kappa$ consists of increasing closed linear
subspaces $E_\alpha$ and $1$-Lipschitz maps
$\widetilde u_\alpha\colon E_\alpha\to Y$, $\alpha<\gamma$,
starting with $(E_0,\widetilde u_0)$, that extend one another,
satisfy \eqref{eq:global-compatibility}, and are saturated with
constant one on $\Omega_{E_\alpha}$. Require also
\begin{equation*}
 e_\alpha\in E_\alpha,
 \qquad
 \operatorname{dens}E_\alpha
 \leq\mu_\alpha:=\max\{\aleph_0,|\alpha|\}<\kappa.
\end{equation*}
Order partial constructions by extension of the indexed family:
a longer one must agree with the shorter one at every earlier
index. The one-term construction exists, and the union of a
nonempty chain is again a partial construction, of length at
most $\kappa$. Thus the Kuratowski--Zorn lemma gives a maximal
partial construction, of some length $\gamma$.

If $\gamma<\kappa$, Lemma~\ref{lem:coherent-limit} gives a
map $v\colon E\to Y$ on the closed source union.
There are at most $|\gamma|$ earlier sections, each of density
at most $\mu_\gamma$, so
\begin{equation*}
 \operatorname{dens}v(E)\leq\operatorname{dens}E
 \leq\mu_\gamma<\kappa.
\end{equation*}
This estimate holds for every $\gamma<\kappa$, even when
$\kappa$ is singular. Since $\operatorname{dens}E<\kappa$, the
orthogonal complement $E^\perp$ has dimension at least two, for
otherwise $X$ would have density at most $\operatorname{dens}E$;
choose linearly independent $r,s\in E^\perp$.
Apply Lemma~\ref{lem:compatible} to
$E\cup\{e_\gamma,r,s\}$ and $v(E)$, with bound $\mu_\gamma$,
to obtain $(F,J)$. Then $F\cap E^\perp$ has dimension at least
two, and $F$ has density less than $\kappa$,
so Theorem~\ref{thm:domain} and Lemma~\ref{lem:saturated-subspace}
extend $v$ to a saturated map $w\colon F\to J$ satisfying
\eqref{eq:global-compatibility}. Appending $(F,w)$ at index $\gamma$
strictly extends the partial construction, a contradiction.
This argument treats successor and limit lengths in the same way.

Hence $\gamma=\kappa$. The source union contains every basis
vector, so its closure is $X$. Lemma~\ref{lem:coherent-limit}
now gives a saturated $1$-Lipschitz extension on $X$ that agrees
with $u$ and retains the approximation on $Q$. Its restriction
to $\cl\Omega$ contradicts the choice of counterexample.
\end{proof}

\section{Arbitrary prescribed sets and all dimensions}\label{sec:existence}

For a general prescribed set $A$, absolute minimality requires
optimality on each component of $X\setminus A$, with the
Lipschitz constant determined by that component's boundary.
The global constant $\Lip(u,A)$ can be larger. We therefore
solve the boundary problems separately at their own constants;
Lemma~\ref{lem:pasting} then gives comparison even on open sets that
meet several components.

\begin{proof}[Proof of Theorem~\ref{thm:closed}]
Put $L=\Lip(u,A)$. The cases $A=\emptyset$, $A=X$, or
$L=0$ are immediate.

If $\dim Y=1$, identify $Y$ with $\R$ and apply Milman's
existence theorem \cite[Theorems~4.2 and~5.4, Section~6]{Milman}
to $X\setminus A$, with boundary values
$u|_{\partial(X\setminus A)}$. Its hypotheses hold because
the Hilbert metric is a length metric and Lipschitz data are
linearly bounded. The resulting extension has Lipschitz constant
at most $L$ and satisfies comparison on every open subset of
$X\setminus A$. Retaining $u$ on $A$ gives the required global
extension by Lemma~\ref{lem:pasting}\textup{(i)}.
No boundedness or separability assumption is needed.

Suppose now that $\dim Y\geq2$. On every component $\Omega_i$
of $X\setminus A$, choose a saturated extension of
$u|_{\partial\Omega_i}$ with constant
$L_i=\Lip(u,\partial\Omega_i)$ by Theorem~\ref{thm:domain};
when $L_i=0$, use the constant extension. Since $L_i\leq L$,
Lemma~\ref{lem:pasting}\textup{(i)} and the observation following
it paste these maps with $u$ into a global $L$-Lipschitz
extension $\widetilde u$.

For a nonempty open $V\subset X\setminus A$, put
$k=\Lip(\widetilde u,\partial V)$ and $V_i=V\cap\Omega_i$.
Since $\partial\Omega_i\subset A$, we have
$\partial V_i\subset\partial V$. Comparison on each component
therefore gives $\Lip(\widetilde u,V_i)\leq k$ whenever
$V_i\ne\emptyset$. Thus $\widetilde u$ is locally
$k$-Lipschitz on $V$, and Lemma~\ref{lem:pasting}\textup{(ii)}
gives $\Lip(\widetilde u,V)\leq k$. The reverse inequality
follows by continuity.
\end{proof}

The component bounds are also the only obstruction to approximating
an optimal global extension by AMLEs.

\begin{corollary}[Closure of the class of AMLEs]\label{cor:density}
Let $X,Y$ be Hilbert spaces with $\dim Y\geq2$, let $A\subset X$
be closed, and let $u\colon A\to Y$ be Lipschitz. Among optimal
extensions of $u$, the closure of the absolutely minimising
extensions for uniform convergence on compact sets consists
precisely of the maps $v$ satisfying
\begin{equation*}
 \Lip(v,\Omega_i)=\Lip(u,\partial\Omega_i)
 \text{ for every component }\Omega_i\text{ of }X\setminus A.
\end{equation*}
\end{corollary}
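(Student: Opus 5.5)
We prove the two inclusions separately. Throughout, write $L_i=\Lip(u,\partial\Omega_i)$ and $L=\Lip(u,A)$.

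\emph{Necessity.} Let $v$ be optimal and the limit, uniformly on compact sets, of AMLEs $\widetilde u_n$. Fix a component $\Omega_i$. Since $\Omega_i$ is open and $\partial\Omega_i\subset A$, the comparison identity \eqref{eq:amle} applied to $V=\Omega_i$ gives $\Lip(\widetilde u_n,\Omega_i)=\Lip(\widetilde u_n,\partial\Omega_i)=L_i$. Each difference quotient $\|v(x)-v(y)\|/\|x-y\|$ with $x,y\in\Omega_i$ is a limit of the corresponding quotients for $\widetilde u_n$, because the two-point set $\{x,y\}$ is compact. Hence $\Lip(v,\Omega_i)\leq L_i$. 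For the reverse inequality, $v=u$ on $\partial\Omega_i\subset\cl\Omega_i$, and continuity gives $\Lip(v,\Omega_i)=\Lip(v,\cl\Omega_i)\geq L_i$.

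\emph{Sufficiency.} Let $v$ be optimal with $\Lip(v,\Omega_i)=L_i$ for every $i$, and fix a compact set $Q$ and $\varepsilon>0$. On each component with $L_i>0$ we apply Theorem~\ref{thm:domain} to the connected open set $\Omega_i\subsetneq X$. (The component is proper because $A\neq\emptyset$; the degenerate cases $A=\emptyset$ and $A=X$ are trivial.) We use the boundary datum $u|_{\partial\Omega_i}$, the $L_i$-Lipschitz extension $v|_{\cl\Omega_i}$, and the compact set $Q\cap\cl\Omega_i$, whenever this set is nonempty. This produces a saturated extension $\widetilde u_i$ that is $\varepsilon$-close to $v$ on $Q\cap\cl\Omega_i$. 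On components where $Q$ does not reach, any saturated extension will do. When $L_i=0$, the datum on $\partial\Omega_i$ is constant and $v$ is that constant on $\Omega_i$; we keep $v$ there.

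Pasting all these maps with $u$ on $A$ via Lemma~\ref{lem:pasting}\textup{(i)} (with a global Kirszbraun extension as the background map) gives a global $L$-Lipschitz extension $\widetilde u$, which is therefore optimal. The argument in the proof of Theorem~\ref{thm:closed} then shows that $\widetilde u$ is an AMLE: comparison holds componentwise and globalises by Lemma~\ref{lem:pasting}\textup{(ii)}. Finally, $\|\widetilde u-v\|<\varepsilon$ holds pointwise on $Q$, since every point of $Q$ lies in $A$, where both maps equal $u$, or in some $\cl\Omega_i$.

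The main obstacle is that $Q$ may meet infinitely many components. The pointwise bound is nevertheless uniform, because every application of Theorem~\ref{thm:domain} uses the same $\varepsilon$; we take $\varepsilon/2$ in each application to get a strict supremum bound. The remaining checks are that $Q\cap\cl\Omega_i$ is compact and that $v|_{\cl\Omega_i}$ is $L_i$-Lipschitz. The latter follows from the hypothesis $\Lip(v,\Omega_i)=L_i$ by continuity.
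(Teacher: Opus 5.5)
Your proof is correct and follows the paper's argument. For necessity, you use comparison on each component $\Omega_i$ together with the fact that the resulting pairwise Lipschitz inequalities survive uniform convergence on compact sets; for sufficiency, you apply Theorem~\ref{thm:domain} componentwise with error $\varepsilon/2$ and paste as in the proof of Theorem~\ref{thm:closed}. Your separate treatment of components with $L_i=0$ is harmless but not needed, since Theorem~\ref{thm:domain} already covers that case with the constant extension.
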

\begin{proof}
The cases $A=\emptyset$ and $A=X$ are immediate. Every AMLE
satisfies the displayed equalities by comparison on $\Omega_i$.
For any extension of $u$, continuity gives
$\Lip(v,\Omega_i)\geq\Lip(u,\partial\Omega_i)$, so each equality
is equivalent to a collection of closed pairwise Lipschitz
inequalities. These inequalities persist under uniform convergence
on compact sets.

Conversely, suppose $v$ satisfies them, and fix a nonempty compact
$Q\subset X$ and $\varepsilon>0$. In each component whose closure
meets $Q$, apply Theorem~\ref{thm:domain} to $v|_{\cl\Omega_i}$
with compact set $Q\cap\cl\Omega_i$ and error $\varepsilon/2$;
in each remaining component choose any saturated extension.
The preceding proof pastes these maps with $u$ into an AMLE
$\widetilde u$, and
\begin{equation*}
 \sup\{\|\widetilde u(x)-v(x)\|\mid x\in Q\}
 \leq\varepsilon/2<\varepsilon.\qedhere
\end{equation*}
\end{proof}

For example, let $X=\R$, choose a unit vector $e\in Y$, and
prescribe $u(0)=u(1)=0$ and $u(2)=e$ on $A=\{0,1,2\}$.
We have $\Lip(u,A)=1$, and every AMLE vanishes
on $[0,1]$, where the component boundary constant is zero.
Prescribing in addition $v(1/2)=e/2$ still gives $1$-Lipschitz
data and hence, by Kirszbraun's theorem, an optimal global
extension. This optimal extension cannot be approximated by AMLEs
at $1/2$.
In fact, the AMLE is unique: it is affine on $[1,2]$ and constant
on each side of that interval. Global optimality alone therefore
does not capture the component bounds in Corollary~\ref{cor:density}.

\section{Uniqueness, the scalar case, and uniform convergence}\label{sec:consequences}

On a connected domain, density has an immediate consequence for
uniqueness: an AMLE can be unique only if the optimal extension
is already unique. Kirszbraun's theorem makes the latter condition
pointwise. It asks whether the boundary inequalities force a single
value at every interior point.

\begin{corollary}[Uniqueness]\label{cor:uniqueness}
Let $X,Y$ be Hilbert spaces with $\dim Y\geq2$, let
$\Omega\subsetneq X$ be nonempty, connected and open, and let
$u\colon\partial\Omega\to Y$ be Lipschitz. Put
$L=\Lip(u,\partial\Omega)$. The following conditions are equivalent:
\begin{enumerate}
\item $u$ has a unique absolutely minimising extension to
      $\cl\Omega$;
\item $u$ has a unique $L$-Lipschitz extension to $\cl\Omega$;
\item for every $x\in\Omega$, the admissible-value set
\begin{equation*}
 \mathcal P_L(u,\partial\Omega,x)
 =\{p\in Y\mid \|p-u(a)\|\leq L\|x-a\|
       \text{ for every }a\in\partial\Omega\}
\end{equation*}
consists of exactly one point.
\end{enumerate}
If these conditions fail, $u$ has infinitely many absolutely
minimising extensions.
\end{corollary}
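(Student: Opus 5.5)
The plan is to prove the cycle (ii)$\Leftrightarrow$(iii), (ii)$\Rightarrow$(i) and (i)$\Rightarrow$(ii), and to read off the final assertion from the proof of (i)$\Rightarrow$(ii). If $L=0$, then (iii) holds trivially, since every admissible value equals the constant boundary value (the boundary is nonempty because $\Omega$ is a proper nonempty open subset of a connected space), so all three conditions hold. We may therefore normalise $L=1$.

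For (ii)$\Leftrightarrow$(iii) we use Kirszbraun's theorem (Theorem~\ref{thm:kirszbraun}). Every $p\in\mathcal P_L(u,\partial\Omega,x)$ is the value at $x$ of some $L$-Lipschitz extension of $u$ from $\partial\Omega$ to $X$; restrict it to $\cl\Omega$. Hence two admissible values at a single point give two distinct optimal extensions. Conversely, two distinct optimal extensions differ at some point of $\cl\Omega$. That point cannot lie on $\partial\Omega$, so it lies in $\Omega$, and both values there are admissible. Next, (ii)$\Rightarrow$(i) holds because Theorem~\ref{thm:closed} gives at least one AMLE, which is optimal; if the optimal extension is unique, this AMLE is the only one. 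To apply Theorem~\ref{thm:closed} to data on $\partial\Omega$ rather than on a closed set in $X$, we take the component $\Omega$ of $X\setminus\partial\Omega$ and restrict the resulting map to $\cl\Omega$. Alternatively, Theorem~\ref{thm:domain} gives a saturated extension, which is already an AMLE.

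The substantive direction is (i)$\Rightarrow$(ii), together with the infinitude claim, and here we use Theorem~\ref{thm:domain}. Suppose (iii) fails, so some $x_0\in\Omega$ has two admissible values $p\neq q$. By Kirszbraun there are optimal extensions $v_0,v_1$ on $\cl\Omega$ with $v_0(x_0)=p$ and $v_1(x_0)=q$. Since $\mathcal P_L$ is convex, the maps $v_t=(1-t)v_0+tv_1$, $t\in[0,1]$, are also $L$-Lipschitz extensions, and their values $v_t(x_0)$ are pairwise distinct. Apply Theorem~\ref{thm:domain} to each $v_t$ with $Q=\{x_0\}$ and $\varepsilon_t$ smaller than half the gap separating $v_t(x_0)$ from the other values under consideration. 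For any finite or countable set of parameters, for example $t=1/n$, we can choose these $\varepsilon$'s so that the resulting saturated extensions $\widetilde u_t$ have pairwise distinct values at $x_0$. Each saturated optimal extension is an AMLE, so we obtain infinitely many AMLEs. This shows both that (i) fails and that the final assertion holds.

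The main obstacle is mild and consists of two checks. First, we must confirm that a saturated optimal extension really satisfies the AMLE comparison~\eqref{eq:amle} for data on $\partial\Omega$; this holds because~\eqref{eq:saturated} gives equality of $\Lip(\widetilde u,V)$ and $\Lip(\widetilde u,\partial V)$ with the common value $L$. Second, we must ensure that the approximation radii are chosen small enough to separate the values at $x_0$. Choosing the parameters $t_n$ with $\|v_{t_n}(x_0)-v_{t_m}(x_0)\|=|t_n-t_m|\,\|p-q\|$ and taking $\varepsilon_n$ below one third of the minimal gap to the other $v_{t_m}(x_0)$ handles this.
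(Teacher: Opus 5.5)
Your proposal is correct and follows essentially the paper's route: Kirszbraun's theorem identifies $\mathcal P_L(u,\partial\Omega,x)$ with the set of values at $x$ of optimal extensions, and the density of saturated extensions (Theorem~\ref{thm:domain}), combined with convexity of the optimal class, rules out finitely many AMLEs once that class has two members. The paper states this abstractly (a finite dense set of AMLEs would be closed, hence the whole convex class, hence a singleton), whereas you make it explicit with the segment $v_t$ and separated approximation radii at $x_0$; note also that $v_t$ is $L$-Lipschitz because the class of $L$-Lipschitz extensions is convex by the triangle inequality, not because $\mathcal P_L$ is convex.
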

\begin{proof}
Let $\mathcal E$ be the convex set of $L$-Lipschitz extensions
of $u$, with the topology of uniform convergence on compact sets.
The AMLEs form a nonempty dense subset by
Theorem~\ref{thm:domain}. If there were finitely many, they
would be closed and hence equal $\mathcal E$; convexity would
then force $\mathcal E$ to be a singleton. Conversely, existence
makes the unique member of $\mathcal E$ an AMLE. This proves
the equivalence of the first two conditions and the last assertion.

For each $x\in\Omega$, Kirszbraun's theorem identifies
$\mathcal P_L(u,\partial\Omega,x)$ with the values $v(x)$
attained by $v\in\mathcal E$. Thus the second and third
conditions are equivalent.
\end{proof}

For an arbitrary closed prescribed set, the criterion applies to
each component of its complement. More precisely, when
$\dim Y\geq2$, an AMLE is unique if and only if every component
boundary datum has a unique optimal extension. In the
admissible-value criterion, one uses
$L_i=\Lip(u,\partial\Omega_i)$ on $\Omega_i$.
These choices are independent: Lemma~\ref{lem:pasting} pastes
any collection of componentwise optimal extensions with the
original datum into a global optimal extension.

\begin{remark}[The contrast with bounded scalar data]\label{rem:dichotomy}
Bounded real-valued Lipschitz data on a nonempty closed subset of
a Hilbert space have a unique AMLE in the sense of
\eqref{eq:amle}, by \cite[Theorem~1.4]{PSSW}. On bounded
Euclidean domains, uniqueness is Jensen's theorem \cite{Jensen}.
Thus, for bounded scalar data, absolute minimality selects one
extension even when optimal extensions are not unique. For targets
of dimension at least two, Corollary~\ref{cor:uniqueness} gives
the different conclusion: any freedom among optimal extensions
on a connected domain entails infinitely many AMLEs, dense in
the optimal class. We make no corresponding uniqueness assertion
for arbitrary unbounded scalar data. The cited scalar theorem
assumes boundedness, and \cite[Section~5.2]{PSSW} gives
nonuniqueness examples on general length spaces.

Even a smooth scalar boundary datum can have no saturated
extension. Identify $\R^2$ with $\mathbb C$, put $\Omega=B_1(2)$,
and let $f(z)=\arg z$ on $\cl\Omega$. This is an AMLE of its
boundary datum. Indeed, for an open $V\subset\Omega$ and $z\in V$,
the component of $V\cap\{|w|=|z|\}$ through $z$ is a circular arc
with distinct endpoints $p,q\in\partial V$. Its length is
$|z|\,|f(p)-f(q)|$ and is at least $|p-q|$. Hence
\begin{equation*}
 \Lip(f,\partial V)\geq\frac1{|z|}=\|\nabla f(z)\|,
\end{equation*}
and Lemma~\ref{lem:pasting}\textup{(ii)} gives comparison on $V$.
The Lipschitz constant on $\cl\Omega$ is one, detected by the
tangential derivative of the boundary datum at $z=1$, whereas
\begin{equation*}
 \Lip(f,B_{1/4}(2))\leq4/7<1.
\end{equation*}
Thus $f$ is not saturated. A saturated extension would itself be
an AMLE; uniqueness therefore rules out all saturated extensions
of this smooth datum and shows why Theorem~\ref{thm:domain}
requires $\dim Y\geq2$.
\end{remark}

Comparison on unbounded sets also prevents scalar extensions from
exceeding any finite bounds of the prescribed data. The distinction
can already be seen on the real line: $w(x)=x$ is an optimal
extension of $u(a)=a$ from $A=(-\infty,0]$ and satisfies comparison
on every bounded open set, yet $w(x)>\sup u(A)=0$ for $x>0$.
The missing comparison is on the ray $(0,\infty)$.
For vector-valued AMLEs, the corresponding requirement that the
image lie in the closed convex hull of the data fails, even in
the approximation sense stated below.

\begin{proposition}[A maximum principle for real-valued AMLEs]\label{prop:maximum}
Let $X$ be a Hilbert space, let $A\subset X$ be nonempty and
closed, let $u\colon A\to\R$ be Lipschitz, and let
$\widetilde u\colon X\to\R$ be an AMLE of $u$. Then
\begin{equation*}
 \inf\{u(a)\mid a\in A\}\leq\widetilde u(x)\leq\sup\{u(a)\mid a\in A\}
 \text{ for every }x\in X,
\end{equation*}
where an infinite bound is vacuous. By contrast, let
$X=Y=\R^2$, $A=\{0,e_1\}$, $u(0)=0$, and $u(e_1)=e_1$. Then for
every optimal extension $v_0$ of $u$, every compact $Q\subset X$
and every $\varepsilon>0$, there is an AMLE of $u$ within
$\varepsilon$ of $v_0$ on $Q$ whose image is not contained in
$\cl(\conv u(A))=[0,e_1]$.
\end{proposition}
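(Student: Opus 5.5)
The proof has two independent parts: the scalar maximum principle, and the vector-valued counterexample near an arbitrary optimal extension.

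\emph{Scalar maximum principle.} Put $M=\sup u(A)$, assumed finite; the lower bound follows by applying the same argument to $-u$. Suppose $\widetilde u(x_0)>M$ for some $x_0$, and choose $c$ with $M<c<\widetilde u(x_0)$. Let $V=\{x\mid\widetilde u(x)>c\}$. This set is open, nonempty, and disjoint from $A$, since $u\le M<c$ on $A$; it may be unbounded. By continuity $\widetilde u=c$ on $\partial V$, so $\Lip(\widetilde u,\partial V)=0$. Note that $\partial V\neq\emptyset$: $V$ is proper because $A\neq\emptyset$, and $X$ is connected. Comparison \eqref{eq:amle} then gives $\Lip(\widetilde u,V)=0$. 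Hence $\widetilde u$ is constant on each component of $V$, and by continuity it takes the value $c$ on the boundary of the component containing $x_0$. That boundary is nonempty, because the component is a proper open subset of the connected space $X$. This contradicts $\widetilde u(x_0)>c$.

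\emph{The counterexample.} Here $A=\{0,e_1\}$ and $L=1$, and the complement $\Omega=X\setminus A$ is connected with $\partial\Omega=A$. Theorem~\ref{thm:domain} applies verbatim to $\Omega$, since it was stated for $\Omega\subsetneq X$ with arbitrary closed boundary data. Given $v_0$, $Q$ and $\varepsilon$, I would first modify $v_0$ into an optimal extension $v_1$ that is within $\varepsilon/2$ of $v_0$ on $Q$ and leaves the segment $[0,e_1]$ at some point $x_1$. Then Theorem~\ref{thm:domain} applied to $v_1$, with compact set $Q\cup\{x_1\}$ and error $\min\{\varepsilon/2,\ \tfrac12\dist(v_1(x_1),[0,e_1])\}$, yields a saturated extension $\widetilde u$, hence an AMLE. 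It is within $\varepsilon$ of $v_0$ on $Q$, and $\widetilde u(x_1)\notin[0,e_1]$.

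To construct $v_1$, choose a point $x_1$ far away, with $R=\|x_1\|\gg1$, outside a large ball containing $Q$. At $x_1$ the admissible set is $\cl B_{R}(0)\cap\cl B_{\|x_1-e_1\|}(e_1)$, a lens of width about $R$, so there is a value $p$ with $\dist(p,[0,e_1])\ge1$ and a margin of about $R$. The remaining issue is compatibility with $v_0|_Q$. I would first replace $v_0$ on $Q$ by the convex combination $(1-t)v_0+ts$, where $s$ is an extension with positive margin on $Q$; such an $s$ exists by Lemma~\ref{lem:addition}, because $D(u,A)=A$ here, as the example's lens-shaped admissible sets show. Since $\Phi$ is convex, this gives a uniform gap $\delta>0$ on the finite net $Q'\subset Q\setminus A$, while staying within $\varepsilon/4$ of $v_0$; points of $Q\cap A$ carry forced values. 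For $x_1$ far enough away, the values on $Q'$ are compatible with $p$ up to the triangle inequality, provided $p$ is also chosen close to $v_0(x_1)$ plus a bounded offset. I would take $p=v_0(x_1)+w$, where $w$ is a bounded vector pointing away from $[0,e_1]$ and chosen to keep admissibility. Kirszbraun's theorem applied to the finite datum on $A\cup Q'\cup\{x_1\}$ then gives $v_1$, and a net argument handles all of $Q$.

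\emph{Main obstacle.} The step I expect to be hardest is exhibiting such a $p$ with an explicit margin: one needs $\|p-v_0(q)\|\le\|x_1-q\|$ for the net points $q$, together with admissibility against $0$ and $e_1$. A simpler alternative may be available: a $1$-Lipschitz $v_0$ with $v_0(0)=0$ and $v_0(e_1)=e_1$ must already be unconstrained far out. Indeed, $v_0(x_1)$ ranges over the lens, which contains points at distance about $R$ from the segment, and pushing the value toward them is admissible by a convex-combination argument using the lens's interior.
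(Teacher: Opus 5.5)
Your scalar maximum principle is correct and is essentially the paper's argument. The paper works with the level $M$ itself, and notes that $\Lip(\widetilde u,V)=0$ already makes $\widetilde u$ constant on all of $V$, so no passage to components is needed. Your plan for the vector-valued part is also viable and differs from the paper's. However, the step you flag as the main obstacle is never carried out, and both supporting claims you give for it are wrong.

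\emph{First}, $D(u,A)\neq A$. For $x$ in the open segment $(0,e_1)$, the balls $\cl B_{\|x\|}(0)$ and $\cl B_{1-\|x\|}(e_1)$ are externally tangent, so $D(u,A)=[0,e_1]$. Hence Lemma~\ref{lem:addition} cannot be applied to a net $Q'\subset Q\setminus A$ that meets this segment; every optimal extension is the identity there.

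\emph{Second}, compatibility of $p=v_0(x_1)+w$ with the net values does not follow ``up to the triangle inequality''. That inequality only gives
\[
 \|p-a(q)\|\leq\|x_1-q\|+\|w\|+t\|v_0(q)-s(q)\|,
\]
which exceeds the permitted bound $\|x_1-q\|$. The gap $\delta$ is a margin against $A$ only, so it gives no slack in the constraints between $x_1$ and $Q'$. Taking $x_1$ far away does not help by itself when $v_0$ nearly preserves distances from $x_1$: for $v_0=\Id$ all these constraints are tight before you perturb anything.

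The repair is to drop the margin step and stop tying $p$ to $v_0(x_1)$. Let $\rho\geq1$ bound the norms of points of $Q\cup A$. Since $v_0(0)=0$, also $\|v_0(y)\|\leq\rho$ for $y\in Q\cup A$. Choose $\|x_1\|\geq2\rho+2$ and $p=e_2$. Then
\[
 \|e_2-v_0(y)\|\leq1+\rho\leq\|x_1\|-\rho\leq\|x_1-y\|
 \qquad(y\in Q\cup A).
\]
So Kirszbraun's theorem gives an optimal extension $v_1$ with $v_1=v_0$ on $Q$ and $v_1(x_1)=e_2$. Theorem~\ref{thm:domain} applied to $v_1$ on $Q\cup\{x_1\}$, with error $\min\{\varepsilon,1/2\}$, then finishes as you intended.

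The paper instead perturbs globally, $v_t=(1-t)v_0+t\operatorname{id}_{\R^2}$. This is optimal because the identity is itself an optimal extension of this datum. The second coordinate of $v_t(e_2)$ is affine in $t$ and equals one at $t=1$, so it is nonzero for small $t>0$. The paper's route is a one-line perturbation that exploits the specific datum. The repaired far-point route keeps $v_0$ exactly on $Q$ and uses only the boundedness of $Q\cup A$.
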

\begin{proof}
Suppose $M=\sup\{u(a)\mid a\in A\}<\infty$ and set
$V=\{x\in X\mid\widetilde u(x)>M\}$. If $V$ were nonempty,
it would be a proper open subset of the connected space $X$,
with nonempty boundary and $\widetilde u=M$ on $\partial V$.
Comparison would make $\widetilde u$ constant on $V$, and
continuity at a boundary point would make this constant $M$,
contrary to the definition of $V$. Hence $\widetilde u\leq M$.
Apply the same argument to $-\widetilde u$ for the lower bound.

For the vector-valued example, perturb $v_0$ towards the identity:
\begin{equation*}
 v_t=(1-t)v_0+t\operatorname{id}_{\R^2},\qquad 0\leq t\leq1.
\end{equation*}
This preserves the prescribed values and the optimal Lipschitz
constant, while $v_t\to v_0$ uniformly on $Q$ as $t\to0$.
The second coordinate of $v_t(e_2)$ is an
affine function of $t$ that equals one at $t=1$, so it vanishes
for at most one $t$. Choose $t>0$ with nonzero second coordinate
and small enough that
$\|v_t(x)-v_0(x)\|<\varepsilon/2$ for all $x\in Q$, and put
$d=\dist(v_t(e_2),[0,e_1])>0$.
Theorem~\ref{thm:domain} on the connected domain
$\R^2\setminus A$ gives an AMLE approximating $v_t$ on
$Q\cup\{e_2\}$ with error less than
$\min\{\varepsilon/2,d/2\}$. The resulting AMLE is within
$\varepsilon$ of $v_0$ on $Q$ and still maps $e_2$ outside
$[0,e_1]$.
\end{proof}

Density gives a direct way to test closure under uniform
convergence. On a bounded finite-dimensional domain, it suffices
to find one optimal extension that fails comparison on a smaller
open set. The following example does this for a fixed smooth
boundary datum on a disk.

\begin{example}[Failure of closure under uniform convergence]\label{ex:uniform-nonclosure}
Identify $\R^2$ with $\mathbb C$, let
$\mathbb D=\{z\in\mathbb C\mid |z|<1\}$, and prescribe
\begin{equation*}
 u(z)=\tfrac12 z^2\text{ for }z\in\partial\mathbb D.
\end{equation*}
Up to scaling, this is the datum used in
\cite[Example~3.1]{SheffieldSmart} to demonstrate nonuniqueness.
We have $\Lip(u,\partial\mathbb D)=1$, since
$|u(z)-u(w)|/|z-w|=|z+w|/2$ for distinct boundary points.

Adjoin the values
\begin{equation*}
 v(z)=0\quad\text{when }|z|=\tfrac12,
 \qquad v(0)=\tfrac12.
\end{equation*}
Together with $u$, these data are $1$-Lipschitz: between the
two circles the image distance is $1/2$ and the source distance
is at least $1/2$, while pairs involving the origin have image
distance at most their source distance. Kirszbraun's theorem
therefore gives an optimal extension $v_0$ to $\cl\mathbb D$.
It is not absolutely minimising: on $B_{1/2}(0)$ its boundary
values are all zero, whereas $v_0(0)=1/2$.

Theorem~\ref{thm:domain}, applied with $Q=\cl\mathbb D$,
provides AMLEs $\widetilde u_j$ of the same datum with
$\|\widetilde u_j-v_0\|_{\infty}<1/j$.
Thus the AMLEs of this fixed smooth boundary datum are not closed
in the uniform topology.
\end{example}

Two questions about the approximation theorem remain. The first
is whether the approximation can be uniform on $\cl\Omega$ when
$\Omega$ is bounded but $X$ is infinite-dimensional; the proof
uses finite nets of compact sets, and hence does not provide such
uniform control on a general bounded domain. The second is
whether, for finite-dimensional $X$ and bounded $\Omega$, the
AMLEs form a residual subset of the optimal extensions, as the
length-preserving extensions in \cite{KSS} do in their setting;
the density theorem alone does not establish this, since
saturation requires comparison on the boundaries of all open sets
and its expression as a $G_\delta$ condition is not evident.
For separable sources, both questions can be answered
affirmatively by a Baire category argument on planar sections,
which combines the reflection estimate
\eqref{eq:quadratic-actual-margin}, Lemma~\ref{lem:nonseparation}
and Brehm's theorem in the plane: the saturated extensions then
contain a $G_\delta$ subset, for the topology of uniform
convergence on compact sets, that is dense for uniform
convergence on bounded sets. We do not pursue this here, as that
argument yields none of the structure in
Remark~\ref{rem:structure}. For nonseparable sources, uniform
approximation on bounded domains remains open.

\section{Linear absolutely minimising extensions}\label{sec:linear}

A linear map $T$ has Lipschitz constant $\|T\|$ on every
nonempty open set. Absolute minimality therefore asks whether
the boundary of every admissible comparison set also detects
this norm. The criterion below requires planes on which $T$
nearly attains its norm, uniformly over all unit vectors of the
plane. If no such planes exist, a hyperplane on which the norm
drops provides a halfspace that violates comparison.

\begin{proposition}[A linear characterisation]\label{prop:linear}
Let $T\colon X\to Y$ be a bounded linear map between Hilbert
spaces, and put $L=\|T\|>0$. The following are equivalent:
\begin{enumerate}
\item for every $\varepsilon>0$, there is a two-dimensional
linear subspace $E\subset X$ such that
\begin{equation*}
 \|Te\|\geq(L-\varepsilon)\|e\|
 \text{ for every }e\in E;
\end{equation*}
\item for every closed set $A\subset X$ with $\Lip(T,A)=L$,
the map $T$ is an AMLE of $T|_A$.
\end{enumerate}
When these conditions hold, every such set $A$ satisfies
\begin{equation*}
 \Lip(T,V)=\Lip(T,\partial V)=L
 \text{ for every nonempty open }V\subset X\setminus A.
\end{equation*}
\end{proposition}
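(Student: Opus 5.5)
The plan is to prove the two implications separately. The implication \textup{(1)}$\Rightarrow$\textup{(2)} will come from the nonseparation Lemma~\ref{lem:fibre-nonseparation}, applied with near-isometric planes in place of isometric ones. The converse will come from a spectral argument that produces a halfspace violating comparison. The final displayed assertion (saturation) is what the first implication actually proves. Optimality is automatic: $\Lip(T,X)=\|T\|=L=\Lip(T,A)$. Moreover $\Lip(T,V)=L$ for every nonempty open $V$, since $V$ contains a ball and $T$ is linear. So everything reduces to showing $\Lip(T,\partial V)\geq L$.

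\emph{From \textup{(1)} to saturation.} Fix a nonempty open $V\subset X\setminus A$ and $\varepsilon\in(0,L)$, and choose the plane $E$ given by \textup{(1)}.
\begin{enumerate}
\item[\textup{(a)}] If $\cl V=X$, then $\partial V=X\setminus V\supset A$, so $\Lip(T,\partial V)\geq\Lip(T,A)=L$.
\item[\textup{(b)}] Otherwise $X\setminus\cl V$ is a nonempty open set disjoint from $V$. Suppose, for contradiction, that $\Lip(T,\partial V)<L-\varepsilon$.
\begin{enumerate}
\item[\textup{(i)}] Two distinct points of $\partial V$ cannot differ by a vector of $E$: for $x,y\in\partial V$ with $x-y\in E$ we would get $\|Tx-Ty\|\geq(L-\varepsilon)\|x-y\|$.
\item[\textup{(ii)}] Hence the closed set $S=\partial V$ meets each affine plane parallel to $E$ in at most one point.
\item[\textup{(iii)}] Lemma~\ref{lem:fibre-nonseparation} with $O=X$ then shows that $X\setminus\partial V$ is connected.
\item[\textup{(iv)}] But $X\setminus\partial V$ is the disjoint union of the nonempty open sets $V$ and $X\setminus\cl V$, a contradiction.
\end{enumerate}
\end{enumerate}
Thus $\Lip(T,\partial V)\geq L-\varepsilon$ for every $\varepsilon$, so $\Lip(T,\partial V)=L$. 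This gives the displayed identity and, in particular, \textup{(2)}.

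\emph{From \textup{(2)} to \textup{(1)}.} Suppose \textup{(1)} fails for some $\varepsilon\in(0,L)$.
\begin{enumerate}
\item[\textup{(a)}] Let $H$ be the range of the spectral projection of $|T|=(T^*T)^{1/2}$ for the interval $[L-\varepsilon,L]$. Every $e\in H$ satisfies $\|Te\|=\||T|e\|\geq(L-\varepsilon)\|e\|$. Since no two-dimensional subspace has this property, $\dim H\leq1$.
\item[\textup{(b)}] Because $L=\max\sigma(|T|)$, the projection is nonzero, so $H=\R a$ for a unit eigenvector $a$ with $|T|a=La$, that is, $\|Ta\|=L$.
\item[\textup{(c)}] On $a^\perp$ the spectral measure of $|T|$ lives on $[0,L-\varepsilon)$. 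Hence $\|Tx\|\leq(L-\varepsilon)\|x\|$ for $x\perp a$.
\item[\textup{(d)}] Take the closed halfspace $A=\{x\mid\langle x,a\rangle\leq0\}$. It contains the ray $-\R_{\geq0}a$, so $\Lip(T,A)=L$.
\item[\textup{(e)}] Take the open halfspace $V=\{x\mid\langle x,a\rangle>0\}=X\setminus A$. Then $\partial V=a^\perp$, so $\Lip(T,\partial V)\leq L-\varepsilon<L=\Lip(T,V)$.
\end{enumerate}
So $T$ is not an AMLE of $T|_A$, contradicting \textup{(2)}.

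The only delicate step is case \textup{(b)} of the first implication. There one must make sure that $\partial V$ really separates, which is why the case $\cl V=X$ is handled separately via $A\subset\partial V$. One must also check that near-isometry in $E$, rather than exact isometry, still gives the one-point-per-plane property. It does, because the assumed strict bound $L-\varepsilon$ on $\partial V$ is exactly what is violated along $E$.
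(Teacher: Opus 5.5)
Your proof is correct. For \textup{(1)}$\Rightarrow$\textup{(2)} it is the paper's argument. The paper also applies Lemma~\ref{lem:fibre-nonseparation} with $O=X$ to $S=\partial V$. Your case split ($\cl V=X$ versus $X\setminus\cl V\neq\emptyset$) is the same dichotomy the paper settles in one step: $V$ is nonempty, open and relatively closed in the connected set $X\setminus S$, so $V=X\setminus S$ and $A\subset S$.

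The converse is where your route genuinely differs. The paper stays elementary. It fixes $0<c<L$ with no plane satisfying $\|Te\|\geq c\|e\|$, takes \emph{any} $x$ with $q(x)=\|Tx\|^2-c^2\|x\|^2>0$, and lets $F$ be the kernel of $\ell(y)=\langle Tx,Ty\rangle-c^2\langle x,y\rangle$. For $y\in F$ the cross term of $q$ on $\Span\{x,y\}$ vanishes, so $q(y)>0$ would make $q$ positive on a plane. Hence $\|T|_F\|\leq c$, and the halfspaces $\{\ell\leq0\}$, $\{\ell>0\}$ violate comparison. This needs neither the functional calculus nor norm attainment.

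Your spectral route is heavier but gives more structure. Failure of \textup{(1)} forces $L$ to be an isolated simple eigenvalue of $|T|$, attained at $a$, with $\|T|_{a^\perp}\|\leq L-\varepsilon$. This is the infinite-dimensional counterpart of the paper's remark on top singular values of multiplicity at least two. Choosing $x=a$ in the paper's construction gives $\ell=(L^2-c^2)\langle a,\cdot\rangle$, which is exactly your hyperplane $a^\perp$.

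In step (b), say why the eigenvalue of $a$ is $L$ rather than merely some value in $[L-\varepsilon,L]$: $L\in\sigma(|T|)$, while the restriction of $|T|$ to $a^\perp$ has spectrum in $[0,L-\varepsilon]$. Step (d) does not actually need this, since $A$ has nonempty interior and linearity already gives $\Lip(T,A)=L$.
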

\begin{proof}
Assume the first condition and let $A\subset X$ be closed with
$\Lip(T,A)=L$. Linearity gives $\Lip(T,V)=L$ on every
nonempty open set. Suppose some nonempty open
$V\subset X\setminus A$ has
$k=\Lip(T,\partial V)<L$, and choose a two-dimensional subspace
$E$ on which $\|Te\|\geq m\|e\|$ for some $m>k$.

Put $S=\partial V$. Each affine plane parallel to $E$
meets $S$ in at most one point: for any two points $p,q$ of
such an intersection,
\begin{equation*}
 m\|p-q\|\leq\|T(p-q)\|\leq k\|p-q\|,
\end{equation*}
and $m>k$ forces $p=q$. Lemma~\ref{lem:fibre-nonseparation}
with $O=X$ shows that $X\setminus S$ is connected.
The set $V$ is nonempty, relatively open and closed in
$X\setminus S$, so $V=X\setminus S$. Thus $A\subset S$,
contradicting $\Lip(T,A)=L>k$.

Conversely, suppose the first condition fails. Choose $0<c<L$
such that no two-dimensional subspace satisfies
$\|Te\|\geq c\|e\|$ for all of its vectors, and put
\begin{equation*}
 q(z)=\|Tz\|^2-c^2\|z\|^2.
\end{equation*}
Since $c<\|T\|$, there is an $x\in X$ with $q(x)>0$.
Define
\begin{equation*}
 \ell(y)=\langle Tx,Ty\rangle-c^2\langle x,y\rangle,
 \qquad F=\ker\ell.
\end{equation*}
The functional $\ell$ is nonzero because $\ell(x)=q(x)>0$.
For $y\in F$ and $s,t\in\R$, the cross term vanishes:
\begin{equation*}
 q(sx+ty)=s^2q(x)+t^2q(y).
\end{equation*}
If $q(y)>0$, this would make $q$ positive on the plane
$\Span\{x,y\}$, contrary to the choice of $c$.
Thus $\|T|_F\|\leq c$.
Now take $A=\{\ell\leq0\}$ and $V=\{\ell>0\}$.
Both contain nonempty open sets, so
\begin{equation*}
 \Lip(T,A)=\Lip(T,V)=L>c\geq\Lip(T,F)
 =\Lip(T,\partial V).
\end{equation*}
Hence the second condition fails.
\end{proof}

In finite dimensions, the criterion says that the largest singular
value of $T$ has multiplicity at least two. In infinite dimensions,
the norm need not be attained; planes with lower bounds approaching
$\|T\|$ suffice.

Unbounded comparison sets are essential to this characterisation.
Every bounded linear map satisfies comparison on bounded open sets.
Indeed, fix $x\in V$ and a unit vector $e$. If $V$ is bounded,
the component of $\{t\in\R\mid x+te\in V\}$ containing zero
is a bounded interval $(a,b)$, whose endpoints give two boundary
points with
\begin{equation*}
 \frac{\|T(x+be)-T(x+ae)\|}{\|(x+be)-(x+ae)\|}=\|Te\|.
\end{equation*}
Taking the supremum over $e$ gives
$\Lip(T,\partial V)=\|T\|=\Lip(T,V)$.

For complex-linear maps, a complex line through a vector where
the norm is nearly attained supplies a real plane of the required
type. On bounded planar domains, the affine maps
$z\mapsto az+b$ are also tight in the sense of Sheffield and Smart
\cite[Theorem~1.6]{SheffieldSmart}. The following consequence of
Proposition~\ref{prop:linear} applies to arbitrary complex Hilbert spaces.

\begin{corollary}[Complex-linear maps]\label{cor:complex-linear}
Let $T\colon X\to Y$ be a bounded complex-linear map between
complex Hilbert spaces. For every closed set $A\subset X$
with $\Lip(T,A)=\|T\|$, the map $T$ is an AMLE of $T|_A$.
\end{corollary}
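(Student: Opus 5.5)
The plan is to reduce the corollary to Proposition~\ref{prop:linear} by regarding $X$ and $Y$ as real Hilbert spaces. We equip them with the real inner product $\operatorname{Re}\langle\cdot,\cdot\rangle$. This induces the same norm, so the same distances, so Lipschitz constants, AMLEs and the condition $\Lip(T,A)=\|T\|$ are unchanged. The map $T$ is real-linear and bounded, with the same operator norm $L=\|T\|$.

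If $L=0$, the map is constant and trivially an AMLE of its restriction. So assume $L>0$, and verify condition~(1) of Proposition~\ref{prop:linear}.

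Given $\varepsilon>0$, choose a unit vector $x$ with $\|Tx\|\geq L-\varepsilon$. Let $E=\{\lambda x\mid\lambda\in\mathbb C\}$, the real span of $x$ and $ix$. This subspace is two-dimensional over $\R$, because $ix$ is not a real multiple of $x$: indeed $\operatorname{Re}\langle x,ix\rangle=0$ and $ix\neq0$. Every $e\in E$ has the form $e=\lambda x$, and complex linearity gives
\begin{equation*}
 \|Te\|=|\lambda|\,\|Tx\|\geq(L-\varepsilon)|\lambda|=(L-\varepsilon)\|e\|.
\end{equation*}
Hence condition~(1) holds, and condition~(2) yields the corollary. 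Indeed it yields the stronger saturation statement at the end of the proposition.

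There is no real obstacle in this argument. The only points needing care are that the realification preserves norms and that a complex line is a genuine real plane, and both are immediate.
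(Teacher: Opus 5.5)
Your proposal is correct and follows the paper's proof essentially verbatim: treat the constant case separately, regard the spaces as real Hilbert spaces, and use the real plane $\Span_{\R}\{x,ix\}$ for a unit vector $x$ with $\|Tx\|$ close to $\|T\|$ to verify the first condition of Proposition~\ref{prop:linear}. The details you add, namely that $\operatorname{Re}\langle\cdot,\cdot\rangle$ induces the same norm and that $x$ and $ix$ are real-linearly independent, are exactly the points the paper leaves implicit.
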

\begin{proof}
The zero operator is constant. Otherwise, view the spaces as real
Hilbert spaces. For any unit vector $x\in X$, complex linearity
gives
\begin{equation*}
 \|Te\|=\|Tx\|\|e\|
 \text{ for every }e\in\Span_{\R}\{x,ix\}.
\end{equation*}
Choosing $x$ with $\|Tx\|$ arbitrarily close to $\|T\|$
verifies the first condition of Proposition~\ref{prop:linear}.
\end{proof}

The use of approximate norm attainment matters. An AMLE can have
Lipschitz constant one on every nonempty comparison set even when
no pair of distinct points attains Lipschitz ratio one.

\Needspace{12\baselineskip}
\begin{example}[An AMLE with singleton leaves]\label{ex:strict-linear}
Let $(e_j)_{j\geq1}$ be the standard orthonormal basis of
$X=Y=\ell^2$, and define
\begin{equation*}
 Te_j=\frac{j}{j+1}e_j,
 \text{ and }A=\{0\}\cup\{e_j/j\mid j\geq1\}.
\end{equation*}
The prescribed set $A$ and its image $T(A)$ are compact and
have infinite-dimensional affine span. The ratios for the pairs
$0,e_j/j$ tend to one, so $\Lip(T,A)=\|T\|=1$. The span of
any two sufficiently high coordinate vectors satisfies the
hypothesis of Proposition~\ref{prop:linear}. Thus $T$ is an
AMLE of $T|_A$, although
\begin{equation*}
 \|Tx-Ty\|<\|x-y\|\text{ whenever }x\ne y.
\end{equation*}
Every leaf of this absolutely minimising map is therefore a
singleton.
\end{example}

\Needspace{11\baselineskip}
The final construction extends prescribed linear data by completing
the operator to a scalar multiple of a coisometry. The operator
$D$ below specifies the missing term in the identity
$T_0T_0^*+D^2=L^2\Id_Y$. An isometric copy of the closure of its range in the
unused source directions is enough to realise this completion.

\begin{corollary}[Coisometric completion]\label{cor:completion}
Let $E$ be a closed linear subspace of a Hilbert space $X$, let
$Y$ be a Hilbert space with $\dim Y\geq2$, and let
$T_0\colon E\to Y$ be bounded and linear, with $L=\|T_0\|>0$.
Define
\begin{equation*}
 D=(L^2\Id_Y-T_0T_0^*)^{1/2},
 \text{ and }K=\cl(\operatorname{ran}D).
\end{equation*}
Suppose that there is a linear isometric embedding
$J\colon K\to E^\perp$. If $A\subset E$ is closed and
$\Lip(T_0,A)=L$, then
\begin{equation*}
 \widetilde u(e+z)=T_0e+DJ^*z
 \text{ for }e\in E,\ z\in E^\perp
\end{equation*}
is an AMLE of $T_0|_A$.
\end{corollary}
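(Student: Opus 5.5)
The plan is to reduce the statement to Proposition~\ref{prop:linear}. Put $\widetilde T=\widetilde u$, a bounded linear map $X\to Y$ that extends $T_0$ on $E$, so it extends $T_0|_A$. The key computation is
\begin{equation*}
 \widetilde T\widetilde T^*=T_0T_0^*+DJ^*JD=T_0T_0^*+D^2=L^2\Id_Y,
\end{equation*}
since $J^*J=\Id_K$ and $D$ maps into $K$. (The adjoint of $z\mapsto DJ^*z$ is $y\mapsto JDy\in E^\perp$, and the cross terms vanish because $E\perp E^\perp$.) Hence $\widetilde T/L$ is a coisometry. Thus $\|\widetilde T\|=L$, and $\Lip(\widetilde T,A)=\Lip(T_0,A)=L$, so $\widetilde T$ is an optimal extension of $T_0|_A$.

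Next I check condition~(1) of Proposition~\ref{prop:linear} for $\widetilde T$. A scalar multiple $L R$ of a coisometry $R$ restricts to $L$ times an isometric isomorphism from $(\ker\widetilde T)^\perp$ onto $Y$. Since $\dim Y\geq2$, the space $(\ker\widetilde T)^\perp$ has dimension at least two. Any two-dimensional subspace $E'\subset(\ker\widetilde T)^\perp$ therefore satisfies $\|\widetilde Te\|=L\|e\|$ for all $e\in E'$, which gives condition~(1) even with $\varepsilon=0$. Concretely, take $E'=\widetilde T^*(W)$ for any two-dimensional $W\subset Y$. Then $\widetilde T\widetilde T^*w=L^2w$, and $\|\widetilde T^*w\|=L\|w\|$, so $\|\widetilde T(\widetilde T^*w)\|=L\|\widetilde T^*w\|$.

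Finally, $A\subset E\subset X$ is closed in $X$ and $\Lip(\widetilde T,A)=L=\|\widetilde T\|$. Condition~(2) of Proposition~\ref{prop:linear} then says that $\widetilde T$ is an AMLE of $\widetilde T|_A=T_0|_A$. We also get the saturation identity $\Lip(\widetilde u,V)=\Lip(\widetilde u,\partial V)=L$ on every nonempty open $V\subset X\setminus A$.

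The only delicate step is the adjoint computation. I have to verify that $(DJ^*P_{E^\perp})^*=JD$ with values in $E^\perp$. I must also check that $D$, the positive square root of the positive operator $L^2\Id_Y-T_0T_0^*$ (positive because $\|T_0\|=L$), is self-adjoint with range in $K$, so that $J^*JD=D$. Everything else is a direct appeal to Proposition~\ref{prop:linear}.
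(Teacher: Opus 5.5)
Your proof is correct and follows the paper's argument. You use the same identity $\widetilde u\widetilde u^*=T_0T_0^*+DJ^*JD=T_0T_0^*+D^2=L^2\Id_Y$, conclude that $\widetilde u/L$ is a coisometry isometric on $(\ker\widetilde u)^\perp$ of dimension $\dim Y\geq2$, and appeal directly to Proposition~\ref{prop:linear}; your explicit plane $\widetilde u^*(W)$ and the adjoint formula $\widetilde u^*y=T_0^*y+JDy$ only spell out details the paper leaves implicit.
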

\begin{proof}
Since $D(Y)\subset K$ and $J^*J=\Id_K$, the displayed extension
satisfies
\begin{equation*}
 \widetilde u\widetilde u^*
 =T_0T_0^*+DJ^*JD=T_0T_0^*+D^2=L^2\Id_Y.
\end{equation*}
Thus $\widetilde u/L$ is a coisometry onto $Y$: it has norm
one and is isometric on $(\ker\widetilde u)^\perp$, which
has dimension $\dim Y\geq2$. Proposition~\ref{prop:linear}
therefore applies.
\end{proof}

\section*{Declaration on the use of artificial intelligence}
Large language models were used extensively in the preparation of
this paper. ChatGPT (OpenAI) and Claude (Anthropic) were used to
explore and develop the arguments, to check the proofs, to draft
and revise the text, to prepare the figures, and to compile and
check bibliographic information. The author posed the problem,
directed the work, verified every definition, statement and proof,
and takes full responsibility for the content.


\begin{thebibliography}{99}

\bibitem{AkopyanTarasov}
A.~V.~Akopyan and A.~S.~Tarasov,
\emph{A constructive proof of Kirszbraun's theorem},
Math. Notes \textbf{84} (2008), no.~5, 725--728.
DOI: \doi{10.1134/S000143460811014X}.

\bibitem{Aronsson}
G.~Aronsson, \emph{Extension of functions satisfying Lipschitz conditions},
Ark. Mat. \textbf{6} (1967), 551--561.
DOI: \doi{10.1007/BF02591928}.

\bibitem{ACJ}
G.~Aronsson, M.~G.~Crandall, and P.~Juutinen,
\emph{A tour of the theory of absolutely minimizing functions},
Bull. Amer. Math. Soc. (N.S.) \textbf{41} (2004), no.~4, 439--505.
DOI: \doi{10.1090/S0273-0979-04-01035-3}.

\bibitem{AzagraLeGruyerMudarra}
D.~Azagra, E.~Le Gruyer, and C.~Mudarra,
\emph{Kirszbraun's theorem via an explicit formula},
Canad. Math. Bull. \textbf{64} (2021), no.~1, 142--153.
DOI: \doi{10.4153/S0008439520000314}.

\bibitem{BarronJensenWang}
E.~N.~Barron, R.~R.~Jensen, and C.~Y.~Wang,
\emph{The Euler equation and absolute minimizers of $L^\infty$
functionals}, Arch. Ration. Mech. Anal. \textbf{157} (2001),
no.~4, 255--283.
DOI: \doi{10.1007/PL00004239}.

\bibitem{Brehm}
U.~Brehm, \emph{Extensions of distance reducing mappings to piecewise
congruent mappings on $\mathbb{R}^{m}$},
J. Geom. \textbf{16} (1981), no.~1, 187--193.
DOI: \doi{10.1007/BF01917587}.

\bibitem{CiosmakContinuity}
K.~J.~Ciosmak, \emph{Continuity of extensions of Lipschitz maps},
Israel J. Math. \textbf{245} (2021), no.~2, 567--588.
DOI: \doi{10.1007/s11856-021-2215-0}.

\bibitem{CiosmakLeaves}
K.~J.~Ciosmak, \emph{Leaves decompositions in Euclidean spaces},
J. Math. Pures Appl. (9) \textbf{154} (2021), 212--244.
DOI: \doi{10.1016/j.matpur.2021.08.003}.

\bibitem{CiosmakMonotone}
K.~J.~Ciosmak, \emph{Continuity of extensions of Lipschitz maps and
of monotone maps}, J. Lond. Math. Soc. (2) \textbf{110} (2024),
no.~5, Paper No.~e70014, 24~pp.
DOI: \doi{10.1112/jlms.70014}.

\bibitem{CiosmakUniform2026}
K.~J.~Ciosmak, \emph{Kirszbraun extensions preserving uniform distance
in Hilbert spaces}, preprint (2026), arXiv:2607.17672.
DOI: \doi{10.48550/arXiv.2607.17672}.

\bibitem{CEG}
M.~G.~Crandall, L.~C.~Evans, and R.~F.~Gariepy,
\emph{Optimal Lipschitz extensions and the infinity Laplacian},
Calc. Var. Partial Differential Equations \textbf{13} (2001),
no.~2, 123--139.
DOI: \doi{10.1007/s005260000065}.

\bibitem{HirnLeGruyer}
M.~J.~Hirn and E.~Y.~Le Gruyer,
\emph{A general theorem of existence of quasi absolutely minimal
Lipschitz extensions}, Math. Ann. \textbf{359} (2014),
nos.~3--4, 595--628.
DOI: \doi{10.1007/s00208-013-1003-5}.

\bibitem{Jensen}
R.~Jensen, \emph{Uniqueness of Lipschitz extensions: minimizing
the sup norm of the gradient}, Arch. Rational Mech. Anal.
\textbf{123} (1993), no.~1, 51--74.
DOI: \doi{10.1007/BF00386368}.

\bibitem{Juutinen}
P.~Juutinen, \emph{Absolutely minimizing Lipschitz extensions on
a metric space}, Ann. Acad. Sci. Fenn. Math. \textbf{27} (2002),
no.~1, 57--67.

\bibitem{Katzourakis}
N.~Katzourakis, \emph{$L^\infty$ variational problems for maps and
the Aronsson PDE system}, J. Differential Equations \textbf{253}
(2012), no.~7, 2123--2139.
DOI: \doi{10.1016/j.jde.2012.05.012}.

\bibitem{KSS}
B.~Kirchheim, E.~Spadaro, and L.~Sz\'ekelyhidi Jr.,
\emph{Equidimensional isometric maps},
Comment. Math. Helv. \textbf{90} (2015), no.~4, 761--798.
DOI: \doi{10.4171/CMH/370}.

\bibitem{Kirszbraun}
M.~D.~Kirszbraun,
\emph{\"Uber die zusammenziehende und Lipschitzsche Transformationen},
Fund. Math. \textbf{22} (1934), 77--108.
DOI: \doi{10.4064/fm-22-1-77-108}.

\bibitem{Milman}
V.~A.~Milman, \emph{Absolutely minimal extensions of functions on
metric spaces}, Sb. Math. \textbf{190} (1999), no.~6, 859--885.
DOI: \doi{10.1070/sm1999v190n06ABEH000409}.

\bibitem{NaorSheffield}
A.~Naor and S.~Sheffield,
\emph{Absolutely minimal Lipschitz extension of tree-valued mappings},
Math. Ann. \textbf{354} (2012), no.~3, 1049--1078.
DOI: \doi{10.1007/s00208-011-0753-1}.

\bibitem{PSSW}
Y.~Peres, O.~Schramm, S.~Sheffield, and D.~B.~Wilson,
\emph{Tug-of-war and the infinity Laplacian},
J. Amer. Math. Soc. \textbf{22} (2009), no.~1, 167--210.
DOI: \doi{10.1090/S0894-0347-08-00606-1}.

\bibitem{SheffieldSmart}
S.~Sheffield and C.~K.~Smart,
\emph{Vector-valued optimal Lipschitz extensions},
Comm. Pure Appl. Math. \textbf{65} (2012), no.~1, 128--154.
DOI: \doi{10.1002/cpa.20391}.

\bibitem{Valentine}
F.~A.~Valentine, \emph{A Lipschitz condition preserving extension
for a vector function}, Amer. J. Math. \textbf{67} (1945),
no.~1, 83--93.
DOI: \doi{10.2307/2371917}.

\end{thebibliography}
\end{document}